\titleformat{\subsubsection}[runin]
  {\normalfont\normalsize\bfseries\centering}
  {\thesubsubsection}{5pt}{}[.]
\title{A space level light bulb theorem in all dimensions}
\author{Danica Kosanovi\'c}
\address{LAGA, Université Sorbonne Paris Nord (Paris 13)}
\curraddr{Department of Mathematics, ETH Z\"urich}
\email{danica.kosanovic@math.ethz.ch}
\author{Peter Teichner}
\address{Max-Planck-Institut f\"ur Mathematik, Bonn}
\email{teichner@mac.com}
\begin{document}

\begin{abstract}
    Given a $d$-dimensional manifold $M$ and a knotted sphere $s\colon\S^{k-1}\hra\partial M$ with $1\leq k\leq d$, for which there exists a framed dual sphere $G\colon\S^{d-k}\hra\partial M$, we show that the space of neat embeddings $\D^k\hra M$ with boundary $s$ can be delooped by the space of neatly embedded $(k-1)$-disks, with a normal vector field, in the $d$-manifold obtained from $M$ by attaching a handle to $G$. This increase in  codimension significantly simplifies the homotopy type of such embedding spaces, and is of interest also in low-dimensional topology.
    In particular, we apply the work of Dax to describe the first interesting homotopy group of these embedding spaces, in degree $d-2k$. In a separate paper we use this to give a complete isotopy classification of 2-disks in a 4-manifold with such a boundary dual.
\end{abstract}

\maketitle

\begin{spacing}{0.5}
  \tableofcontents
\end{spacing}

\section{Introduction and survey of results}\label{sec:intro}

\subsection{A space level light bulb theorem for arbitrary dimensions} \label{subsec-intro:space-LBT}
Fix a $d$-dimensional manifold $M$ and an embedding $s\colon \S^{k-1}\hra \partial M$, and let $\Emb_s(\D^k,M)$ denote the space of neat embeddings of the $k$-disk into $M$ that restrict to $s$ on the boundary. In this paper {all manifolds are smooth, compact, connected, oriented, and have nonempty boundary, and all embeddings are smooth}. A smooth map $K\colon X\to Y$ of manifolds is \emph{neat} if it is transverse to $\partial Y$ and $K^{-1}(\partial Y)=\partial X$.
\begin{equation}\label{setting}
\begin{minipage}{0.9\textwidth}
    For a \emph{setting with a (framed geometric) dual (in the boundary)} we assume that there exists an embedding $G\colon\S^{d-k}\hra\partial M$ with trivialized normal bundle, such that $s\pitchfork G$ is a single positive point, and we fix such a framed sphere $G$.
\end{minipage}
\end{equation}

In this setting, we denote by $M_G$ the $d$-manifold obtained from $M$ by attaching to $\partial M$ a $(d-k+1)$-handle along the framed sphere~$G$. Then $\partial M_G$ is the surgery on $\partial M$ along $G$, and since $s$ intersects $G$, we have $s=u_-\cup_{u_0} u_+$ for $u_0\colon\S^{k-2}\hra \partial M_G$, $u_-\colon\D^{k-1} \hra \partial M_G$, and $u_+\colon\D^{k-1} \hra M_G$ neat, see Figure~\ref{fig:new-fig}.

For  $K \in\Emb_s(\D^k,M)$ denote by $\nu K$ an open tubular neighborhood. Then there is a diffeomorphism $M \sm \nu K \cong M_G$ and hence the right hand side does not depend on $G$ (nor its framing), and the left hand side is independent of $K$. This follows from handle cancellation, see Lemma~\ref{lem:MG}. 

Any triple $(M,s,G)$ as above is obtained by starting with an arbitrary $d$-manifold $X$ and removing a tubular neighborhood of a neat embedding $u\colon\D^{k-1}\hra X$ to obtain $M$. Then $G$ is a meridian sphere to $u$, $X$ is diffeomorphic to $M_G$, and $u$ corresponds to $u_+$. To obtain $s$ we have to assume that $u_0=\partial u$ is also the boundary of an embedding $u_-\colon\D^{k-1}\hra\partial X$ and finally, $\Emb_s(\D^k,M)$ is nonempty if and only if $u$ is isotopic to $u_-$ rel.\ $u_0$. In this case, $M$ is also a boundary connected sum $X \natural (\S^{d-k} \times \D^k)$, with $G$ corresponding to $\S^{d-k}\times \{p\}$ and $K=\{p\} \times \D^k$ the chosen embedding.

\begin{figure}[!htbp]
    \centering
    \includegraphics[width=0.97\linewidth]{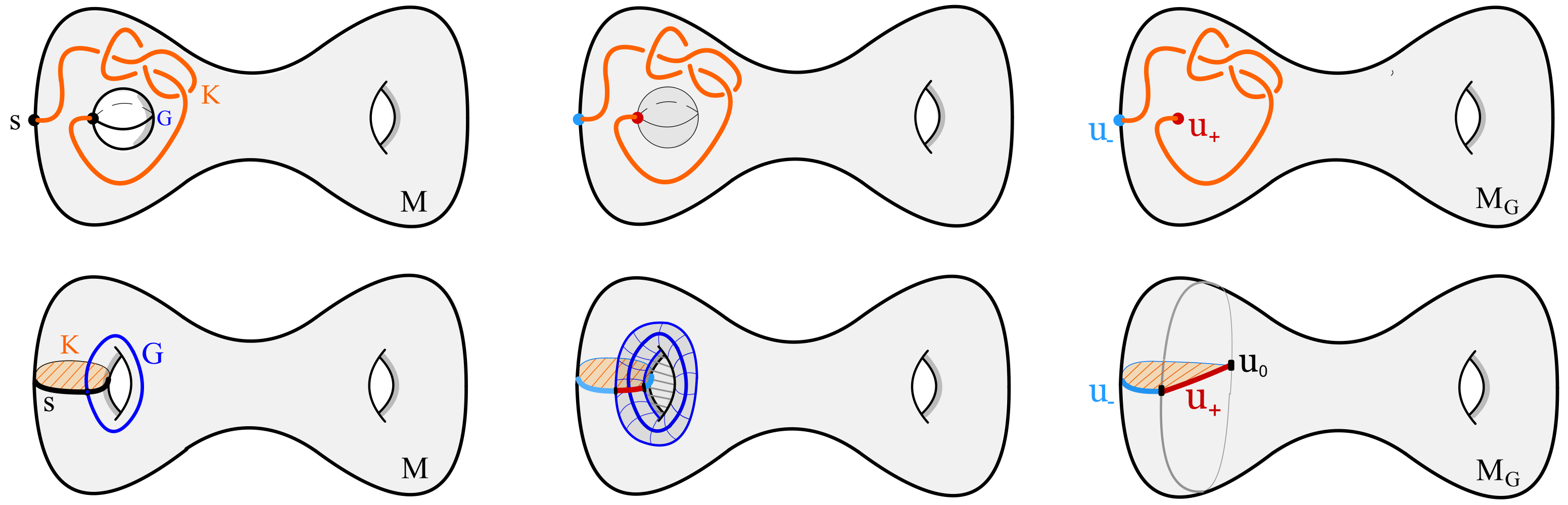}
    \caption{Correspondence of neat $k$-disks in $M$ with boundary $s$ and half-disks in $M_G$  for $k=1,2$, $d=3$.}
    \label{fig:new-fig}
\end{figure}
When $k=1$ and $d=3$ this is precisely the setting of the well-known ``light bulb trick'' in knot theory, see for example \cite[257]{Rolfsen}. Namely, $G$ is the ``light bulb'' to which a ``cable'' $\D^1\hra M$ connects on one end, while the other end is fixed in another component of $\partial M$, the ``ceiling'', see the top left of Figure~\ref{fig:new-fig}. The trick refers to the proof that such cables $K$ are isotopic if and only if they are homotopic, i.e.\ $\pi_0\Emb_s(\D^1,M)=\pi_1M$. Indeed, a homotopy from one knotted $K$ to another can be turned into an isotopy using the swinging motion around the light bulb $G$. See also Example~\ref{ex-intro:classical-LBT}.

Motivated by recent generalizations of the light bulb trick for $2$-spheres in $4$-manifolds \cite{Gabai-spheres,ST-LBT}, we prove the following theorem for disks in any dimensions $1\leq k\leq d$, and in~\cite{KT-4dLBT} use it to recover and generalize those results for $k=2,d=4$ (see also \cite{Gabai-disks} for partial results on $2$-disks).
\begin{maintheorem}\label{thm-intro:LBT-fib-seq}
    In the setting with a dual \eqref{setting}, any $\U\in\Emb_s(\D^k,M)$ leads to a fibration sequence
\[\begin{tikzcd}
    %   \Omega^{k}\S^{d-k}\arrow{r} & 
    \Emb_s(\D^k,M)\arrow{r}{\foliate_\U} & 
    \Omega\Emb_{u_0}(\D^{k-1},M_G)  \arrow{r}{\delta_{\ev_0}} &
    \Omega^{k-1}\S^{d-k}
\end{tikzcd}
\]
    where $\foliate_\U(K)$ is obtained by foliating $\D^k$ by a $1$-parameter family of $\D^{k-1}$ (rel.\ $\S^{k-2}$), applying $-\U$ and then $K$ to it and getting a loop based at $u_+\in\Emb_{u_0}(\D^{k-1},M_G)$.
\end{maintheorem}
Here $\Omega$ denotes a based loop space, and a sequence of based maps $F\overset{i}{\to} E \overset{p}{\to} B$ is a fibration sequence if $i$ factors through a weak homotopy equivalence to a homotopy fiber of $p$. In particular, we get a long exact sequence on homotopy groups.
For more details on $\foliate_\U$ we refer to Remark~\ref{rem:f-U}, and for $\delta_{\ev_0}$ to Remark~\ref{rem-intro:ThmA-from-ThmB}.

In fact, $\Emb_s(\D^k,M)$ is a loop space in general, as we explain next.

Firstly, a result of Cerf shows that this space is weakly homotopy equivalent to its subspace $\Emb_{s^\e}(\D^k,M)$, consisting of those embeddings that agree with a fixed one on an $\e$-collar of the boundary $\partial\D^k$ (see Proposition~\ref{prop:e-collar}). Secondly, if we extend the foliation of $\D^k$ from Theorem~\ref{thm-intro:LBT-fib-seq} to a $1$-parameter family of thickened disks $\D^{k-1}\times[0,\e]$, then $K\in \Emb_{s^\e}(\D^k,M)$ gives a path in the space $\Emb_{u_0^\e}^\e(\D^{k-1},M_G)$ of such \emph{$\e$-augmented $(k-1)$-disks} $\D^{k-1}\times[0,\e]\hra M_G$. We can similarly use $\U$ to complete this to a loop $\foliate_\U^\e(K)$ based at $u_+^\e$. Let us point out that the space $\Emb_{u_0^\e}^\e(\D^{k-1},M_G)$ is homotopy equivalent to $\Emb_{u_0^\e}^\uparrow(\D^{k-1},M_G)$, the space of $(k-1)$-disks equipped with a normal vector field, see Proposition~\ref{prop:normal-v-field}.
\begin{maintheorem}[Space level light bulb theorem for disks]\label{thm-intro:LBT-deloop}
    In the setting with a dual \eqref{setting}, any basepoint $\U\in\Emb_{s^\e}(\D^k,M)$ leads to a pair of inverse homotopy equivalences
    \[\begin{tikzcd}
        \foliate^\e_\U\colon\; \Emb_{s^\e}(\D^k,M) \arrow[shift left=3pt]{r}{}[swap]{\sim}
        &
        \Omega\Emb_{u_0^\e}^\e(\D^{k-1},M_G)\arrow[shift left=3pt]{l}{} \;\colon\amb_\U
      \end{tikzcd}
    \]
    where $\foliate^\e_\U$ is the $\e$-augmented foliation map, and for $n\geq 0$ the value of $\pi_n\amb_\U$ on an $n$-parameter family of isotopies $\S^1\to\Emb_{u_0^\e}^\e(\D^{k-1},M_G)$ is the $n$-parameter family of $k$-disks obtained by applying the parametrized ambient isotopy extension theorem.
\end{maintheorem}

\begin{remark}\label{rem-intro:ThmA-from-ThmB}
    Theorem~\ref{thm-intro:LBT-fib-seq} follows from Theorem~\ref{thm-intro:LBT-deloop} using Proposition~\ref{prop:forg-augm}, which says that forgetting the $\e$-augmentation is a fibration $\ev_0\colon \Emb_{u_0^\e}^\e(\D^{k-1},M_G) \sra \Emb_{u_0^\e}(\D^{k-1},M_G)$, with fiber $\Omega^{k-1}\S^{d-k}$ measuring the normal derivative in the $\e$-direction; then $\delta_{\ev_0}$ in Theorem~\ref{thm-intro:LBT-fib-seq} is the connecting map of this fibration. Here we again use Proposition~\ref{prop:e-collar} to replace $\Emb_{u_0^\e}(\D^{k-1},M_G)$ by the equivalent $\Emb_{u_0}(\D^{k-1},M_G)$.
\end{remark}

%These results could be attributed to Jean Cerf, since the method of proof appeared first in \cite{Cerf-applications}, republished as the appendix of his celebrated paper~\cite{Cerf-diffeos}, and is sometimes called the \emph{Cerf half-disk trick}. 

The main tool in these results, namely the translation to \emph{half-disks}, is due to Jean Cerf and appeared first in \cite{Cerf-applications}, republished as the appendix of his celebrated paper~\cite{Cerf-diffeos}.
Cerf did not discuss the generality in which his method applies: he used it only for $M=\D^k$, see Example~\ref{ex:Cerf} below. Our Theorems~\ref{thm-intro:LBT-fib-seq} and~\ref{thm-intro:LBT-deloop} are general results that use Cerf’s half-disk trick (and his parametrized ambient isotopy theorem) to arrive at interesting consequences for neat disks. Budney and Gabai outlined the case $M=\S^{d-k} \times \D^k$ in \cite[v1:Lem.3.4]{Budney-Gabai} and noted in \cite[v1:Rem.6.2]{Budney-Gabai} that this outline should be generalizable. For a history of related results we refer to Section~\ref{subsec:history}.

Mentioned half-disks are $k$-disks in~$M_G$ that restrict to $u_-^\e$ on the $\e$-collar of one half of the boundary, and to $u_+^\e$ on the $\e$-collar of the other half $\partial_+\D^k$, see the third column of Figure~\ref{fig:new-fig}. This space is homotopy equivalent to $\Emb_{s^\e}(\D^k,M)$, but is also the fiber of a restriction map from the space of half-disks for which $\partial_+\D^k$ is free to move in the interior of $M_G$. The latter space is contractible, so the fiber is the loop space on the base, implying Theorem~\ref{thm-intro:LBT-deloop}. The proof is in Section~\ref{sec:any-dim}, and a detailed outline in Section~\ref{sec:outline}.
\begin{remark}
    One might hope to use Theorem~\ref{thm-intro:LBT-deloop} repeatedly. However, the boundary condition for $(k-1)$-disks, $u_0\colon\S^{k-2}\hra M_G$, is null homotopic (it bounds $u_-
    \subseteq\partial M_G$), so cannot have a geometric dual.
\end{remark}
%In the reverse perspective, the theorem relates the (based loop space of the) space of embeddings $\D^{k-1}\hra X=M_G$, viewed as the core of a handle $h^{k-1}$ in some handle decomposition of $X$, with the space of cores of a cancelling handle $h^k$ (with a fixed attaching region), cf.\ the proof of Lemma~\ref{lem:MG}.
\begin{remark}
    The same method of proof applied to \emph{framed} half-disks yields a homotopy equivalence
\[
    %   \Omega^{k}\S^{d-k}\arrow{r} & 
    \Emb_{s'}^{\mathrm{fr}}(\D^k,M)\simeq
    \Omega\Emb_{u_0'}^{\mathrm{fr}}(\D^{k-1},M_G)
\]
    between spaces of framed embeddings, i.e.\ embeddings equipped with a trivialization of the normal bundle (and framed boundary conditions $s'$ respectively $u_0'$). Equivalently, we can thicken the embeddings to obtain (cf.\ Proposition~\ref{prop:normal-v-field}):
\[
    %   \Omega^{k}\S^{d-k}\arrow{r} & 
    \Emb_{\nu s}(h^{d-k},X)\simeq
    \Omega\Emb_{\nu u_0}(h^{d-k+1},X),
\]
    where $h^i=\D^i\times\D^{d-i}$ is a handle of index $i$ and $\nu s$ and $\nu u_0$ are neighborhoods of the attaching region.
    %In words, the space of handles in any handle decomposition of $X$ with the fixed attaching region $\nu s$ corresponds to the loop space of the space of their cancelling handles with attaching region $\nu u_0$.
\end{remark}

\subsection{Applications of the work of Dax}\label{subsec-intro:Dax}
The essence of Theorem~\ref{thm-intro:LBT-deloop} is that it increases the codimension for the embedding space and thus simplifies the computation of its homotopy groups. 
In particular, using Haefliger's double-point elimination~\cite{Haefliger-plong}, Jean-Pierre Dax~\cite{Dax} computed the homotopy groups $\pi_n\Emb_s(\D^k,M)$ in the ``metastable range'', i.e.\ in degrees $n< 2d-3k-3$, beyond the stable range $n< d-2k-1$ where they are as for immersions (in turn determined by Smale--Hirsch theory).  But now in the setting with a dual, Theorem~\ref{thm-intro:LBT-deloop} implies that the group
    \[\begin{tikzcd}
        \pi_n\foliate^\e_\U\colon\; \pi_n\Emb_{s^\e}(\D^k,M) \arrow[shift left=4pt]{r}{}[swap]{\scriptsize\cong}
        &
        \pi_{n+1}\Emb_{u_0^\e}^\e(\D^{k-1},M_G)\arrow[shift left=4pt]{l} \;\colon\pi_n\amb_\U
      \end{tikzcd}
    \]
can be computed using Dax's techniques for all $n< 2d-3k-1$ (equivalently, $n+1<2d-3(k-1)-3$). For example, when $d-k\leq2$ the range of Dax is empty, $n<1-k$, whereas we have $n<3-k$.

We remark that for $k=d-2$ the Goodwillie--Weiss embedding tower \cite{GKW}, which generalizes the work of Dax, converges for $\Emb_{u_0}(\D^{k-1},M_G)$, but it need not converge for $\Emb_s(\D^k,M)$.
    %  We have
    %  \[ T_\infty\Emb_s(\D^k,M)\to\Omega T_\infty\Emb_{u_0}^\e(\D^{k-1},M) \]

More precisely, Dax expresses the  homotopy group $\pi_n\big(\Imm_\partial(V,X),\Emb_\partial(V,X);u\big)$ of neat immersions, relative to neat embeddings of an $\ell$-manifold $V$ into a $d$-manifold $X$ (the boundary condition $u_0\colon\partial V\hra\partial X$ is omitted from the notation) as a certain bordism group, recalled in Theorem~\ref{thm:Dax}. However, to compute this explicitly requires more work, which to our knowledge has not been done prior to the present paper.
We identify this bordism group and the \emph{Dax invariant} for $n=d-2\ell$ and $V$ simply connected, as an isomorphism
\begin{equation}\label{eq-intro:Dax-invt}
        \Da\colon\;
        \pi_{d-2\ell}\big(\Imm_\partial(V,X),\Emb_\partial(V,X);u\big) \xrightarrow{\cong} \faktor{\Z[\pi_1X]}{\relations_{\ell,d}}
\end{equation}
where the group of relations $\relations_{\ell,d}$ is trivial for $\ell=1$, and given by $\langle g-(-1)^{d-\ell}g^{-1}: g\in\pi_1X\rangle$ for $\ell\geq2$. The map $\Da$ is the signed count of group elements at the double points of an associated generic immersion $\I^{d-2\ell}\times V\imra \I^{d-2\ell}\times X$,
see Theorem~\ref{thm:Dax-invt}. For $d-2\ell=0$ this resembles the result of Grant~\cite{Grant} that compares the invariant of Hatcher and Quinn~\cite{Hatcher-Quinn} (who study whether an immersion is regularly homotopic to an embedding for $2d-3\ell-3\geq0$) to Wall's self-intersection invariant, see Remark~\ref{rem:Grant}.

We then specialize to $V=\D^\ell$ and prove the following (restated as Theorem~\ref{thm:Dax-final}, that includes $d-2\ell=0$).

\begin{maintheorem}\label{thm-intro:Dax}
    Let $X$ be a $d$-manifold and consider the space $\Emb_\partial(\D^\ell,X)$ with a basepoint $u$. Assume $d\geq \ell+3$ and $d-2\ell\geq1$. Then the map $p_u\colon\pi_n(\Emb_\partial(\D^\ell,X),u)\to\pi_{n+\ell}X$, $p_u(f)=(\vec{t}\mapsto -u\cup_\partial f_{\vec{t}})$, is an isomorphism for $0\leq n\leq d-2\ell-2$, and there is a short exact sequence of groups (sets if $d-2\ell-1=0$):
\[\begin{tikzcd}
        \faktor{\Z[\pi_1X\sm1]}{\relations_{\ell,d}\oplus \md_u(\pi_{d-\ell}X)} 
        \arrow[tail,shift left]{r}{\partial\realmap} &
        \pi_{d-2\ell-1}(\Emb_\partial(\D^\ell,X),u)
        \arrow[two heads]{r}{p_u}
        \arrow[dashed,shift left]{l}{\Da} &
        \pi_{d-\ell-1}X.
\end{tikzcd}
\]
\end{maintheorem}
The map $\md_u\colon\pi_{d-\ell}M\to\Z[\pi]$ whose image appears in the kernel of the extension is a restriction of the isomorphism $\Da$ from \eqref{eq-intro:Dax-invt}. The \emph{realization map} $\partial\realmap$ is an explicit inverse of $\Da$, constructed in Section~\ref{subsec:Dax}. The case $\ell=1,d=4$ was also studied by Gabai~\cite{Gabai-disks}, see Remark~\ref{rem:Gabai}. Some properties of these invariants in that case can be found in~\cite{KT-4dLBT}, and for $\ell=1$ and $d\geq3$ in~\cite{K-Dax}.

%\begin{remark}
%    If $\U\colon\D^2\hra M^4$ has a framed geometric dual in the interior, then a concordance from $\U$ to $K\in\Emb_s(\D^2,M)$ is a neatly embedded 3-ball in $M\times\I$ which has a framed dual for its boundary. In a work in progress we define a concordance obstruction $\wt{f}(K)$ using this observation and Theorem~\ref{thm:gen-dim}.
%\end{remark}
%

Next, we extend this to $\e$-augmented $\ell$-disks in Theorem~\ref{thm:Dax-augmented}, using some results on homotopy groups of frame bundles, proven in Appendix~\ref{app:frame-bundles}. In particular, when $d-\ell$ is odd it turns out that 
\begin{equation}\label{eq-intro:Z-augmentations}
    \pi_{d-2\ell-1}(\Emb_{\partial^\e}^\e(\D^\ell,X),u^\e)\cong \Z\times\pi_{d-2\ell-1}(\Emb_\partial(\D^\ell,X),u),
\end{equation}
so any $\S^{d-2\ell-1}$-family of embedded $\ell$-disks in $X$ has $\Z$ many $\e$-augmentations (see Remark~\ref{rem:amb-Exp}).
In Section~\ref{subsec:proof-of-ThmD} we combine the mentioned Theorem~\ref{thm:Dax-augmented} for $\ell=k-1$ with Theorem~\ref{thm-intro:LBT-deloop} as follows.
\begin{maintheorem}\label{thm-intro:combined}
Assume the setting with a dual \eqref{setting}, and let $\pi\coloneqq\pi_1M$.
    Then there are isomorphisms $\pi_n\Emb_s(\D^k,M)\cong\pi_{n+k}M$ for $n\leq d-2k-1$, and if $d-k\neq1,3,7$ a group extension:
\[\begin{tikzcd}[column sep=19pt]
    \faktor{\Z[\pi]}{\relations_{k-1,d}\oplus\md^\e(\pi_{d-k+1}M)} \arrow[tail,shift left]{rr}{\amb_\U\circ\partial\realmap^\e} 
        && 
    \pi_{d-2k}(\Emb_s(\D^k,M),\U) 
    \arrow[two heads]{rrr}{\eta_{W,\U}\oplus (-\U\cup\bull)}
    \arrow[dashed,shift left]{ll}{\Da\circ\foliate^\e_\U}
        &&& 
    \Z_{k,d}\oplus\faktor{\pi_{d-k}M}{\Z[\pi]G}
\end{tikzcd}
\]
    where $\Z_{k,d}\coloneqq\Z$ for $d-k$ even, $\Z_{k,d}\coloneqq\Z/2$ for $d-k$ odd, and $\md^\e(\pi_{d-k+1}M)=\langle1\rangle\oplus\md(\pi_{d-k+1}M)$ for $d-k$ even or $k=2$.
    % and for $d\neq 5,9$ odd:
    % \[\begin{tikzcd}[column sep=small]
    %     \faktor{\Z[\pi]}{\md^\e(\pi_{d-1}M)} \arrow[tail]{rr}{\amb_\U\circ\partial\realmap^\e}
    %     && \pi_{d-4}(\Emb_s(\D^2,M),\U) \arrow[two heads]{rrr}{\eta\oplus(-\U\cup\bull)}
    %     &&& \Z/2 \oplus\faktor{\pi_{d-2}M}{\Z[\pi]\cdot G}
    % \end{tikzcd}
    % \]
    The map on the right is on the class of $K\colon\S^{d-2k}\to \Emb_s(\D^k,M)$ given as follows.
\begin{itemize}
\item 
   The map $-\U\cup \bull$ assigns to $K$ the homotopy class modulo $\Z[\pi]\cdot G$ of the sphere $-\U\cup K\in\Map_*(\S^{d-2k}, \Map_*(\S^k,M))\simeq\Map_*(\S^{d-k},M)$, obtained by gluing the oppositely oriented $\U$ with $K$ along boundaries.
\item 
    If $d-k$ is even and $d-k\neq k$, then $\eta_{W,\U}(K)$ is one half of $e(\nu K,\nu\U)$, the relative Euler number of the normal bundle of the immersion $\I^{d-2k}\times\D^k\imra\I^{d-2k}\times M$, given by $(\vec{t},x)\mapsto (\vec{t},K_{\vec{t}}(x))$, relative to the constant family $\U$ (they agree on $\partial(\I^{d-2k}\times\D^k)$). Moreover, $\Z_{k,d}=\Z$ splits back.
\item 
    If $d-k=k=2$ or $4$ or $8$, then the relative Euler number $e(\nu K,\nu\U)$ might be odd, but we have 
    \[
    \eta_{W,\U}(K)=\frac{1}{2}\big(e(\nu K,\nu\U)-W(-\U\cup K)\big)
    \]
    where the homomorphism $W \in \Hom_\pi(\pi_kM,\Z)$ with $W(G)=0$ is an integral lift of the \emph{spherical} Stiefel--Whitney class $w^s_k\in \Hom_\pi(\pi_kM,\Z/2)$ given by $w^s_k(a)=w_k(a^*(TM))$ for $a\colon\S^k\to M$. 
\end{itemize}
\end{maintheorem}
% See Section~\ref{subsec:proof-of-ThmD} for a proof of this theorem.
We show that such a $W$ always exists in Lemma~\ref{lem:TX-even}.
We use Theorem~\ref{thm-intro:combined} in \cite{KT-4dLBT} to compute the set of path components $\pi_0\Emb_s(\D^2,M)$ for $d=4$, thus classifying in the setting with a dual the set of isotopy classes of disks in a $4$-manifold. Note that this is precisely a case when $2d-3k-3\leq n< 2d-3k-1$.

\begin{remark} \label{rem:cocycle}
    It is an interesting problem to determine the equivalence class of the extension in Theorem~\ref{thm-intro:combined}. For $d-k$ even we can divide out $\Z=\Z_{k,d}$ that splits off and pick a set theoretic section $\sigma$ of the quotient extension. If $\star$ is the group structure on $\pi_{d-2k}\Emb_s(\D^k,M)\cong \pi_{d-2k+1}\Emb_{u_0^\e}^\e(\D^{k-1},M_G)$, then for $a_i\in\pi_{d-k}M_G\cong\pi_{d-k}M/\Z[\pi]G$ the element $\sigma(a_1) \star \sigma(a_2)\star \sigma(a_1+a_2)^{-1}$ is in the kernel of $-\U\cup\bull$, and on this $\Da\circ\foliate^\e_\U$ inverts $\amb_\U\circ\partial\realmap^\e$. Thus, the group 2-cocycle is given by 
\[
    (a_1,a_2)\mapsto \Da(\sigma(a_1) \star \sigma(a_2)\star \sigma(a_1+a_2)^{-1}) \in \faktor{\Z[\pi_1M\sm1]}{\relations_{k-1,d}\oplus\md (\pi_{d-k+1}M)}.
\]
    We plan to study it in future work.
    Note that $\star$ is the usual group structure for $d-2k\geq1$, whereas for $d=2k$ it is an unexpected group structure on the set $\pi_0\Emb_s(\D^k,M)$. For $k=2,d=4$ we compute the commutators of this group in \cite{KT-4dLBT} and show that it is usually nonabelian.
\end{remark}

\begin{remark}\label{rem:arcs-with-duals}
    If in the setting with a dual we moreover have $d\geq k+3$ and $d-2k\geq1$, then we can apply Theorem~\ref{thm-intro:Dax} directly to obtain a description of $\pi_{d-2k-1}(\Emb_s(\D^k,M),\U)$ as an extension of $\pi_{d-k-1}M$ by a quotient of $\Z[\pi_1X\sm1]$. However, Theorem~\ref{thm-intro:combined} says that this quotient is trivial, and one can see why explicitly: we compute $\md_\U(g\cdot[G])=g$ in Example~\ref{ex:computing-md}.
    % When $\partial u$ has a geometric dual $G\colon\S^{d-1}\hra\partial X$ (so $u$ has endpoints in different boundary components of $X$, one of which is a sphere), the homomorphism $\delta_{\Imm}$ is surjective. 
    % Indeed, by the space level light bulb theorem (see Theorem~\ref{thm-intro:LBT-deloop} and Example~\ref{ex-intro:LBT-arcs}) we have $\Emb_\partial(\D^1,X)\overset{\simeq}{\ra}\Omega\S(X_G)$ for $X_G\coloneqq X\cup_G h^d$, so the inclusion $\incl$ into $\Imm_\partial(\D^1,X)\simeq\Omega\S(X)$ is an isomorphism on $\pi_{d-3}$ (whose inverse is induced by the inclusion $X\hra X_G$).
    % 
    % Let us note that under the map $\pi_{d-1}\S^{d-1}\hra\pi_{d-1}\S(X_G)\cong\pi_{d-2}(\Emb_\partial(\D^1,X),u)$ the generator is sent to the family $u_{tw}^G$ of embedded arcs obtained from the interior twist family $\tau\in\pi_{d-2}(\Imm_\partial(\D^1,X),u)$ (from the proof of Proposition~\ref{prop:r(1)}) by tubing into $G$. See Section~\ref{sec:2-in-d} and Figure~\ref{fig:ambExp(1)}.
\end{remark}

%%%%%%%%%%%%%%%%
\subsection*{Acknowledgments} 
Both authors cordially thank the Max Planck Institute for Mathematics in Bonn. The first author was supported by the Fondation Sciences Math\'ematiques de Paris and ETH Z\"urich. We thank the anonymous referee for pointing out additional references.

%%%%%%%%%%%%%%%%%%%%%%%%%%%%%%%%
\section{Preliminaries}\label{sec:prelim}

\subsection{Outline}\label{sec:outline}
We outline the contents of the paper and the ideas of proofs of Theorems~\ref{thm-intro:LBT-deloop} and ~\ref{thm-intro:combined}.

\subsubsection{Light bulb tricks}
The first step in the proof of Theorem~\ref{thm-intro:LBT-deloop} is illustrated in Figure~\ref{fig:new-fig}: attaching a handle to $G$ transforms a neat $k$-disk in $M$ into a ``half-disk'' $\HD^k\hra M_G$, namely, a $k$-disk whose boundary $\partial\HD^k=\S^{k-1}=\D_-^{k-1}\cup\D_+^{k-1}$ has one half $u_-=s|_{\D_-}$ embedded in $\partial M_G$ and the other $u_+=s|_{\D_+}$ neatly embedded in $M_G$, with $\partial u_-=\partial u_+$. Half-disks are not smooth embeddings in the classical sense, but in the sense of manifolds with corners, which we review in Section~\ref{subsec:manifolds-with-corners}. 
Moreover, this correspondence gives a homotopy equivalence
    \begin{equation}\label{eq:bijection}
        \Emb_{s^\e}(\D^k,M) \simeq \Emb_{s^\e}(\HD^k,M_G),
    \end{equation}
since removing a tubular neighborhood $\nu u_+$ turns half-disks back into neat disks; we first ensure that half-disks intersect $\nu u_+$ only along the collar $u_+^\e$. This is done in Section~\ref{subsec:neat-to-half}.

\begin{lemma}\label{lem:MG}
    The existence of a disk $\U\colon\D^k\hra M$ ensures that the diffeomorphism type of $M_G$ is independent of the choice of a framing $\psi$ of $G$, and in fact, of the choice of a dual $G$ to $s$ all together. Moreover, there is a diffeomorphism $M_G\cong M\sm\nu\U$.
\end{lemma}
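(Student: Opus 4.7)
The plan is to identify $M_G$ with $M\sm\nu\U$ by a smooth handle cancellation; the required independence of $\psi$ and of the dual sphere $G$ is then immediate, since the right-hand side refers to neither. First I would fix a closed tubular neighborhood $\nu\U\cong\D^k\times\D^{d-k}$, arranged so that $\nu\U\cap\partial M=\nu s\cong\S^{k-1}\times\D^{d-k}$; neatness of $\U$ makes this possible. The remaining face $\D^k\times\S^{d-k-1}$ of $\partial\nu\U$ lies in the interior of $M$, so it is along this face that $\nu\U$ reattaches to $M\sm\nu\U$. Rewriting $\nu\U=\D^{d-k}\times\D^k$ exhibits this reattachment as a $(d-k)$-handle $h^{d-k}_\U$ on $M\sm\nu\U$, and a direct check shows that its belt sphere in the intermediate boundary $\partial M$ is precisely $\partial\U=s$.

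Next I would note that with this interpretation
\[
    M_G\;=\;(M\sm\nu\U)\cup h^{d-k}_\U\cup h^{d-k+1}_G,
\]
so the adjacent handles $h^{d-k}_\U$ and $h^{d-k+1}_G$ sit in cancellation position: the belt sphere $s$ of the first and the attaching sphere $G$ of the second both live in $\partial M$ and, by hypothesis~\eqref{setting}, meet transversely in a single point. Invoking the smooth $(i,i{+}1)$-cancellation lemma (e.g.\ Milnor's lectures on the $h$-cobordism theorem) would then produce the desired diffeomorphism $M_G\cong M\sm\nu\U$.

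The step I expect to require the most care is the framing $\psi$: a priori, different choices of $\psi$ yield genuinely different handles $h^{d-k+1}_G$, and in general varying the framing changes the diffeomorphism type of a handle attachment. Here, however, the cancellation procedure absorbs the framing. Once $G$ has been isotoped to meet $s$ transversely in one point, the tubular neighborhood of $G$ can be slid across the belt of $h^{d-k}_\U$ into standard product form, and this slide is free to twist the framing by any element of $\pi_{d-k}O(k-1)$. Every framing therefore produces a manifold diffeomorphic to the framing-free object $M\sm\nu\U$, and the same argument applied to a second choice of framed dual sphere $G'$ delivers the full independence statement of the lemma.
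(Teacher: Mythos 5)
Your proposal is correct and follows essentially the same route as the paper: view $M$ as $(M\sm\nu\U)\cup h^{d-k}$ with cocore $\U$ and belt sphere $s$, then cancel this handle against the $(d-k+1)$-handle attached along $G$, which meets $s$ transversely in one point, so that $M_G\cong M\sm\nu\U$ independently of $G$ and $\psi$. The paper's proof is just a terser version of this, invoking the standard cancellation lemma (which indeed holds for any framing of the upper handle, as you argue).
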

\begin{proof}
    First note that $M$ is obtained from the complement $M \sm \nu \U$ of an open tubular neighborhood of $\U$ by attaching a $(d-k)$-handle with cocore $\U$. As $G$ is dual to $\partial \U$ and has trivial normal bundle, the standard handle cancellation of this $(d-k)$- and $(d-k+1)$-handle pair gives a diffeomorphism
    \[
    M_G\cong M\cup_{(G,\psi)}h^{d-k+1} \cong \big(M \sm \nu \U\big)\cup h^{d-k}\cup h^{d-k+1} \cong M \sm \nu \U.\qedhere
    \]
\end{proof}
The second ingredient for Theorem~\ref{thm-intro:LBT-deloop} is Cerf--Palais ``family version'' of ambient isotopy extension Theorem~\ref{thm:Cerf-all}, saying that a map restricting embeddings to a fixed submanifold is a locally trivial fibration. This is used in Section~\ref{subsec:half-disks} as follows. Consider the space $\Emb_{\D_-^\e}(\HD^k,M_G;\U)$ of half-disks that agree with our basepoint $\U$ only on the collar $\D_-^\e$ of $\D_-\subseteq\partial\HD^k$, while $\D_+$ moves freely and neatly in $M_G$. Then $\Emb_{s^\e}(\HD^k,M_G)=\Emb_{\partial\HD^k}(\HD^k,M_G;\U)$ is by definition the fiber over $u_+^\e$ of the restriction map $\ev_{\D^\e_+}$.
This gives rise to a fibration sequence
\[\begin{tikzcd}
        \Omega\Emb_{u_0}^\e(\D^{k-1},M_G) \arrow[dashed]{r}{\amb_\U} &
        \Emb_{s^\e}(\HD^k,M_G) \arrow{r} &
        \Emb_{\D_-^\e}(\HD^k,M_G;\U)\arrow{r}{\ev_{\D^\e_+}} &
        \Emb_{u_0}^\e(\D^{k-1},M_G).
      \end{tikzcd}
\]
in which the total space $\Emb_{\D_-^\e}(\HD^k,M_G;\U)$ is contractible: in this space half-disks are allowed to shrink arbitrarily close to their $\D_-^\e$-collar, where they are fixed. This implies that the \emph{connecting map $\amb_\U$ is a homotopy equivalence} with an explicit inverse $\foliate^\e_\U$ (given in general by Lemma~\ref{lem:Hurewicz}); see also Example~\ref{ex:maps-3d}.
\begin{remark}\label{rem:f-U}
    One can try to define the map $\foliate_\U\colon \Emb_s(\D^k,M)\to\Omega\Emb_{u_0}(\D^{k-1},M_G)$ from Theorem~\ref{thm-intro:LBT-fib-seq} directly as follows. Consider the foliation of $K\in\Emb_s(\D^k,M)$ by $K(\bull,t)\colon\D^{k-1}\hra M\subseteq M_G$, using a parametrization $\D^k\cong\D^{k-1}\times\I$; this is a path from $u_-$ to $u_+$, so to get a closed loop use the inverse of such a foliation of $\U$. However, $u_-$ is not neat so does not lie in the space $\Emb_{u_0}(\D^{k-1},M_G)$. One way around this would be to enlarge this space to also include embeddings that lie in $\partial M_G$; after all, they are limits of neat embeddings and we believe the homotopy type of the space does not change.
    
    We opted for a second way, making $\Emb_s(\D^k,M)$ smaller by considering its  homotopy equivalent (by Cerf's Proposition~\ref{prop:e-collar}) subspace $\Emb_{s^\e}(\D^k,M)$.
    Then the mentioned explicit inverse $\foliate^\e_\U$ induces the following $f_\U\coloneqq\ev_0\circ\foliate_\U^\e$: pick a neat $\wt{u}_-\colon\D^{k-1} \hra \U(\S^{k-1} \times [1-\e,1])$ that is close to $u_-$, and define $\foliate_\U(K)=\foliate(-\U)\cdot\foliate(K)$ as the path $\U(\bull,-t)$ from $u_+$ to $\wt{u}_-$, followed by $K(\bull,t)$ from $\wt{u}_-$ back to $u_+$.
\end{remark}
\begin{example}\label{ex:maps-3d}
    For $k=1,d=3$ think of $\gamma\in\Omega\Emb^\e_\emptyset(\D^0,M_G)$ as an isotopy of intervals $\D^0\times[0,\e]$. Extend this to an ambient isotopy $\Phi_t$ of $M_G$, so diffeomorphism $\Phi_1$ takes $u_+^\e$ to itself. Then $\amb_\U(\gamma)\coloneqq\Phi_1(\U)\in\Emb_{s^\e}(\HD^1,M_G)$ is the half-arc obtained by dragging the endpoint of $\U$ along $\gamma$, as in Figure~\ref{fig:amb-iso-3D}.
    The homotopy inverse $\foliate^\e_\U$ of $\amb_\U$ glues the given embedded arc $K$ to $\U$ along their boundaries and foliates the resulting loop $-\U\cup K$ by small intervals $\D^0\times[0,\e]$, to obtain $\foliate^\e_\U(K)\in\Omega\Emb^\e_\emptyset(\D^0,M_G)$.
% \begin{remark}[LBT for knots]\label{rem:lbt-knots}
%     If $K\colon\S^1\hra\S^1\times\S^2$ is freely homotopic to $S\colon\S^1\times\{y\}\subseteq\S^1\times\S^2$ and has $G\coloneqq \{e\}\times\S^2$ as a geometric dual, then $K$ isotopic to $S$. This can be lifted to spaces as well: cutting $\S^1\times\S^2$ open along $G$ turns $S$ into $\U$, and gives a homotopy equivalence $\Emb_\partial(\D^1,\D^1\times\S^2)
%     \simeq\{K:K\pitchfork G= \{K(e_1)=G(e_2)\}\}
%     \subseteq\Emb(\S^1,\S^1\times\S^2)$, see Section~\ref{subsec:spheres}.
% \end{remark}
\end{example}
\begin{figure}[!htbp]
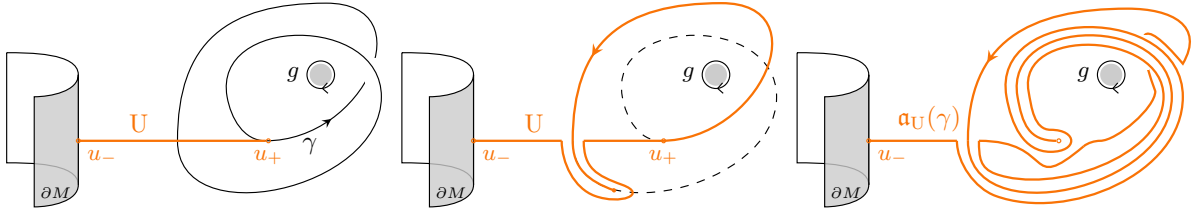

        \centering
        \includestandalone[width=0.9\linewidth,mode=buildmissing]{Figures-highdLBT/fig-amb-3d}
        \caption{Moments $t=0$, $0<t<1$, $t=1$ of an ambient isotopy $\Phi_t$ defining $\amb_\U(\gamma)$. A neighborhood of the intersection $\U\cap\gamma$ has to be ``dragged along'' all the way during the isotopy, whereas a neighborhood of the undercrossing in $\gamma$ is dragged along only for a while.}
        \label{fig:amb-iso-3D}
\end{figure}

\subsubsection{The Dax invariant}
In Section~\ref{sec:disks} we discuss the work of Dax and prove Theorem~\ref{thm-intro:Dax}. The goal is to compute the homotopy groups $\pi_n(\Emb_{u_0}(\D^{k-1},M_G),u_+)$ for $n\leq d-2k$, but we will work in a general setting, with no restrictions on $M_G$ and $u_0$. Instead consider an arbitrary $d$-manifold $X$ (with adjectives as in the first paragraph of the paper), and write $\Emb_\partial(\D^\ell,X)$ and $\Imm_\partial(\D^\ell,X)$ for the spaces of neatly embedded and immersed disks with boundary $u_0\colon\S^{\ell-1}\hra \partial X$ and based at an arbitrary $u\colon\D^\ell\hra X$.

\begin{remark}\label{rem:Smale}
    There is a homotopy equivalence $\deriv(\U)^{-1}\cdot\deriv(\bull)\colon\Imm_{\partial^\e}(\D^1,X)\to\Omega V_1(X)$, where $V_1(X)$ is the unit sphere subbundle of the tangent bundle of $X$, due to Smale~\cite{Smale}. This sends $K\colon\D^1\imra X$ to the concatenation of the inverse of the path $\deriv(\U)$ followed by the path $\deriv(K)$, where $\deriv(\bull)$ is the unit derivative of an immersed arc, see also Section~\ref{subsec:immersions}. Interestingly, the proof idea is essentially the same Cerf's trick outlined above, and the key ingredient is to show that the restriction map for immersions is also a fibration. Note that $\Omega V_1(X)\simeq \Omega\S^{d-1}\times \Omega X$, see Lemma~\ref{lem:V_1}.
\end{remark}

Firstly, we will see that $\pi_n(\Emb_\partial(\D^\ell,X),u)\cong\pi_n(\Imm_\partial(\D^\ell,X),u)\cong\pi_{n+\ell}X$ for every $n\leq d-2\ell-2$ and that the inclusion induces a surjection
\[
    \pi_{d-2\ell-1}(\Emb_\partial(\D^\ell,X),u)\sra\pi_{d-2\ell-1}(\Imm_\partial(\D^\ell,X),u)\cong\pi_{d-\ell-1}X.
\]
We will compute its kernel in Section~\ref{subsec:immersions-conn-map}: it is the quotient of the relative homotopy group, computed in Section~\ref{subsec:Dax} as
\[
    \Da\colon\pi_{d-2\ell}(\Imm_\partial(\D^\ell,X),\Emb_\partial(\D^\ell,X),u)\overset{\cong}{\ra} \faktor{\Z[\pi_1X]}{\relations_{\ell,d}}
\]
by the image of $\pi_{d-2\ell}(\Imm_\partial(\D^\ell,X),u)\cong\Z/\relations_{\ell,d}\oplus\pi_{d-\ell}X$, as computed in Proposition~\ref{prop:r(1)}. 
Recall from \eqref{eq-intro:Dax-invt} that $\relations_{\ell,d}\coloneqq\langle g-(-1)^{d-\ell}g^{-1}: g\in\pi_1X\rangle$ except that $\relations_{1,d}\coloneqq0$, and that the Dax invariant $\Da([H])$ of the relative homotopy class of $H\colon (\I^{d-2\ell},\partial\I^{d-2\ell})\to (\Imm_\partial(\D^\ell,X),\Emb_\partial(\D^\ell,X))$ is the signed count of double point loops of the track $\wt{H}\colon\I^{d-2\ell}\times\D^\ell\to \I^{d-2\ell}\times X$. Note that when $\ell=1$ we can use the natural orientation of $\D^1$ to order the two sheets intersecting at a double point, and hence lend in $\Z[\pi_1X]$. However, for $\ell\geq2$ we have to mod out the ambiguous choice of sheets: the double point loop gets reversed and the sign changes exactly if the reflection on $\R^{d-\ell}\times\R^{d-\ell}$ changes the orientation.

In Theorem~\ref{thm:realization-map} (in Section~\ref{subsec:Dax}) we describe the inverse $\realmap$ of $\Da$, schematically depicted in Figure~\ref{fig:outline}. For $g\in\pi_1X$ the class $\realmap(g)$ is represented by the $(d-2\ell-1)$-family of embedded disks that swing around a meridian sphere $\S^{d-\ell-1}$ to $u$ at a point $x$, together with a path through immersed disks from this to the trivial family using the meridian ball at $x$. In fact, discussion of Section~\ref{subsec:Dax} applies to any simply connected manifold $V$ in place of $\D^l$. 

\begin{figure}[!htbp]
    \centering
    \includestandalone[width=0.45\linewidth,mode=buildmissing]{Figures-highdLBT/fig-outline}
    \caption{Samples $\realmap(g)_t\in\Emb_\partial(\D^\ell,X)$ for several $t\in \S^{1}$ and $\ell=1,d=4$.}
    \label{fig:outline}
\end{figure}
In Section~\ref{subsec:ribbons} we show an analogous result for $\e$-augmented arcs, see Theorem~\ref{thm:Dax-augmented}. The proof uses our study of the augmentation map from Section~\ref{subsec:forg-augm} and of frame bundles from Appendix~\ref{app:frame-bundles}. This almost immediately proves Theorem~\ref{thm-intro:combined}: it just remains to see that $\pi_{d-k}M_G$ is the quotient of $\pi_{d-k}M$ by the $\Z[\pi_1M]$-multiples of $G$, and to determine the splitting $\eta_{W,\U}$ as an Euler number, see Section~\ref{subsec:proof-of-ThmD}.

\subsection{Embeddings of manifolds with corners}\label{subsec:manifolds-with-corners}
Following Cerf \cite{Cerf-plongements} a $d$-dimensional \emph{manifold with corners} is a topological $d$-manifold $X$ with boundary, together with (a maximal atlas of) charts around all $x\in X$ with domain $\R^d_{(q)}\coloneqq \R^q\times[0,\infty)^{d-q}$ for some $0\leq q\leq d$ that send the origin $\vec{0}$ to $x$. One requires that each transition map, initially defined only on an open subset of $\R^d_{(q)}$, extends to a \emph{smooth} map on an open subset of $\R^d$. This gives a notion of smooth maps $Y\to X$ between manifolds with corners and a Whitney topology on the set $C^\infty(Y,X)$.

A component of $X_{(q)}\coloneqq \{x\in X\mid x \leftrightarrow \Vec{0}\in\R^d_{(q)} \text{ in some chart}\}$ is called a $q$-\emph{face} of $X$; it is a $q$-dimensional smooth manifold (without boundary), and the set $X$ is a disjoint union of its faces. The \emph{boundary} $\partial X$ is the union of codimension~$\geq 1$ faces, and a \emph{corner} of $X$ is a face of codimension~$\geq 2$ (e.g.\ one of the vertices in a square). If $q\in\{d-1,d\}$ we recover the usual structure on $X$ of a smooth manifold with boundary.
The role of a collar of $\partial X$ in $X$ is in general played by Cerf's ``prismatic neighbourhoods'' (that restrict to collars of faces). Manifolds with corners are clearly closed under cartesian product.
% a tangent bundle $TX$ as a $d$-dimensional vector bundle that comes equipped with $q$-dimensional subbundles along the $q$-faces.

For a $d$-manifold with corners $X$ a subset $X'\subseteq X$ is a $d'$-dimensional \emph{submanifold} for some $0\leq d'\leq d$ if each point of $X'$ admits a chart in $X$ for which $X'$ maps bijectively to some \emph{submodel} $\R^{d'}_{(q')}\subseteq\R^d_{(q)}$. A submodel is given by choosing $0\leq q'\leq q$, $0\leq k\leq\min\{q-q', d'-q'\}$, and inserting $d-d'$ many zeroes:
\[
    \R^{d'}_{(q')}\owns (x_1,\dots,x_{q'}, y_1, \dots, y_{d'-q'}) \mapsto (x_1,\dots,x_{q'}, y_1,\dots, y_k, 0, \dots, 0, y_{k+1}, \dots, y_{d'-q'})\in \R^d_{(q)}.
\]
% $(\R^{q'}\times[0,\infty)^{k}\times\{0\}^{q-(q'+k)})\times([0,\infty)^{d'-q'-k}\times\{0\}^{d-q-(d'-q'-k)})\subseteq\R^d_{(q)}$, for $0\leq k\leq d'-q'$.
% Submanifolds have suitable tubular neighborhoods by \cite[264]{Cerf-plongements}.
These relative charts induce the structure of a $d'$-manifold with corners on $X'$. 
For example, if $X$ is a smooth manifold with boundary, then for the corners of $X'$ of codimension $d'-q'\leq 2$, we have the following different cases, depending on the value of $q'$ listed on the left:
\begin{enumerate}[leftmargin=3em]
    \item[$d'$:]
        For a top dimensional face $F\subseteq X'$ we have either $F\subseteq 
        \partial X$ or $F\subseteq X_{(d)}=X \sm \partial X$.
    \item[$d'{-}1$:]
        For a small neighborhood $V_p\subseteq X'$ of $p\in X'_{(d'-1)}$ there are 3 possibilities: either $V_p\subseteq\partial X$ respectively $V_p\subseteq X \sm \partial X$ as above, or $(V_p, \partial V_p)\subseteq (X,\partial X)$ is a neat submanifold.
    \item[$d'{-}2$:]
        A small neighborhood $V_p\subseteq X'$ of $p\in X'_{(d'-2)}$ looks like a neighborhood of $\Vec{0}\in\R^{d'-2}\times~[0,\infty)^2$, and there are 4 possibilities for $V_p\subseteq X$. The case $(x_1,\dots,x_{q'},y_1,y_2)\mapsto (x_1,\dots,x_{q'},y_1,0,\dots,0,y_2)$ is the most interesting: exactly ``one half'' of $\partial V_p$, the one corresponding to $\vec{0}\times[0,\infty)\times\{0\}$, lies in $\partial X$ while the rest of $V_p$ lies in $X\sm \partial X$.
\end{enumerate}
Thus, a smooth submanifold with boundary is either neat or contained in the interior of $X$, while the simplest next case is the local model listed last above, which we use for \emph{half-disks} $X'=\HD^k$, see Figure~\ref{fig:setup-half-disk}.
% If $Y$ and $X$ have $n$ respectively $m$ faces then there are a priori $2^{m\cdot n}$ possible incidence relations for an embedding as above. Even though they are not independent, it's quite a strong restriction to fix one such relation (via the basepoint $y$). 

A smooth map $f$ of manifolds with corners is an \emph{embedding}, written $f\colon Y \hra X$, if $f(Y)$ is a submanifold of $X$ whose induced corner structure makes $f\colon Y\to f(Y)$ a diffeomorphism. 
An \emph{immersion} of manifolds with corners is a smooth map that is locally an embedding. Spaces of embeddings and immersions inherit the Whitney $C^\infty$-topology. 
\begin{defn}
For $y\colon Y\hra X$ let $\Emb(Y,X;y)\subseteq\Emb(Y,X)$ consist of embeddings $f\colon Y\hra X$ such that for each $p\in Y$ and each face $F_X\subseteq X$ we have $f(p)\in F_X$ if and only if $y(p)\in F_X$; we say that $y$ and $f$ have the same ``incidence relations'' \cite[281]{Cerf-plongements}. 

Furthermore, for a closed subset $Y'\subseteq Y$, let
\[
    \Emb_{Y'}(Y,X;y)\subseteq\Emb(Y,X;y)
\]
consist of those embeddings $f$ that agree with $y$ on $Y'$, that is $f|_{Y'}=y|_{Y'}$. We say that ``$y$ is the boundary condition along $Y'$'' (note that $y$ at the same time determines the incidence relations).
\end{defn}

%%%%%%%%%%%%%%%%
\subsubsection{Restriction maps for embeddings}\label{subsec:Cerf-Palais}
Consider compact manifolds with corners and embeddings
    \[\begin{tikzcd}
      Z'\arrow[tail]{r} & Z\arrow[tail]{r}\arrow[tail, bend left]{rr}[near start]{z} & Y\arrow[tail]{r}{y} & X.
    \end{tikzcd}
    \]
We say a subset $Y'\subseteq Y$ is a \emph{local normal tube to $Z\subseteq Y$ along $Z'$} if $Y'\cap Z=Z'$ and there is a tubular neighborhood $V\subseteq Y$ of $Z$ in $Y$ such that $Y'\cap V=pr^{-1}(Z')$, where $pr\colon V\to Z$ is the projection.

\begin{theorem}\label{thm:Cerf-all}
  With the above notation, the following restriction maps are both locally trivial:
 \begin{enumerate}[label={\Roman*.},ref={\thetheorem.\Roman*}]
    \item
    $\;\ev_Z\colon\Emb(Y,X;y)\ra\Emb(Z,X;z)\quad$  \cite[p.294 Cor.2, with notation $E\subseteq H\subseteq F$]{Cerf-plongements};
    \label{thm:Cerf-I}
    \item
    $\;\ev_Z\colon\Emb_{Y'}(Y,X;y)\ra\Emb_{Z'}(Z,X;z)\quad$ \cite[p.298 Cor.2]{Cerf-plongements}.
    \label{thm:Cerf-II}
\end{enumerate}
\end{theorem}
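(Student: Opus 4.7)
The plan is to follow the classical Palais--Cerf strategy for proving that restriction maps between embedding spaces are locally trivial fibrations, adapted to the corner setting. The key reduction is to the existence of a continuous local section: given $z_0 \in \Emb(Z,X;z)$, I want to find a neighborhood $U \ni z_0$ and a continuous map $U \to \Diff(X)$, $z' \mapsto \Phi_{z'}$, with $\Phi_{z_0}=\id_X$, such that $\Phi_{z'}\circ z_0 = z'$ and (for part II) $\Phi_{z'}$ fixes pointwise a neighborhood of $Y'\cap Z$ in $X$. Once such a section exists, the local trivialization of $\ev_Z$ over $U$ is $(z', f) \mapsto \Phi_{z'} \circ f$, with inverse $f \mapsto (\ev_Z(f), \Phi_{\ev_Z(f)}^{-1}\circ f)$; both are continuous in the Whitney topology, which gives local triviality.

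To construct $\Phi_{z'}$ for $z'$ close to $z_0$, first I would choose a tubular neighborhood $\nu\colon E \to X$ of $z_0(Z)\subseteq X$ compatible with the corner stratification: this means that for every face $F \subseteq X$, the tube is the inverse image of $F \cap z_0(Z)$, and the projection $\nu \to z_0(Z)$ is smooth across corners. Such a tube exists by the standard corner tubular neighborhood theorem, provided one picks a ``boundary-respecting'' Riemannian metric (i.e., one for which each face is totally geodesic near lower-strata faces). For $z'$ close enough to $z_0$, the embedding $z'\circ z_0^{-1}\colon z_0(Z) \to X$ is a section of $\nu$ close to the zero section; extend it radially in the fibers of $\nu$, cut it off outside a smaller tube using a bump function, and then integrate the resulting time-one displacement field to a diffeomorphism $\Phi_{z'}$. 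Continuity in $z'$ is automatic because all the constructions (radial extension, cutoff, ODE flow) depend smoothly on the input in the Whitney topology.

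For the boundary-condition version II, the extra hypothesis is that $Y'$ is a local normal tube to $Z$ along $Z'=Y'\cap Z$. This gives a tubular neighborhood $V$ of $Z$ in $Y$ in which $Y'$ looks like $pr^{-1}(Z')$. After composing with $y$ I can push this forward to a tubular neighborhood of $z_0(Z)$ in $X$ whose restriction over $z_0(Z')$ lies inside $y(Y')$. For $z'\in\Emb_{Z'}(Z,X;z)$ (which agrees with $z$ on $Z'$) the displacement field produced above already vanishes along $z_0(Z')$; by choosing the cutoff function to depend only on the radial coordinate in the tube and to equal $1$ near $z_0(Z')$, one can arrange that $\Phi_{z'}$ is the identity on a full neighborhood of $z_0(Z')$ in $y(Y')$, and in particular fixes $y|_{Y'}$. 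Then $\Phi_{z'}\circ f$ still lies in $\Emb_{Y'}(Y,X;y)$.

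The hardest part is the corner bookkeeping in step two: constructing a tubular neighborhood of $Z$ in $X$ that simultaneously respects the corner stratification of $X$, the incidence conditions specified by $y$ (so that deformations preserve which face each point of $Z$ lies in), and the prismatic structure near corners where several faces meet. Concretely, one must choose at each face $F$ of $Z$ a normal bundle that is tangent to the corresponding face of $X$ containing $y(F)$, and show these local choices patch together smoothly across the corner strata. Once the stratified tubular neighborhood and a compatible collar system are in hand, the vector-field integration and cutoff arguments are routine, and both statements I and II follow uniformly; this is essentially the content of \cite[pp.~294, 298]{Cerf-plongements}.
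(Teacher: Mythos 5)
The paper does not prove Theorem~\ref{thm:Cerf-all} at all: it is quoted from Cerf (with Palais credited for the closed case), so there is no in-paper argument to compare against. Your outline is the standard Palais--Cerf local-section strategy that those cited proofs implement — produce, for $z'$ near $z_0$, an ambient diffeomorphism $\Phi_{z'}$ depending continuously on $z'$ with $\Phi_{z_0}=\Id_X$ and $\Phi_{z'}\circ z_0=z'$, and trivialize $\ev_Z$ by $(z',f)\mapsto\Phi_{z'}\circ f$ — so at the level of strategy it matches the references.

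Two caveats. First, your stated requirement for part II, that $\Phi_{z'}$ fix pointwise a neighborhood of $Y'\cap Z$ \emph{in $X$}, is inconsistent with $\Phi_{z'}\circ z_0=z'$: an embedding $z'\in\Emb_{Z'}(Z,X;z)$ is only constrained to agree with $z_0$ on $Z'$ and may differ from it arbitrarily close to $Z'$, so no diffeomorphism fixing an ambient neighborhood of $z_0(Z')$ can carry $z_0$ to $z'$. What is actually needed is that $\Phi_{z'}$ fix the set $y(Y')$ pointwise, and this is exactly where the local-normal-tube hypothesis enters: since the displacement of $z'$ relative to $z_0$ vanishes on $Z'$, its fiberwise-constant extension over a tubular neighborhood of $z_0(Z)$ vanishes on the fibers over $z_0(Z')$, hence on $y(Y')$ inside the tube, and after shrinking the support (compactness ensures the support meets $y(Y')$ only near $z_0(Z')$) the rest of $y(Y')$ is untouched. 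Your later sentence gets close to this, but the inference ``identity on a neighborhood of $z_0(Z')$ in $y(Y')$, in particular fixes $y|_{Y'}$'' is only valid once that support-shrinking remark is made explicit. Second, the genuinely hard content — a tubular-neighborhood/collar theorem for manifolds with corners compatible with the prismatic structure and with the incidence relations (so that $\Phi_{z'}$ preserves each face of $X$ and composition preserves incidence), together with continuity of the whole construction in the Whitney topology — is precisely what you defer back to Cerf in your last sentence. So as a self-contained proof the proposal is incomplete exactly where the theorem is nontrivial; as an outline of the cited proof it is accurate.
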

Here a map $p\colon E\to B$ is \emph{locally trivial} if for each $b\in B$ there exists a neighborhood $x\in V\subseteq B$ and a homeomorphism $p^{-1}(V)\cong V\times p^{-1}(x)$. Palais~\cite{Palais} showed \ref{thm:Cerf-I} in the case when all manifolds have empty boundary and $Y$ is compact; Cerf extended this to manifolds with corners and quite general boundary conditions as in \ref{thm:Cerf-II}. We also record the following fact (see \cite[p.331 Cor.3,p.337~Prop.9]{Cerf-plongements}), which will be used to replace neat embeddings by those fixed on a collar.
\begin{prop}\label{prop:e-collar}
    % The inclusion $\Emb_{s^\e}(D,X)\hra\Emb_s(D,X)$ is a weak homotopy equivalence for compact smooth manifolds $D$ and $X$ with boundary.
    In the same notation as above, if $Z'=Y'\cap Z$ is the closure of a codimension $1$ face, then the inclusion $\Emb_{Z\cup Y'}(Y,X;y)\hra\Emb_{Z}(Y,X;y)$ is a weak homotopy equivalence.
\end{prop}
% For proofs both used the following criterion.
% Let $G$ be a topological group and $B$ a space with a continuous $G$-action. We say that a $G$-space $B$ \emph{admits a local $G$-section at $b\in B$} if the map $G\to B$, given by $g\mapsto g.b$, has a section near $b$, that is, there is a neighborhood $b\in V\subseteq B$ and a map $s\colon V\to G$ so that $s(x).b=x$ for all $x\in V$.
% \begin{lemma}[Cerf--Palais Fibration Criterion {\cite[Thm.A]{Palais}, \cite[p.240 Lem.2]{Cerf-plongements}}]\label{Cerf-Palais-criterion}
%      If $f\colon E\to B$ is a $G$-equivariant map and $B$ admits local $G$-sections at all points, then $f$ is a locally trivial map.
% \end{lemma}
% To prove Theorem~\ref{thm:Cerf-all} one uses the natural action on spaces of embeddings (by postcomposition) of the group of diffeomorphisms $G=\Diff(X)$, or its subgroup $G=\Diff_{z(Z')}(X)$ of those that are the identity on $z(Z')$. The evaluation maps are clearly $G$-equivariant and the crucial step is to show the following.
% \begin{theorem}[Parametrized Isotopy Extension {\cite[p.293,Thm.5]{Cerf-plongements}}]
% \label{thm:Cerf-sections}
%   The space of embeddings \\
%   $\Emb_{Z'}(Z,X;z)$ admits local $\Diff_{z(Z')}(X)$-sections at all points.
% \end{theorem}

It is a standard fact that a locally trivial map over a paracompact base is a Hurewicz fibration. One can show that $\Emb_{Y'}(Y,X;y)$ are metrizable, so they are paracompact by Stone's theorem, and Theorem~\ref{thm:Cerf-all} implies that \emph{the restriction maps $\ev_Z$ are Hurewicz fibrations}. To show metrizability one can argue as follows: embed $X$ into some $\R^n$ so that $\Emb_{Y'}(Y,X;y)$ is a subspace of $C^\infty(Y,\R^n)$; the latter is metrizable by classical functional analysis, using that sup-norms on compact sets define a family of seminorms such that $d(f,g)= \sup_n \frac{\sup_x |(f-g)^{(n)}(x)| }{ 2^n (1+\sup_x |(f-g)^{(n)}(x)|) }$ is a metric. In Appendix~\ref{app:Hurewicz} we discuss some properties of fibrations and their connecting maps, needed in the proof of Theorem~\ref{thm:anyD}. See also Remark~\ref{rem:amb-directly}.

\subsection{History}\label{subsec:history}
Let us list some results and examples that precede Theorems~\ref{thm-intro:LBT-fib-seq} and~\ref{thm-intro:LBT-deloop}, as well as the method of proof, and which can be recovered from them as special cases.

In \cite{Cerf-plongements} Cerf laid foundations of differential topology, discussing in details manifolds with corners, submanifolds, and spaces of embeddings. In \cite{Cerf-applications} he surveyed these results and presented the half-disk trick, proving that $\Diff_\partial(\D^d)\simeq\Omega\Emb_\partial(\D^{d-1},\D^d)$ and that homotopy groups of $\Emb_\partial(\D^{d-1},\D^d)$ inject into (the shifted) homotopy groups of  $\Emb_\partial^\e(\D^{d-2},\D^d)$. In particular, $\Diff_\partial(\D^2)$ is contractible, reproving a theorem of Smale. This was later republished as the appendix of \cite{Cerf-diffeos}.

Gramain~\cite{Gramain} used Cerf's trick to extend this to a proof that the components of $\Diff_\partial(S)$ are contractible for most compact surfaces $S$, reproving theorems of Earle, Eells, and Schatz, by completely topological methods. Hatcher gave an exposition of this proof in \cite[App.B]{Hatcher-exposition}. He also used it in \cite[App.]{Hatcher-SmaleConj} when describing equivalent forms of the Smale conjecture.

Goodwillie in his thesis~\cite[7-8]{Goodwillie-thesis} used half-disks to outline why the based loop space of the space of concordance embeddings of $\D^{k-1}$ into $X$ is homotopy equivalent to the space of concordance embeddings of $\D^k$ into $X\cup h^{d-k} (=M)$.

Budney and Gabai~\cite{Budney-Gabai} used half-disks and global coordinates to show statements equivalent to the case $M=\S^{d-k}\times\D^k$ of our Theorems~\ref{thm-intro:LBT-fib-seq} and~\ref{thm-intro:LBT-deloop}, and remarked that this can be used to recover Gabai's 4-dimensional light bulb theorem, see \cite[v1:Rem.6.2]{Budney-Gabai}.

Knudsen and Kupers~\cite[Sec.6.2.4]{Knudsen-Kupers} consider an example related both to the classical light bulb trick and Cerf's half-disk trick.

Finally, let us also mention that Litherland~\cite{Litherland} proved a light bulb theorem for $\S^2$'s in $\S^2\times\S^{d-2}$.

\begin{example}\label{ex-intro:classical-LBT}
    For $k=1$ and $d=3$ Theorem~\ref{thm-intro:LBT-fib-seq} indeed recovers the light bulb trick $\pi_0\Emb_s(\D^1,M)\cong\pi_1M$, since $\Emb_{\emptyset}(\D^0,M_G)=M_G$ and $\pi_1\Omega^0\S^2=0$.
    For example, we have $\Emb_s(\D^1,\D^1\times\S^2)\simeq\Omega\S^2$ for the inclusion $s\colon\partial\D^1\times\{pt\}\hra\partial\D^1\times\S^2$ (since $(\D^1\times\S^2)_G\cong\D^3$), so all arcs are isotopic; however, note that $\pi_1\Omega\S^1=\Z$, so there is a nontrivial loop of arcs (given by the swing around the dual, see Section~\ref{subsec:d=3}). In fact, we more generally have $\Emb_s(\D^1,M)\simeq\Omega\S^2\times\Omega M_G$, see Example~\ref{ex-intro:LBT-arcs}.
\end{example}

\begin{example}\label{ex-intro:LBT-arcs}
    For $k=1$ and $d\geq 2$ the fibration splits by Lemma~\ref{lem:V_1}, so we get a homotopy equivalence
    \[
\Emb_s(\D^1,M)\overset{\simeq}{\ra}\Omega \S^{d-1} \times \Omega M_G=\Omega \S^{d-1} \times \Omega(M \cup_{\S^{d-1}} \D^d).
    \]
    Here a dual for the boundary condition means that the two points $s(\S^0)$ lie in distinct components of $\partial M$, one of which is diffeomorphic to $\S^{d-1}$, and along which a $d$-handle is attached, giving $M_G\coloneqq M \cup_{\S^{d-1}} \D^d$. 
    
    For $\dim(M)=d=2$ it follows that the group $\Z \times \pi_1(M \cup_{\S^1} \D^2)$ acts simply and transitively on the set $\pi_0\Emb_s(\D^1,M)$ of connected components, that are all contractible. 
    The action of $\Z$ is by Dehn twists around one boundary component, while $\pi_1(M \cup_{\S^1} \D^2)$ acts by the ``point-push'' map, well known in the surface community, see~\cite{Farb}. For $d\geq3$ we get such an action of $\pi_1(M)$ on $\pi_0\Emb_s(\D^1,M)$. For all $d\geq 4$ the connected components have nontrivial higher homotopy groups.
\end{example}

\begin{example}
    If $1\leq k=d-1$ then $\S^{d-k}=\S^1$, so for $n>0$ (and $n=0$ if $d\geq4$) Theorem~\ref{thm-intro:LBT-fib-seq} gives:
    \[
        \pi_{n}\Emb_s(\D^{d-1},M)\cong\pi_{n+1}\Emb_{u_0}(\D^{d-2},M\cup_{\S^1}h^2).
    \]
    In particular, this holds for $M=\S^1\times\D^{d-1}$, $s=p\times\partial\D^{d-1}$, $G=S^1\times q$, and $M_G=M\cup_{\S^1}h^2=\D^d$.
    % \[
    %     \pi_{n}\Emb_s(\D^{d-1},\S^1\times\D^{d-1})\cong\pi_{n+1}\Emb_{u_0}(\D^{d-2},\D^d),
    % \]
    % for all $n\geq 0$.
    This case was discussed by Budney and Gabai \cite{Budney-Gabai}, who constructed for $d=4$ and $n=0$ an infinitely generated subgroup of this group.
\end{example}

\begin{example}\label{ex:Cerf}
% true also for k=1, but needs a bit of thinking: since \Omega^0\S^0 has two components... But we should just not deloop once more in this case (just think Emb^e). Note: M' is a 1-manifold with boundary, so union of D^1 containing G(1), and some other components. Then \Omega M_G=\Omega M' but based at G(1) so this is \Omega D^1 so contractible...
    For $1\leq k=d$ any embedding $(\D^d,\partial\D^d)\hra (M,\partial M)$ is a diffeomorphism, as we assumed for simplicity that $M$ is connected. However, Theorem~\ref{thm-intro:LBT-fib-seq} also applies to the disjoint union $M=\D^d\sqcup M'$ with any $d$-manifold $M'$, for $s=\partial\D^d$ and $G\colon\S^0 \hra \partial M$ satisfying $G(-1)\in\partial \D^d$, $G(1)\in\partial M'$. Since $\S^{d-k}=\S^0$ and $M_G\coloneqq(\D^d\sqcup M')\cup_{\nu G} h^1 \cong M'$ we obtain
    \[
        \Diff_\partial(\D^d) \cong \Emb_s(\D^d,\D^d\sqcup M')\simeq \Omega\Emb_{u_0}(\D^{d-1},M').
    \]
    For $M'=\D^d$ this is the result of Cerf \cite[App.]{Cerf-diffeos} that motivated our entire approach.
\end{example}

%%%%%%%%%%%%%%%%%%%%%
\section{Spaces of disks and half-disks}\label{sec:any-dim}
In this section we work in arbitrary dimensions and prove Theorem~\ref{thm-intro:LBT-deloop}.
Disks in $M$ with duals in $\partial M$ are reduced to half-disks in $X=M_G$ in Section~\ref{subsec:neat-to-half}, while half-disks in $X$ are described as loops in $X$ of ``$\e$-augmented'' disks of lower dimension in Section~\ref{subsec:half-disks}. We first fix notation for the setting of half-disks.

%%%%%%%%%%%%%%%%%%%%%
  Given an embedding $\U\colon D\hra X$ of compact manifolds with corners, and a closed subset $b\subseteq D$, recall from Section~\ref{subsec:manifolds-with-corners} the space $\Emb_b(D,X;\U)$ consisting of those embeddings $K\colon D\hra X$ that have the same incidence relations (for faces in $D, X$) as $\U$, and that agree with $\U$ on $b$. We let $\U$ be its basepoint. 
  
  In particular, if $D$ and $X$ are manifolds with boundary, the incidence relation $\U(D)\cap\partial X=\U(\partial D)$ together with the boundary condition $s\coloneqq\U|_b$ on $b=\partial D$ reproduces our space
  \[
  \Emb_s(D,X)=\Emb_{\partial D}(D,X;\U)
  \]
  of neat embeddings (see the first paragraph of the paper). For such $\U$ we can expand the boundary condition to a closed collar $b\coloneqq (\partial D)\times[0,\e]\subseteq D$ and define
  \[
  \Emb_{s^\e}(D,X)\coloneqq \Emb_{(\partial D) \times[0,\e]}(D,X;\U).
  \]
  By Proposition~\ref{prop:e-collar} the inclusion $\Emb_{s^\e}(D,X)\hra\Emb_s(D,X)$ is a weak homotopy equivalence.

\begin{wrapfigure}[13]{r}{0.36\textwidth}
    \centering
    \vspace{5pt}
    \includegraphics[width=0.52\linewidth]{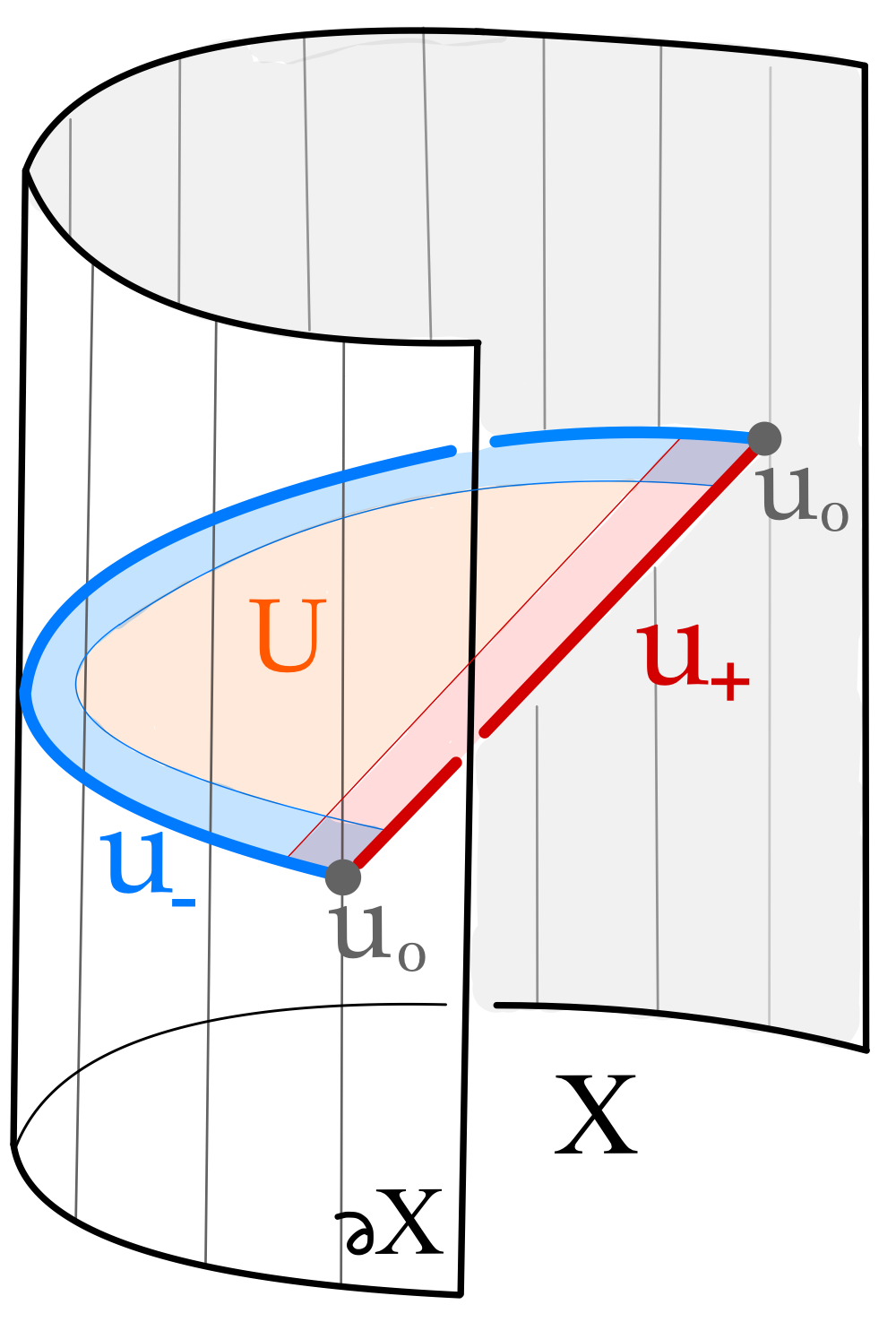}
    \caption{A half-disk for $k=2$, $d=3$.}
    \label{fig:setup-half-disk}
\end{wrapfigure}
We will need the the next simplest case of manifolds with corners, that also have codimension~2 faces. Namely, we now take the domain $D$ to be $\HD^k\coloneqq\{x\in\R^k:||x||\leq 1, x_1\leq 0\}$, that is the west half of the unit $k$-dimensional disk, and consider subsets $\D_-\coloneqq\{x\in \HD^k:||x||=1\}$ and $\D_+\coloneqq\{x\in \HD^k:x_1=0\}$, that are $(k-1)$-dimensional disks with $\partial\HD^k=\D_-\cup\D_+$ and $\S_0\coloneqq\D_-\cap\D_+$, which is a $(k-2)$-dimensional sphere. Then $\HD^k$ is a $k$-manifold with corners with one $k$-face $\mathring{\HD}^k$, two $(k-1)$-faces $\mathring{\D}_\pm$ and one $(k-2)$-face $\S_0$, the unique corner of $\HD^k$.

Moreover, consider subsets $\D_-^\e\coloneqq\{x\in\HD^k : ||x||\geq 1-\e\}$ and $\D_+^\e\coloneqq\{x\in \HD^k:x_1\geq -\e \}$ (shaded strips in Figure~\ref{fig:setup-half-disk}), both diffeomorphic to $\D^{k-1}\times[0,1]$ and with $\D_-^\e\cap\D_+^\e\cong\S_0\times[0,\e]^2$. Denote $\partial^\e\HD^k\coloneqq\D_-^\e\cup\D_+^\e$ (an example of Cerf's prismatic collar).

Next, we fix a smooth manifold with boundary $X$ and an embedding $\U\colon\HD^k\hra X$ of manifolds with corners such that $\U$ maps $\mathring{\HD}^k$ to the interior of $X$ and the other incidence relations are determined by the restrictions of $\U$ to $\D_{\pm}$ as follows (see Figure~\ref{fig:setup-half-disk}): the image of $u_-\coloneqq\U|_{\D_-}$ is contained in $\partial X$, while $u_+\coloneqq\U|_{\D_+}\colon\D_+ \hra X$ is a neat embedding, with $u_0\coloneqq \partial(u_-)=\partial(u_+)\colon\S_0\hra\partial X$.

Similarly, we write $u_\pm^\e\coloneqq \U|_{\D_\pm^\e}$ and $u_0^\e\coloneqq \U|_{\D_-^\e\cap \D_+^\e}$, and identify their domains with the corresponding products. Let us also still write $s\coloneqq \U|_{\partial\D}$ and $s^\e\coloneqq \U|_{\partial\D^\e}$

The elements of Cerf's space $\Emb(\HD^k,X;\U)$ are called \emph{half-disks} in $X$. We are interested in its subspace $\Emb_{s^\e}(\HD^k,X)\coloneqq\Emb_{\partial^\e\HD^k}(\HD^k,X;\U)$ that by definition consists of those half-disks $K\colon\HD^k\hra X$ that agree with $\U$ on the prismatic collar $\partial^\e\HD^k$. Equivalently, $K$ is a topological embedding that agrees with $\U$ on $\partial^\e\HD^k$ and restricts to an (ordinary) smooth embedding on interiors $\mathring{\HD}^k\hra X\sm \partial X$.

We saw in Proposition~\ref{prop:e-collar} that the space of neat disks $\Emb_{u_0}(\D^{k-1},X)=\Emb_{\S_0}(\D_+,X;u_+)$ has a weakly equivalent subspace $\Emb_{u_0^\e}(\D^{k-1},X)=\Emb_{\S_0\times[0,\e]}(\D_+,X;u_+)$ of disks fixed on a collar $\S_0\times[0,\e]\subseteq\D_+$. We will also need a space where each such disk is augmented with ``push-offs'', namely
\[
  \Emb_{u_0^\e}^\e(\D^{k-1},X)\coloneqq\Emb_{\D_-^\e\cap\D_+^\e}(\D_+^\e,X;u_+^\e).
\]
A point here is a topological embedding $K\colon\D^{k-1} \times [0,\e] \hra X$ that restricts to $u_0^\e$ on $(\S_0 \times [0,\e]) \times [0,\e]$ and to an (ordinary) smooth embedding $\mathring{\D}^{k-1} \times [0,\e]\hra X\sm \partial X$; we call $K$ an \emph{``$\e$-augmented'' $(k-1)$-disk}. 

\subsection{From half-disks to loops of \texorpdfstring{$\e$}{e}-augmented disks}\label{subsec:half-disks}
Our embedding spaces are always equipped with basepoints according to the convention above; in the next theorem these are respectively $u_+^\e$ and $\U$. A proof of this result was essentially given in \cite[App.]{Cerf-diffeos}, but we also identify the maps involved, needed for future geometric applications, as in \cite{KT-4dLBT}.
\begin{theorem}\label{thm:anyD}
    For all $k\geq1$ and $d\geq1$ there are inverse homotopy equivalences
    \begin{equation}
      \begin{tikzcd}[column sep=large]
        \amb_\U\colon\Omega \Emb_{u_0^\e}^\e(\D^{k-1},X) 
        \arrow[shift left=3pt]{r}{}[swap]{\sim}
        &
        \Emb_{s^\e}(\HD^k,X)\colon \foliate^\e_\U
        \arrow[shift left=3pt]{l}{}
      \end{tikzcd}
    \end{equation}
    where $\amb_\U$ is given on homotopy groups by the family ambient isotopy theorem, while $\foliate^\e_\U$ maps a half-disk $K$ to the loop of $\e$-augmented $(k-1)$-disks induced by appropriate foliation of the sphere $-\U\cup K$.
    % where $\amb$ maps a loop $\beta$ of $\e$-augmented disks based at $u_+^\e$, to the half-disk obtained from $\U$ by an isotopy extending $\beta$, while $\foliate^\e$ maps a half-disk $K$ to the foliation of the sphere $K\cup_\partial-\U$ by $\e$-augmented disks.
\end{theorem}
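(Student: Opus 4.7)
My plan is to exhibit $\Emb_{s^\e}(\HD^k,X)$ as the fiber of a restriction map whose total space is contractible, and then use the connecting map in the resulting fibration sequence as $\amb_\U$. This is Cerf's strategy from the appendix of \cite{Cerf-diffeos}, adapted to the manifold-with-corners setting developed in Section~\ref{subsec:manifolds-with-corners}.

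First, I would introduce the intermediate space
\[
    \mathcal{E}_\U \coloneqq \Emb_{\D_-^\e}(\HD^k,X;\U),
\]
of half-disks which agree with $\U$ only on the prismatic collar $\D_-^\e$ of $\D_-$, while the $\D_+$-side is free (but still a neat embedding of $\D_+$ into $X$, as encoded in the incidence relations of $\U$). By Theorem~\ref{thm:Cerf-II} applied with $Z=\D_+^\e$, $Y=\HD^k$ and $Y'=\D_-^\e$, the restriction
\[
    \ev_{\D_+^\e}\colon \mathcal{E}_\U \longrightarrow \Emb_{u_0^\e}^\e(\D^{k-1},X)
\]
is locally trivial, hence a Hurewicz fibration over a paracompact base (Section~\ref{subsec:Cerf-Palais}). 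By the definitions of $\Emb_{s^\e}(\HD^k,X)=\Emb_{\partial^\e\HD^k}(\HD^k,X;\U)$, the fiber over the basepoint $u_+^\e$ is exactly $\Emb_{s^\e}(\HD^k,X)$, yielding a fibration sequence
\[
    \Emb_{s^\e}(\HD^k,X) \hookrightarrow \mathcal{E}_\U \xrightarrow{\ev_{\D_+^\e}} \Emb_{u_0^\e}^\e(\D^{k-1},X).
\]

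Second, I would argue that $\mathcal{E}_\U$ is contractible by shrinking each half-disk toward its fixed $\D_-^\e$-collar. Parametrize a prismatic neighborhood of $\D_-$ so that $\D_-^\e \cong \S^{k-1}\times[1-\e,1]$, and construct a smooth $1$-parameter family of self-embeddings $\phi_t\colon \HD^k \hookrightarrow \HD^k$, for $t\in[0,1]$, such that $\phi_1=\mathrm{id}$, each $\phi_t$ fixes $\D_-^\e$ pointwise and respects the corner/incidence data of $\HD^k$, and $\phi_0(\HD^k)\subseteq \D_-^\e$. Post-composition $K\mapsto K\circ\phi_t$ defines a continuous deformation of $\mathcal{E}_\U$ onto the single element obtained by extending $\U|_{\D_-^\e}$ via $\phi_0$ — which of course equals $\U$ itself at the endpoint of a constant family. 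Hence $\mathcal{E}_\U \simeq *$, and Lemma~\ref{lem:Hurewicz} identifies its fiber $\Emb_{s^\e}(\HD^k,X)$ with $\Omega \Emb_{u_0^\e}^\e(\D^{k-1},X)$ via the connecting map, and provides an explicit homotopy inverse to it.

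Third, I would identify the two maps geometrically. Path-lifting in the fibration $\ev_{\D_+^\e}$ is realized by the family ambient isotopy extension theorem: a loop $\gamma\in \Omega \Emb_{u_0^\e}^\e(\D^{k-1},X)$ lifts to a path $\Phi_t$ of ambient diffeomorphisms of $X$ starting at $\mathrm{id}$ and taking $u_+^\e$ along $\gamma$; then $\amb_\U(\gamma)\coloneqq \Phi_1\circ\U$ is a half-disk in the fiber, giving the description of $\amb_\U$ in the statement. Conversely, given $K\in\Emb_{s^\e}(\HD^k,X)$, the concatenation of $K$ with the oppositely oriented $\U$ along their common boundary piece $u_+^\e$ is a based ``sphere'' $-\U\cup K$ that can be foliated by a $1$-parameter family of $\e$-augmented $(k-1)$-disks based at $u_+^\e$, producing the loop $\foliate^\e_\U(K)$. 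That $\foliate^\e_\U$ is a homotopy inverse of $\amb_\U$ is exactly the explicit inverse furnished by Lemma~\ref{lem:Hurewicz} applied to this fibration.

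The main obstacle is constructing the compressions $\phi_t$: they must depend smoothly on $t$, fix $\D_-^\e$ pointwise, preserve the incidence relations at the corner $\S_0=\D_-\cap \D_+$, and give continuous dependence $K\mapsto K\circ\phi_t$ in the Whitney topology on $\mathcal{E}_\U$. The delicate point is matching prismatic collars of both $\D_-^\e$ and $\D_+^\e$ near $\S_0$, so that $\phi_t$ produces an embedding of manifolds with corners rather than merely a topological one. Once this is in place — following the construction in the appendix of \cite{Cerf-diffeos}, adapted via the chart model of Section~\ref{subsec:manifolds-with-corners} — contractibility is immediate and the rest of the proof is formal.
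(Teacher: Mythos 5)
Your overall strategy is exactly the paper's: the same fibration $\Emb_{s^\e}(\HD^k,X)\hookrightarrow\Emb_{\D_-^\e}(\HD^k,X;\U)\xrightarrow{\ev_{\D_+^\e}}\Emb_{u_0^\e}^\e(\D^{k-1},X)$ via Theorem~\ref{thm:Cerf-II}, contractibility of the total space, and Lemma~\ref{lem:Hurewicz} to identify $\amb_\U$ and its explicit inverse. However, there is a genuine gap at the contractibility step. The family $\phi_t$ you postulate cannot exist: an injective self-map $\phi_0\colon\HD^k\hookrightarrow\HD^k$ that fixes $\D_-^\e$ pointwise has $\phi_0(\D_-^\e)=\D_-^\e$, so no point of $\HD^k\sm\D_-^\e$ can also map into $\D_-^\e$; hence the conditions ``$\phi_t|_{\D_-^\e}=\mathrm{id}$'' and ``$\phi_0(\HD^k)\subseteq\D_-^\e$'' are contradictory. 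The obstruction is not the corner-smoothness issue you flag at the end, but this injectivity clash; and it cannot be repaired by precomposition alone, since for $K\circ\phi_t$ to satisfy the boundary condition on $\D_-^\e$ for \emph{every} $K$ in the space one is forced back to $\phi_t|_{\D_-^\e}=\mathrm{id}$, while the endpoint of a contraction must be independent of $K$, forcing $\phi_0(\HD^k)\subseteq\D_-^\e$.

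The paper resolves this by relaxing the pointwise condition to the smaller collar $\D_-^{\e/2}$ (so $K\circ\varphi_t$ a priori only lies in the larger space $E^{\e/2}$), requiring instead $\varphi_t(\D_-^\e)\subseteq\D_-^\e$, and then correcting by an ambient isotopy $\Phi_t$ of $X$ supported in a collar of $\partial X$ with $\Phi_1=\Id_X$ and $\U\circ\varphi_t|_{\D_-^\e}=\Phi_t^{-1}\circ\U|_{\D_-^\e}$; the contraction is $R_t(K)=\Phi_t\circ K\circ\varphi_t$, which does agree with $\U$ on $\D_-^\e$, together with a final reparametrized segment on $t\in[0,\e]$ joining $\Phi_\e\U\varphi_\e$ back to $\U$ so that the contraction is based at $\U$ as Lemma~\ref{lem:Hurewicz} requires (your ``endpoint equals $\U$'' remark glosses over this, since $K\circ\phi_0=\U\circ\phi_0\neq\U$). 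Finally, the identification of the homotopy inverse should come from this contraction: Lemma~\ref{lem:Hurewicz} gives $\foliate^\e_\U(K)(t)=\ev_{\D_+^\e}(R_t(K))=\Phi_t\circ K\circ\varphi_t|_{\D_+^\e}$ (with the $\U$-segment for small $t$), and it is only after writing it this way that one recognizes it as the asserted foliation of $-\U\cup K$; asserting the foliation description directly, without tying it to the contraction, leaves that identification unproved.
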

\begin{proof}
Consider the fibration sequence
\[\begin{tikzcd}
    \Emb_{s^\e}(\HD^k,X)\coloneqq\ev_{\D^\e_+}^{-1}(u_+^\e)\rar & %   \arrow[tail]{d}
    \Emb_{\D_-^\e}(\HD^k,X;\U)\rar{\ev_{\D^\e_+}} &    %   \arrow[equals]{d}
    \Emb_{u_0^\e}^\e(\D^{k-1},X)  %   \arrow[]{d}{\ev_0}
    %   \\ \Emb_{u_-^\e}^{u_+}(\HD^k,X)\arrow{r} & \Emb_{u_-^\e}(\HD^k,X)\arrow{r}{\ev_+} & \Emb_{u_0}(\D_+^{k-1},X)
\end{tikzcd}
\]
where $\ev_{\D^\e_+}$ restricts $K\colon\HD^k\hra X$ to the $\e$-collar $\D_+^\e\subseteq\HD^k$ of the unconstrained half of its boundary. This is a fibration by Cerf's Theorem~\ref{thm:Cerf-II} for $Y'\coloneqq\D_-^\e\subseteq\HD^k\eqqcolon Y$ and $Z'\coloneqq\D_-^\e\cap\D_+^\e\subseteq\D_+^\e\eqqcolon Z$. 
%   $\ev_Z\colon\Emb_{Y'}(Y,X;y)\to\Emb_{Z'}(Z,X;z)$
We will show that its total space $E^\e\coloneqq\Emb_{\D_-^\e}(\HD^k,X;U)$ admits an explicit contraction
\begin{equation}\label{eq:contraction}
    R\colon E^\e \times [0,1] \ra E^\e,\;
    \text{ with } R_0 =\const_\U,\;
    R_1=\Id.
\end{equation}
Then Lemma~\ref{lem:Hurewicz} implies that any connecting map
  \[
    \amb_\U\colon \Omega\Emb_{u_0^\e}^\e(\D^{k-1},X)\ra\Emb_{s^\e}(\HD^k,X)
  \]
is a homotopy equivalence, and is by definition is given by lifting the loop in the base space to a path in the total space and taking the endpoint. In our setting this amounts to extending a loop $\beta$ of $\e$-augmented $(k-1)$-disks based at $u_+^\e$ to an isotopy of half-disks starting with $\U$ and ending with the desired half-disk~$\amb_\U(\beta)$. By the same lemma, the restriction $\foliate^\e_\U(K)(t)= \ev_{\D^\e_+}\circ R_t(K)=R_t(K)|_{\D_+^\e}$ is a homotopy inverse to $\amb_\U$, and we will see below that it is indeed given by a foliation, cf.\ Section~\ref{subsec-intro:space-LBT}.

To construct the retraction $R$ we start with a path of re-embeddings $\varphi_t\colon \HD^k \hra \HD^k$, $t\in [\e,1]$, such that
\begin{enumerate}
    \item $\varphi_1 = \Id_\HD$ and $\varphi_t|_{\D_-^{\e/2}} = \Id_{\D_-^{\e/2}}$ for all $t$,
    \item $\varphi_\e(\HD^k)\subseteq\D_-^\e$,
    \item $\varphi_t(\D_-^{\e})\subseteq\D_-^{\e}$ for all $t$.
\end{enumerate}
It is not hard write down such an isotopy $\varphi_t$ using radial coordinates, see Figure~\ref{fig:push-to-minus} for $k=2$. Now consider the homotopy $E^{\e} \times [\e,1]\to E^{\e/2}\coloneqq\Emb_{\D_-^{\e/2}}(\HD^k,X;U)$, defined by $K\mapsto K\circ \varphi_t$.
By Property~(1) this indeed defines paths in the space $E^{\e/2}$ ($E^\e\subseteq E^{\e/2}$ is smaller as it has the stronger boundary condition), ending with $K\varphi_1=K$, and starting with $K\varphi_\e=\U\varphi_\e$, using Property~(2) and that $K\in E^\e$.

\begin{figure}[!htbp]
    \centering
    \includegraphics[width=0.78\linewidth]{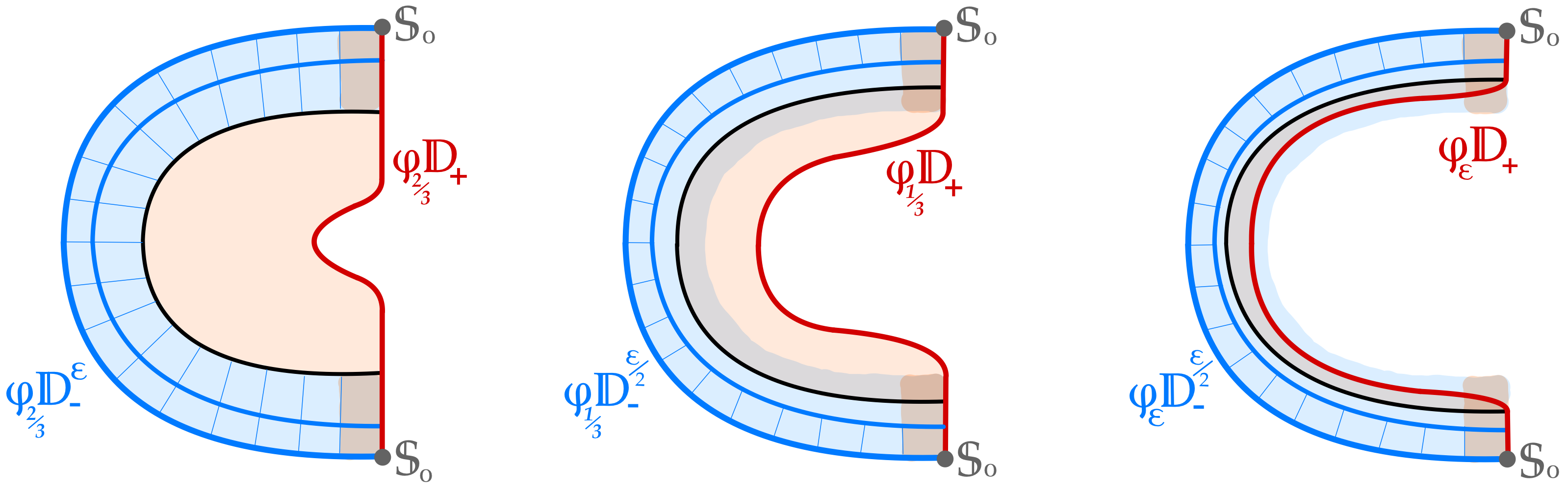}
    \caption{The image of $\varphi_t$ for $t=2/3,1/3,\e$. Dashed strips show where $\varphi_t$ is the identity; they are always contained in the blue-colored strip $\D_-^\e\subseteq\HD^k$. The black line is the image of $\D^{k-1}\times\{\e\}\subseteq\D_+^\e\subseteq\HD^k$.}
    \label{fig:push-to-minus}
\end{figure}
We next modify this homotopy to have image contained in the subspace $E^\e$. We fix an ambient isotopy $\Phi_t\colon X\xrightarrow{\cong} X$, $t\in[\e,1]$, supported in a collar of $\partial X$, such that $\Phi_1=\Id_X$ and $\U\circ\varphi_t|_{\D_-^\e}=\Phi_t^{-1}\circ\U|_{\D_-^\e}$ for $t\in[\e,1]$. This can be constructed explicitly in a collar $\partial X\times[0,\e]\hra X$ (or extend the isotopy of half-disks $\U\varphi_t$ by the usual ambient isotopy theorem). 

Then for $t\in[\e,1]$ let
\[
    R_t(K)\coloneqq \Phi_t\circ K\circ\varphi_t.
\]
This defines a path from $R_\e(K)=\Phi_\e K\varphi_\e=\Phi_\e \U\varphi_\e$ to $R_1(K)=\Id_XK\Id_\HD=K$, and now each  half-disk $R_t(K)$ for $t\in[\e,1]$ is in $E^\e$, i.e.\ it agrees with $\U$ on $\D_-^\e$. Indeed, by Property~(3) and $K\in E^\e$ we have $K\varphi_t|_{\D_-^\e}=\U\varphi_t|_{\D_-^\e}$, so $R_t(K)|_{\D_-^\e}\coloneqq \Phi_t K\varphi_t|_{\D_-^\e}=\Phi_t \U\varphi_t|_{\D_-^\e}=\U|_{\D_-^\e}$ by construction of $\Phi$.

Finally, for $t\in[0,\e]$ we let $R_t(K)\coloneqq R_{1+t-t/\e}(\U)$. This goes from $\U$ to $\Phi_\e \U\varphi_\e$, so glues with the above $R_t,t\in[\e,1]$ to a map $R\colon E^\e\times[0,1]\to E^\e$ which is the desired contraction as in \eqref{eq:contraction}. Finally, the homotopy inverse to $\amb_\U$ is defined by the formula 
\[
    \foliate^\e_\U(K)(t)=R_t(K)|_{\D_+^\e}=\begin{cases}
        \Phi_{1+t-t/\e}\circ \U\circ\varphi_{1+t-t/\e}|_{\D_+^\e},     & t\in[0,\e]\\
        \Phi_t\circ K\circ\varphi_t|_{\D_+^\e},                & t\in[\e,1].
    \end{cases}
\]
that we call the foliation: outside of a fixed collar of $X$ it agrees either with the $\e$-augmented arc $\U\varphi_t|_{\D_+^\e}$ or $K\varphi_t|_{\D_+^\e}$, and in the collar uses their modifications by $\Phi_t$, making ``a turn'' at $\wt{u}_-^\e\coloneqq\Phi_\e \U\varphi_\e|_{\D_+^\e}$.
% contained in $\U(\D_-^\e)$.
\end{proof}

\begin{remark}\label{rem:amb-directly}
    % Instead of constructing the connecting map $\amb$ using Lemma~\ref{lem:Hurewicz}, one can argue directly that the spaces in Theorem~\ref{thm:anyD} have isomorphic homotopy groups. Namely, one can define \emph{smooth homotopy groups} of $\Emb(Z,X)$, which are isomorphic to the usual homotopy groups by smooth approximation results. Given a restriction map as at the beginning of the proof of Theorem~\ref{thm:anyD}, 
    % the parametrized ambient isotopy theorem allows to lift such smooth maps and as a consequence gives a long exact sequence, where a smooth version of the connecting map is an isomorphism.
    % However, when we talk about our homotopy equivalence at the level of \emph{spaces}, this is not quite sufficient, as in fact measured by the difference between Serre and Hurewicz fibrations.
    Instead of constructing the connecting map $\amb$ using Lemma~\ref{lem:Hurewicz}, one can give the following argument that the spaces in Theorem~\ref{thm:anyD} have isomorphic homotopy groups. Using the adjunction $C^\infty(S, C^\infty(Y,X)) = C^\infty(S\times Y,X)$ for the Whitney topology, one defines \emph{smooth homotopy groups} of $\Emb(Y,X)$. They turn out to be isomorphic to the usual homotopy groups by smooth approximation results. Given a restriction map as at the beginning of the proof of Theorem~\ref{thm:anyD}, the parametrized ambient isotopy theorem allows to lift smooth maps and as a consequence gives a long exact sequence of smooth homotopy groups, where a smooth version of the connecting map is an isomorphism. However, when we talk about our homotopy equivalence at the level of \emph{spaces}, this is not quite sufficient.
\end{remark}

%%%%%%%%%%%%%%%%%%%%%
\subsection{From neat disks to half-disks}\label{subsec:neat-to-half}

Recall that the model half-disk $\HD^k\subseteq \D^k$ has boundary decomposed into two $(k-1)$-disks $\partial \HD^k = \D_- \cup_{\S_0} \D_+$ intersecting along the $(k-2)$-sphere $\S_0$, the corner of $\HD^k$. Also recall that $\U\colon\HD^k\hra X$ by definition restricts to a neat $(k-1)$-disk $u_+\colon\D_+\hra X$, while the image of $\D_-$ is contained in $\partial X$.
Using a Riemannian metric on $X$ we extend $u_+$ to an embedding $V\colon\D_+ \times \D^{d-k+1}_{\leq\e}\hra X$ onto a closed tubular neighborhood $\ol{\nu}^{\e}u_+$. We may assume that the restriction $V|\colon\D_+\times[0,\e]\cong\D_+^\e\hra X$ to the first normal vector agrees with our preferred $\e$-augmentation $u_+^\e=\U|_{\D_+^\e}$ and also, by decreasing $\e$ if necessary, that $\im(u_+^\e)=\ol{\nu}^{\e}u_+\cap\im(\U)$, i.e.\ the half-disk $\U$ does not return to this neighborhood of $u_+$.

We can view $V(\D_+\times\D^{d-k+1}_{\leq\e/2})$ as a $(d-k+1)$-handle attached to $X\sm V(\D_+\times\D^{d-k+1}_{<\e/2})$ along $V(\D_+\times\S^{d-k}_{\e/2})$, see Figure~\ref{fig:cutting-corners1}. Conversely, that complement is obtained from $X$ by removing a $(k-1)$-handle with core $u_+(\D_+)$, and is a smooth manifold with boundary only if we first \emph{smoothen the corner} $V(\S_0\times\S^{d-k}_{\e/2})$.

This is a standard procedure, used for example when attaching handles in the smooth category. In our context it amounts to picking an open subset $h_+\subseteq \ol{\nu}^{\e}u_+$ that is the union of $V(\D_+\times\mathring{\D}^{d-k+1}_{\leq\e/2})$ and a small set near the corner, so that $X\sm h_+$ is a compact smooth manifold with boundary. We make such a choice once and for all in $\D_+ \times \D^{d-k+1}_{\leq\e}$, and then let $h_+$ be its image in $X$ under $V$, see Figure~\ref{fig:cutting-corners2}. Namely, the constant radius $\e/2$ along $\D_+$ increases near $\S_0$ by a smooth function with fairly obvious properties (ensuring that the stretching function $\sigma$ in the next proof is well defined on all of $h_+$).
\begin{figure}[!htbp]
    \centering
    \begin{minipage}[c]{.5\textwidth}
      \centering
      \includegraphics[width=0.46\linewidth]{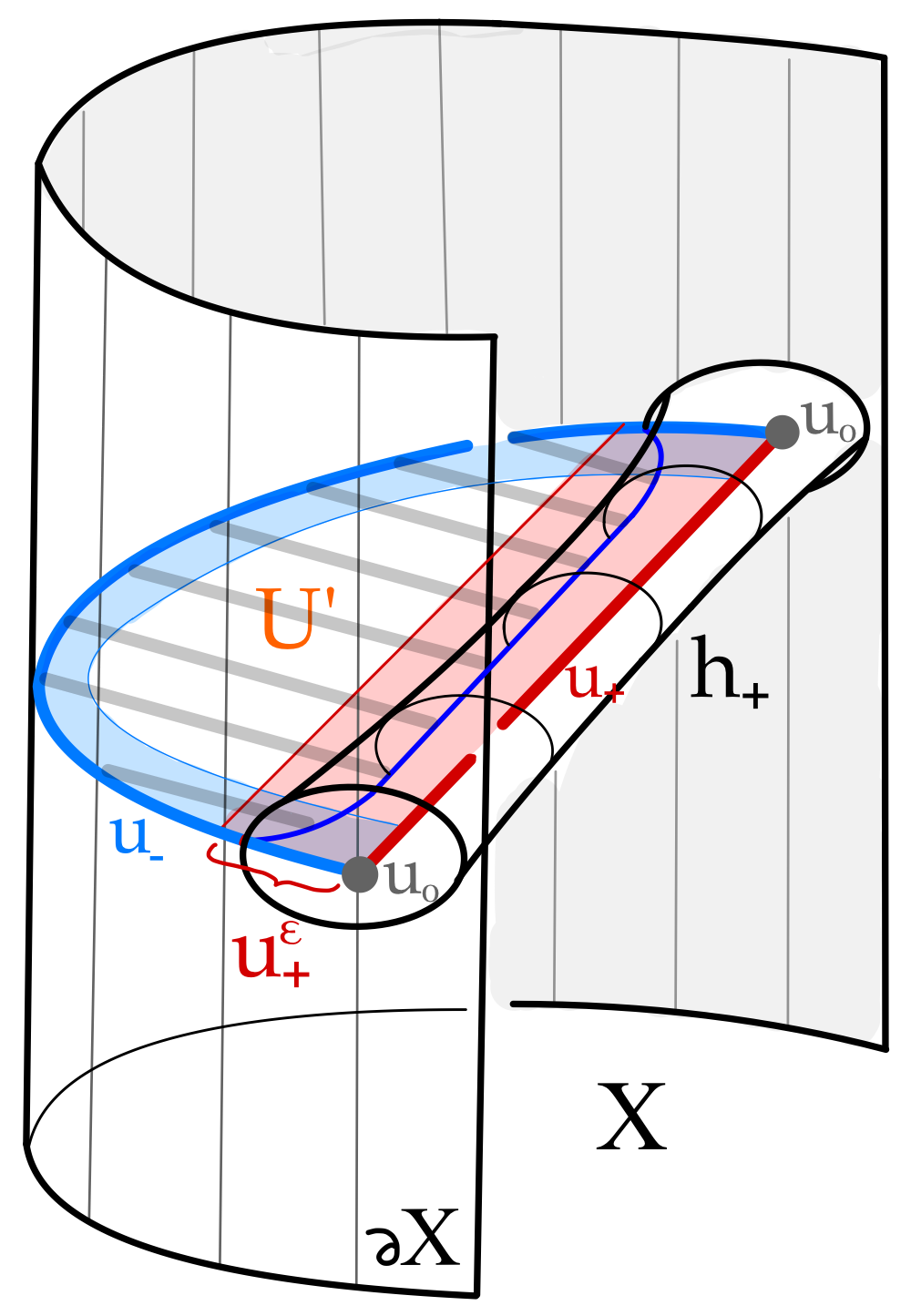}
      \captionof{figure}{Removing a handle $h_+\subseteq X$ turns the half-disk $\U$ in $X$ into a neat disk~$\U'$ in $X\sm h_+$.}
      \label{fig:cutting-corners1}
    \end{minipage}%
    \begin{minipage}[c]{.45\textwidth}
      \centering
      \vspace{0.3cm}
      \includegraphics[width=0.47\linewidth]{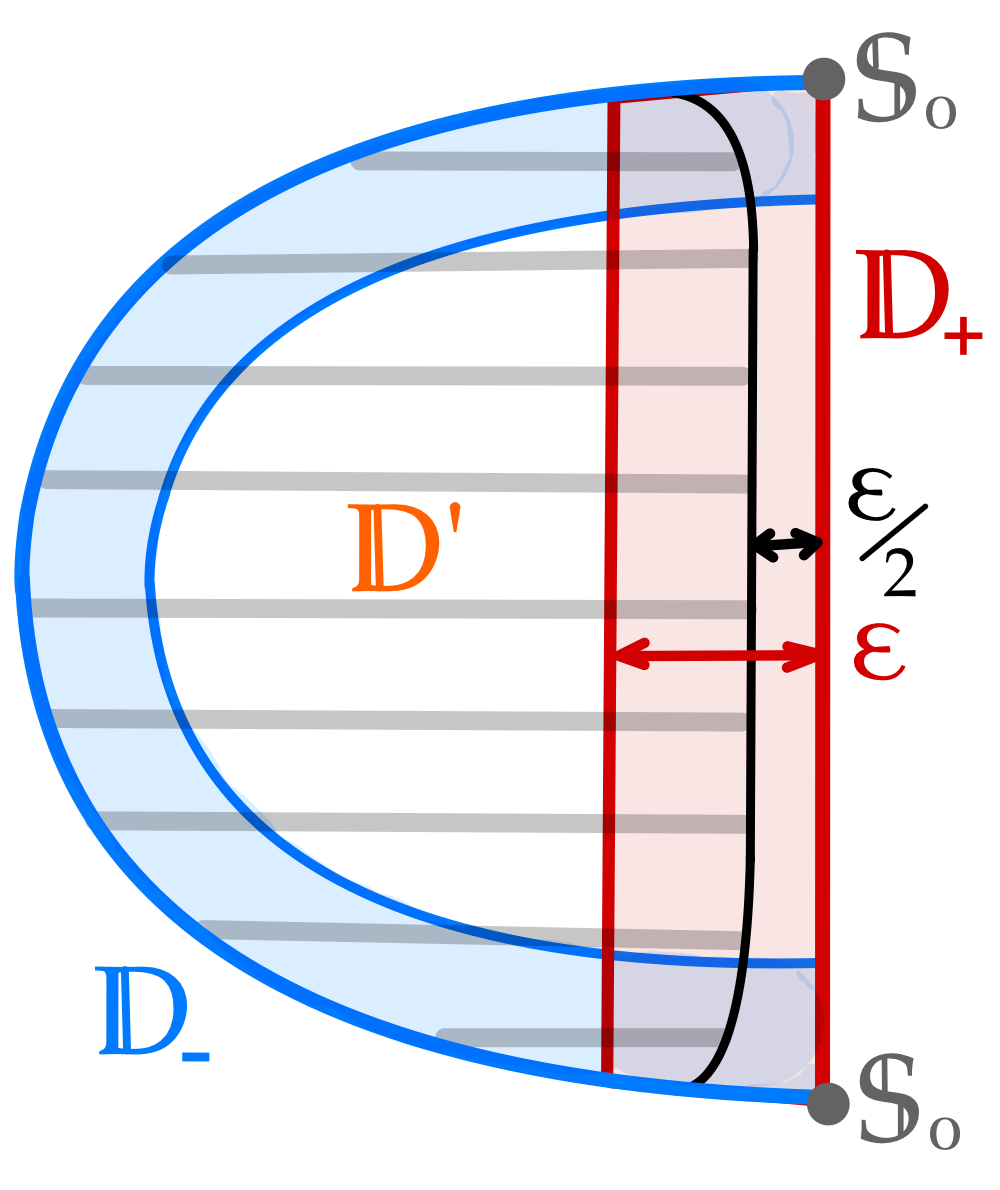}
      \vspace{0.5cm}
      \captionof{figure}{The model smoothening.}
      \label{fig:cutting-corners2}
    \end{minipage}
\end{figure}

Now let $\D'\coloneqq \HD^k \sm V^{-1}(h_+) \subseteq \HD^k$ and fix a diffeomorphism $\D^k \cong \D'$ that is the identity near $\D_-^\e\sm \D_+^\e$. Then the restriction of $\U$ to $\D'$ is a neat embedding $\U'\colon\D'\hra X\sm h_+$, by our choice of $\e$. This is an element in the space of neat embeddings $\Emb_{s^{\e/2}}(\D',X\sm h_+)\subseteq\Emb(\D^k,X\sm h_+)$, where the boundary condition is the remaining $\e/2$-part of $u_+^\e$ in $\D'$, as well as the original $u_-^{\e}$ along $u_-$. Note that we can reconstruct $\U$ from $\U'$ as $\U=\U'\cup u_+^\e$. 

\begin{lemma}\label{lem:X-M}
    The map $\bull\cup u_+^\e\colon\Emb_{s^{\e/2}}(\D',X\sm h_+)\to\Emb_{s^\e}(\HD^k,X)$ is a homotopy equivalence.
\end{lemma}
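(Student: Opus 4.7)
The strategy is to factor $\bull\cup u_+^\e$ as a homeomorphism onto a distinguished subspace $A\subseteq\Emb_{s^\e}(\HD^k,X)$ followed by an inclusion, and then to show the inclusion is a weak homotopy equivalence by a compression argument in the spirit of the proof of Theorem~\ref{thm:anyD}.

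\emph{Identification with a subspace.} Let
\[
    A\coloneqq\{K\in\Emb_{s^\e}(\HD^k,X) : K^{-1}(h_+)=\U^{-1}(h_+)\},
\]
the subspace of half-disks that meet the handle $h_+$ only along the strip prescribed by $\U$. For $K'\in\Emb_{s^{\e/2}}(\D',X\sm h_+)$ the glued half-disk $K'\cup u_+^\e$ lies in $A$: $K'$ avoids $h_+$ entirely, and $u_+^\e$ meets $h_+$ in exactly $u_+^{\e/2}=\U|_{\D_+^{\e/2}}\subseteq\U^{-1}(h_+)$. Conversely, for $K\in A$ the restriction $K|_{\D'}$ is a neat embedding into $X\sm h_+$ whose boundary condition matches $s^{\e/2}$ (the remaining $\e/2$-piece of $u_+^\e$ on the surviving part of $\D_+^\e$, together with $u_-^\e$ on $\D_-^\e$), since $K=\U$ on $\partial^\e\HD^k$. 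These two operations are mutual inverses and continuous in the Whitney topology, so $\bull\cup u_+^\e$ factors as a homeomorphism $\Emb_{s^{\e/2}}(\D',X\sm h_+)\cong A$ followed by the inclusion $A\hra\Emb_{s^\e}(\HD^k,X)$. It therefore suffices to prove this inclusion is a weak homotopy equivalence.

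\emph{Compression deformation.} Fix a smooth family of re-embeddings $\phi_s\colon\HD^k\hra\HD^k$ for $s\in[0,1]$, built by radial compression analogous to the $\varphi_t$ from the proof of Theorem~\ref{thm:anyD}, with $\phi_0=\Id$, with $\phi_s|_{\partial^\e\HD^k}=\Id$ for every $s$, and with $\phi_1(\HD^k)$ contained in a prescribed tubular neighborhood $U\subseteq\HD^k$ of $\partial^\e\HD^k$ whose thickness may be chosen arbitrarily small. For any $K\in\Emb_{s^\e}(\HD^k,X)$ the path $s\mapsto K\circ\phi_s$ stays in $\Emb_{s^\e}(\HD^k,X)$ and starts at $K$. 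The choice $\im(u_+^\e)=\ol{\nu}^\e u_+\cap\im(\U)$ forces $\U(\HD^k\sm\D_+^\e)\cap h_+=\emptyset$, so since $K=\U$ on $\partial^\e\HD^k$, $C^0$-continuity of $K$ lets us shrink $U$ so that $K(U\sm\D_+^\e)\cap h_+=\emptyset$ as well. Combined with $\phi_1|_{\D_+^\e}=\Id$ and $\U^{-1}(h_+)\subseteq\D_+^\e$, a direct set-theoretic computation gives $(K\circ\phi_1)^{-1}(h_+)=\U^{-1}(h_+)$, i.e.\ $K\circ\phi_1\in A$.

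\emph{Parametric families and main obstacle.} To upgrade this pointwise statement to a weak equivalence, apply the construction to any compact parametric family $K\colon B^n\to\Emb_{s^\e}(\HD^k,X)$: by compactness of $B^n$ the thickness of $U$ may be chosen uniformly in $b\in B^n$, producing a continuous homotopy ending in a map $B^n\to A$. Applied with $B^n=S^n$ this shows surjectivity of $\pi_n A\to\pi_n\Emb_{s^\e}(\HD^k,X)$; applied with $B^n=D^{n+1}$ it shows injectivity. The main technical point is the uniformity of the compression radius across the family, which boils down to continuous dependence of the offending set $K^{-1}(h_+)\sm\D_+^\e$ on $K$ together with compactness of the parameter space, and deserves some care but is routine.
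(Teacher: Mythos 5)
Your first step (identifying $\bull\cup u_+^\e$ with a homeomorphism onto the subspace $A$ of half-disks meeting $h_+$ only where $\U$ does) is exactly the paper's first step, but the compression family your proof then rests on does not exist. Any smooth embedding $\phi_1\colon\HD^k\hra\HD^k$ that is the identity on the collar $\partial^\e\HD^k$ is automatically surjective, hence a diffeomorphism of $\HD^k$: by invariance of domain $\phi_1(\mathring\HD^k)$ lies in $\mathring\HD^k$ and is open there, and it is also closed in $\mathring\HD^k$ (if $\phi_1(x_n)\to y\in\mathring\HD^k$, then a limit point $x$ of $(x_n)$ cannot lie on $\partial\HD^k$, since $\phi_1$ fixes $\partial\HD^k$ pointwise and $y$ is interior), so $\phi_1(\mathring\HD^k)=\mathring\HD^k$. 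In particular $\phi_1(\HD^k)$ can never be contained in a thin neighborhood $U$ of $\partial^\e\HD^k$; equivalently, such a containment would exhibit a null-homotopy of $\partial\HD^k\cong\S^{k-1}$ inside $U\simeq\S^{k-1}$. The analogy with the $\varphi_t$ from the proof of Theorem~\ref{thm:anyD} is misleading: there $\varphi_t$ is required to be the identity only on $\D_-^{\e/2}$, while $\D_+$ is free to move, which is why compressing into the ball $\D_-^\e$ is possible; in your situation the boundary condition of $\Emb_{s^\e}(\HD^k,X)$ forces $\phi_s=\Id$ on all of $\partial^\e\HD^k$ (injectivity of $K$ and $K\circ\phi_s=\U=K$ there leave no choice), and then no compression exists.

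The flaw is not just in the construction but in the mechanism: since every admissible $\phi_1$ is a diffeomorphism of $\HD^k$, precomposition $K\mapsto K\circ\phi_1$ never changes the image $K(\HD^k)$, so it cannot remove the intersections of a general half-disk with $h_+$ -- and a general $K\in\Emb_{s^\e}(\HD^k,X)$ genuinely does dive into $h_+$ deep in its interior, where you have no control on $K$. Any correct proof must therefore move the image, not the parametrization. This is what the paper does: it records the distance $r(K)$ from $K(\D')$ to $u_+$, which is positive and continuous in $K$, and applies ambient ``stretching'' diffeomorphisms of $X$, supported in the tubular neighborhood $\ol{\nu}^{\e}u_+$ and radial in its fibers, pushing the offending parts of $K$ past radius $\tfrac{3}{5}\e$ and hence out of $h_+$; the boundary condition survives because near $u_+$ every $K$ coincides with $u_+^\e$, and the construction is continuous in $K$ through $r(K)$ and preserves the subspace $\mathcal{E}=A$, yielding an explicit homotopy inverse (a genuine homotopy equivalence, whereas your parametric $\pi_n$-argument would at best give a weak one). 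Your discussion of uniformity over compact families does not address this: the missing ingredient is a target-side deformation, not a finer choice of collar thickness.
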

\begin{proof}
    The chosen boundary conditions $s^{\e/2}$ make this map well-defined. It is continuous with image $\mathcal{E}\subseteq\Emb_{s^\e}(\HD^k,X)$ consisting of those half-disks that meet $h_+$ only along $\im(u_+^{\e/2})$. In fact, it is a homeomorphism onto $\mathcal{E}$ whose inverse is given by restricting embeddings from $\HD^k$ to $\D'$. It thus suffices to construct a homotopy inverse from $\Emb_{s^\e}(\HD^k,X)$ back to this subspace $\mathcal{E}$.

    To this end, use the Riemannian metric on $X$ to obtain the continuous map
\[
    r\colon \Emb_{s^\e}(\HD^k,X) \ra (0,\tfrac{3}{5}\e)
\]
    so that $r(K)$ gives the minimal distance of $K(\D')$ to $K(\D_+)=\U(\D_+)$. 
    Possibly shrinking $\e$ further, we may assume that the geodesic distance to $\U(\D_+)$ on the sphere bundle in $\nu^\delta u_+$ is $\delta$ for all $\delta<\e$.
    By compactness and the injectivity of $K$, $r(K)$ is strictly positive as claimed and we now stretch it to $\frac{3}{5}\e$, in order to deform $K$ until it lies in $\mathcal{E}$. 
    We pick a smooth ``stretching'' function $\sigma\colon (0,\tfrac{1}{2}\e) \times [0,1] \times [0,\e] \to [0,\e]$
    such that $\sigma(r,t,x)=x$ whenever one of the following conditions is satisfied: $t=0$ or $x=0$ or $x\geq\frac{4}{5}\e$.
    Moreover, we require $\sigma(r,1,r)=\frac{3}{5}\e$ for all $r$ and that each $\sigma(r,t,-)$ is strictly increasing, see Figure~\ref{fig:stretch-and-s}.
 
If $(x,v) \in [0,\e] \times \S^{d-k}$ are polar coordinates, then we will refer to ``stretching by $\sigma(r,t)$'' as the self-diffeomorphism $(x,v)\mapsto (\sigma(r,t,x),v)$ of $\D^{d-k+1}_{\leq\e}$, see Figure~\ref{fig:stretch(K)}. 
The same formula applies to disk bundles in vector bundles if we stretch in a constant way along the base. 
Using the parametrization $V$, we can apply such diffeomorphisms also to our tubular neighborhood $\ol{\nu}_{\e}u_+$.
The stretching near the smoothened corners along $\S_0$ needs to be slightly modified but we leave this variation to the reader.
\begin{figure}[!htbp]
    \centering
    \begin{minipage}[b]{.45\textwidth}
      \centering
      \includegraphics[width=0.7\linewidth]{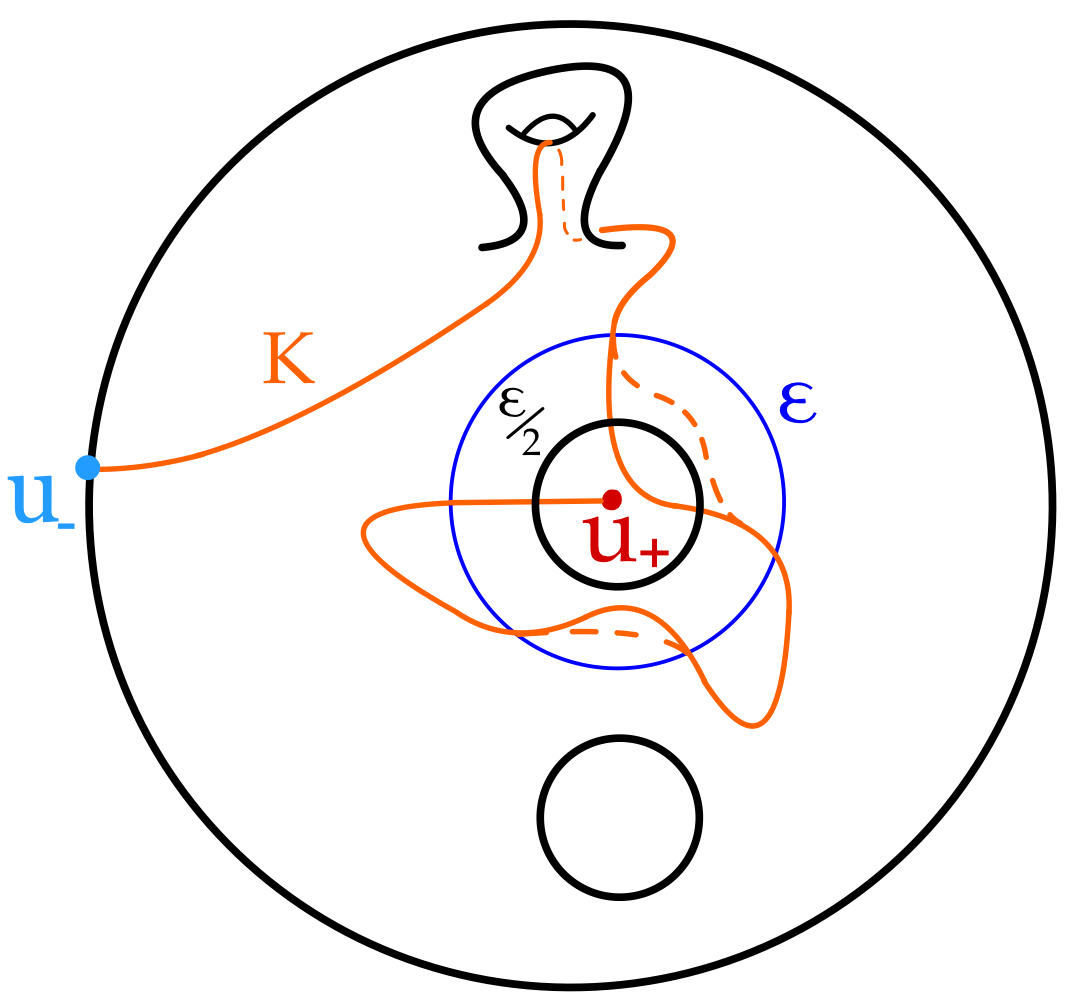}
      \captionof{figure}{Stretch $K$ towards the dashed lines to avoid the smallest central disk $h_+$.}
    \label{fig:stretch(K)}
    \end{minipage}%
    \begin{minipage}[b]{.54\textwidth}
      \centering
      \includegraphics[width=0.62\linewidth]{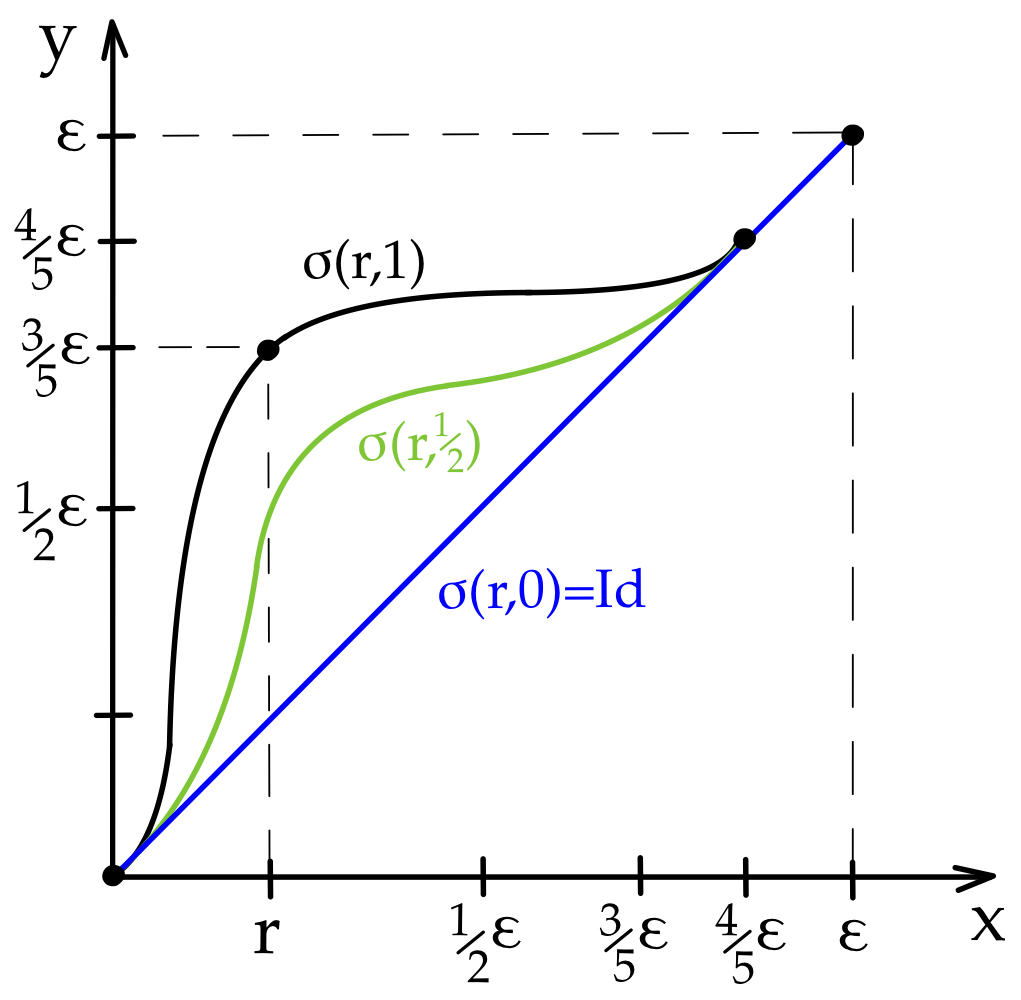}
      \captionof{figure}{Stretching functions $\sigma(r,t)\colon[0,\e]\to[0,\e]$ for fixed $r<\e/2$ and three values of $t\in[0,1]$.}
    \label{fig:stretch-and-s}
    \end{minipage}
\end{figure}

We can then construct a homotopy $H\colon \Emb_{s^\e}(\HD^k,X)\times[0,1]\to \Emb_{s^\e}(\HD^k,X)$
with $H_0=\Id$ and $\im(H_1)\subseteq \mathcal{E}$, induced by a smooth family of diffeomorphisms $\phi_{r(K),t}\colon X\to X$ that are the identity outside $\nu^{\frac{4}{5}\e}u_+$ and on $\ol{\nu}_{\e}u_+$ they stretch by $\sigma(r(K),t)$. 
More precisely, we define $H(K,t)$ to be the half-disk that equals $K$ on $\partial^{\e}\HD^k$ but away from that collar is given by the composition $\phi_{r(K),t}\circ K$. 

The properties of the stretching function $\sigma$ show that each $H_t$ sends $\mathcal{E}$ to itself and that $H_1$ is the required homotopy inverse. In fact, homotopies for both compositions to the identity are constructed from $H_t$ as follows: If $j\colon\mathcal{E} \subseteq \Emb_{s^\e}(\HD^k,X)$ is the inclusion then $H_1\circ j\colon\mathcal{E}\to \mathcal{E}$ is homotopic via $H_t\circ j$ to $H_0\circ j= \Id_{\mathcal{E}}$. Similarly, $j \circ H_1=H_1\simeq H_0=\Id$.
\end{proof}

% \begin{remark}
%     Observe that $\partial \big(X\sm h_+\big)\cong  \partial X\,\#\, \big(\S^{k-1}\times\S^{d-k}\big)$,
%     being the result of surgery on $\partial X$ along the unknotted sphere $u_-(\S_0)\subseteq\partial X$ (that bounds the embedded disk $u_-(\D_-)\subseteq\partial X$).
% \end{remark} 

\subsubsection{The proof of Theorem B}

We can now prove Theorem~\ref{thm-intro:LBT-deloop}. This is about the space $\Emb_{s^\e}(\D^k,M)$ of neat $k$-disks in a $d$-manifold $M$, that on a collar of the boundary agree with a basepoint $\U\colon\D^k\hra M$, such that $s=\partial\U\colon\S^{k-1}\hra\partial M$ has a \emph{framed geometric dual} $G\colon\S^{d-k}\hra \partial M$ (that is, the normal bundle $\nu_{\partial M}(G)$ is trivial and $G\pitchfork s=\{p\}$). Then the theorem says $\Emb_{s^\e}(\D^k,M)\simeq\Omega\Emb_{u_0}^\e(\D^{k-1},M_G)$, where $M_G\coloneqq M\cup_{(G,\psi)}h$, is obtained by attaching a $(d-k+1)$-handle  $h=\D^{d-k+1}\times\D^{k-1}$ along any framing $\psi\colon \S^{d-k}\times\D^{k-1}\hra \nu_{\partial M}(G)$ (this choice is inessential because $M_G\cong M\sm\nu \U$, see Lemma~\ref{lem:MG}).

\begin{proof}[Proof of Theorem~\ref{thm-intro:LBT-deloop}]
    Removing from $M_G$ an open $\e$-neighborhood $h_+$ of the cocore $\{0\} \times \D^{k-1}$ of $h$ gives $M$ back because we are all together just attaching $\S^{d-k} \times [\e,1] \times\D^{k-1}$ along $\S^{d-k} \times \{1\} \times\D^{k-1}\hra \partial M$. Using this diffeomorphism $M_G\sm h_+\cong M$ and Lemma~\ref{lem:X-M} we have
      \[
        \Emb_{s^\e}(\D^k,M)\simeq\Emb_{s^\e}(\D^k,M_G\sm h_+) \simeq \Emb_{s^\e}(\HD^k,M_G).
      \]
    Applying Theorem~\ref{thm:anyD} to $X\coloneqq M_G$, and Proposition~\ref{prop:e-collar}, we obtain
      \[
        \Emb_{s^\e}(\HD^k,M_G)\simeq\Omega\Emb_{u_0^\e}^\e(\D^{k-1},M_G).
      \]
    The final statement of Theorem~\ref{thm-intro:LBT-deloop}, identifying homotopy equivalences, follows from Proposition~\ref{prop:forg-augm}.
\end{proof}

\section{On homotopy groups of spaces of neat disks}\label{sec:disks}

In this section we apply Dax's results to compute the first homotopy group of the space of embedded disks differing from that of immersed disks. In the next section we extend this to $\e$-augmented disks. 

In fact, in Section~\ref{subsec:Dax} we follow Dax and work more generally with $V,X$ smooth (oriented) manifolds with boundary, and $V$ compact (but $X$ not necessarily). We consider the space $\Emb_\partial(V,X)$ (for $V=\D^{k-1}$ this was previously denoted $\Emb_{u_0}(\D^{k-1},X)$) of neat embeddings of $V$ into $X$ that are on the boundary given by $u_0\coloneqq u|_{\partial V}$, for a fixed ``unknot'' $u\colon V\hra X$; note that no dual is assumed. 

\begin{remark}
    For our applications we will actually need the space $\Emb_{\partial^\e}(\D^\ell,X)$ (for $\ell=k-1$ previously denoted $\Emb_{u_0^\e}(\D^{k-1},X)$), that consists of embeddings $\D^\ell\hra X$ that agree on $\S^{\ell-1}\times [0,\e]\subseteq\D^\ell$ with one such $u^\e$ (previously denoted $u_+^\e$). However, $\Emb_{\partial^\e}(\D^\ell,X)\simeq \Emb_\partial(\D^\ell,X)$, see Proposition~\ref{prop:e-collar}.
\end{remark}

We study the homotopy groups of $\Emb_\partial(V,X)$ by comparing them to the space $\Imm_\partial(V,X)$ of immersions $V\imra X$ with the same boundary condition $u_0$. Denoting $\dim V=\ell\leq d=\dim X$, and assuming $d-2\ell\geq0$ and $d\geq \ell+3$, the Dax invariant will be an isomorphism (bijection for $d-2\ell=0$ or $1$):
\[
    \Da\colon\;\pi_{d-2\ell}\big(\Imm_\partial(V,X),\Emb_\partial(V,X);u\big) \ra \faktor{\Z[\pi_1X]}{\relations_{\ell,d}}
\]
given on a relative homotopy class 
\[
    F\colon\; \left(\I^{d-2\ell},\,\I^{d-2\ell-1}\times\{0\},\,\partial\I^{d-2\ell-1}\times\I \cup \I^{d-2\ell-1}\times\{1\}\right)
    \ra (\Imm_\partial(V,X),\,\Emb_\partial(V,X),\,u)
\]
as the count of double point loops of the associated track $\wt{F}\colon \I^{d-2\ell}\times V\to \I^{d-2\ell}\times X$, see Theorem~\ref{thm:Dax-invt}. The group of relations $\relations_{\ell,d}$ is trivial for $\ell=1$, whereas for $\ell\geq2$ it is given by $\langle g-(-1)^{d-\ell}g^{-1}: g\in\pi_1X\rangle$.

We will then concentrate on the case $V=\D^\ell$ and study the connecting map
\begin{equation}\label{eq:delta}
    \delta_{\Imm}\colon\pi_{d-2\ell}\Imm_\partial(\D^\ell,X)\ra\pi_{d-2\ell}(\Imm_\partial(\D^\ell,X),\Emb_\partial(\D^\ell,X),u).
\end{equation} 
in order to prove the following in Section~\ref{subsec:immersions-conn-map}.

\begin{theorem}\label{thm:Dax-final}
    The inclusion $\incl\colon\Emb_\partial(\D^\ell,X)\hra\Imm_\partial(\D^\ell,X)$ is $(d-2\ell-1)$-connected. Assume $d\geq \ell+3$. If $d-2\ell-1\geq1$ there is a short exact sequence of groups:
    \[\begin{tikzcd}[column sep=large]
          \faktor{\Z[\pi_1X\sm1]}{ \relations_{\ell,d}\oplus \md_u(\pi_{d-\ell}X) } \arrow[tail, shift left]{r}{\partial\realmap} &
          \pi_{d-2\ell-1}(\Emb_\partial(\D^\ell,X),u)\arrow[two heads]{r}{\pi_{d-\ell-1}\incl}\arrow[dashed,shift left]{l}{\Da} &
         \pi_{d-2\ell-1}(\Imm_\partial(\D^\ell,X),u),
        \end{tikzcd}
    \]
    where the homomorphism $\md_u\colon\pi_{d-\ell}X\to\Z[\pi_1X\sm1]/ \relations_{\ell,d}$ is defined in~\eqref{eq:md-def} below in terms of $\Da\circ\delta_{\Imm}$. The dashed arrow $\Da$ inverts on $\ker(\pi_{d-\ell-1}\incl)$ an explicit realization map $\partial\realmap$. 
    
    If $d-2\ell-1=0$ there is an analogous exact sequence of sets, with $u$ omitted and $\partial\realmap$ extending to an action on $\pi_0\Emb_\partial(\D^\ell,X)$. Finally, if $d-2\ell=0$ there is an exact sequence of sets
    \[\begin{tikzcd}
        \pi_0\Emb_\partial(\D^\ell,X)
        \arrow{r}{\pi_0\incl} &
        \pi_0\Imm_\partial(\D^\ell,X)
        \arrow{rr}{\Da\circ\delta_{\Imm}} &&
        \faktor{\Z[\pi_1X]}{\relations_{\ell,d}},
        \end{tikzcd}
    \]
    and $\Da\circ\delta_{\Imm}$ agrees with the Wall self-intersection invariant $\mu_\ell$ (see Remark~\ref{rem:Grant}).
\end{theorem}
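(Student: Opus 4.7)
The plan is to run the long exact sequence of the pair $(\Imm_\partial(\D^\ell,X),\Emb_\partial(\D^\ell,X))$ based at $u$,
\[
\cdots \to \pi_n\Emb_\partial \xrightarrow{\pi_n\incl} \pi_n\Imm_\partial \xrightarrow{\delta_{\Imm}} \pi_n(\Imm_\partial,\Emb_\partial;u) \xrightarrow{\partial} \pi_{n-1}\Emb_\partial \to \cdots,
\]
and combine it with the Dax isomorphism $\Da \colon \pi_{d-2\ell}(\Imm_\partial,\Emb_\partial;u) \xrightarrow{\cong} \Z[\pi_1X]/\relations_{\ell,d}$ of~\eqref{eq-intro:Dax-invt}, and with the Smale--Hirsch description of $\pi_*\Imm_\partial(\D^\ell,X)$. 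For the connectivity statement I would first verify that $\pi_n(\Imm_\partial,\Emb_\partial;u)=0$ for every $n<d-2\ell$. In Dax's bordism description a relative class is represented by an $n$-parameter family of immersions whose track has $(n+2\ell-d)$-dimensional double-point locus; when $n<d-2\ell$ this locus is empty for a generic representative and the class lifts to $\Emb_\partial$. Feeding this into the LES gives the isomorphism $\pi_n\incl$ for $n\leq d-2\ell-2$ (and hence the iso $p_u$ in this range, via the Smale--Hirsch identification $\pi_n\Imm_\partial\cong \pi_{n+\ell}X$) together with the surjectivity of $\pi_{d-2\ell-1}\incl$.

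The heart of the proof is the exactness in degree $d-2\ell-1$, which by the LES reads
\[
\pi_{d-2\ell}\Imm_\partial \xrightarrow{\delta_{\Imm}} \pi_{d-2\ell}(\Imm_\partial,\Emb_\partial;u) \xrightarrow{\partial} \pi_{d-2\ell-1}\Emb_\partial \xrightarrow{\pi_{d-2\ell-1}\incl} \pi_{d-2\ell-1}\Imm_\partial \to 0,
\]
where the rightmost zero is the special case $n=d-2\ell-1$ of the vanishing above. The leftmost term is $\Z/\relations_{\ell,d}\oplus \pi_{d-\ell}X$ by the Smale--Hirsch calculation of Proposition~\ref{prop:r(1)}. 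Applying $\Da$ turns $\delta_{\Imm}$ into the map whose first summand hits $1\in\Z[\pi_1X]/\relations_{\ell,d}$ (the canonical family having a single double-point loop with trivial $\pi_1X$-label) and whose second summand is the definition
\begin{equation}\label{eq:md-def}
\md_u \coloneqq \Da\circ\delta_{\Imm}\big|_{\pi_{d-\ell}X}.
\end{equation}
The cokernel of $\Da\circ\delta_{\Imm}$ is therefore $\Z[\pi_1X\sm 1]/\bigl(\relations_{\ell,d}\oplus \md_u(\pi_{d-\ell}X)\bigr)$, producing the stated short exact sequence after identifying $\ker(\partial)$ with this cokernel.

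The dashed inverse $\Da$ on the kernel of $\pi_{d-2\ell-1}\incl$ is furnished by the realization map $\partial\realmap$ constructed in Theorem~\ref{thm:realization-map}: for $g\in \pi_1X$ one builds an explicit $(d-2\ell-1)$-family of embedded disks by swinging $u$ around a meridian sphere at an interior point of $u(\D^\ell)$ along the loop $g$, with a canonical null-homotopy through immersed disks via the meridian ball. This null-homotopy realizes $\partial\realmap(g)$ as a preimage of $g$ under $\partial$ in the LES, and a direct count of double-point loops of the family gives $\Da\circ\partial\realmap=\mathrm{Id}$; concatenation of swings shows $\partial\realmap$ is a group homomorphism. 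In the degenerate case $d-2\ell-1=0$ the same derivation produces an exact sequence of pointed sets where the abelian group $\Z[\pi_1X]/\relations_{\ell,d}$ acts through $\partial$ on $\pi_0\Emb_\partial$; the orbit of the basepoint is exactly $\ker(\pi_0\incl)$, and the realization map encodes this action.

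The case $d-2\ell=0$ is immediate from the LES at degree zero together with the Dax isomorphism. To identify $\Da\circ\delta_{\Imm}$ with Wall's self-intersection $\mu_\ell$ I would compare definitions: both invariants count signed $\pi_1X$-labelled self-intersections of a generic representative immersion, and for $\ell\geq 2$ the same reflection ambiguity on unordered sheets produces the relations $\relations_{\ell,d}$; this is essentially the content of Grant~\cite{Grant} cited in Remark~\ref{rem:Grant}. The main obstacle I foresee is step two, namely identifying $\md_u$ as a computable invariant of $\pi_{d-\ell}X$: this requires fixing an explicit Smale--Hirsch model of the summand $\pi_{d-\ell}X\subseteq \pi_{d-2\ell}\Imm_\partial$, producing for each element a family of immersed disks whose Dax count can be read off, and tracking the sign conventions for $\ell\geq 2$ carefully against $\relations_{\ell,d}$ to ensure the quotient in the short exact sequence is exactly the asserted one.
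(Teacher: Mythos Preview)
Your approach matches the paper's: long exact sequence of the pair, general position for the connectivity, the Dax isomorphism on the relative group, and Proposition~\ref{prop:r(1)} to pin down the image of $\Da\circ\delta_{\Imm}$. One small correction: the Smale--Hirsch computation of $\pi_{d-2\ell}\Imm_\partial$ is Corollary~\ref{cor:imm-htpy} (not Proposition~\ref{prop:r(1)}) and gives only an extension $\Z_{\ell,d}\hookrightarrow\pi_{d-2\ell}\Imm_\partial\twoheadrightarrow\pi_{d-\ell}X$, not a direct sum, so the paper defines $\md_u$ via an arbitrary set-section of $p_u$ followed by the quotient to $\Z[\pi_1X\sm1]/\relations_{\ell,d}$ (where the $\Z_{\ell,d}$-ambiguity is killed by the unit)---this does not change your cokernel computation.
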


This can be made more explicit by computing homotopy groups of $\Imm_\partial(\D^\ell,X)$ using Smale--Hirsch immersion theory, see Corollary~\ref{cor:imm-htpy}, which immediately gives Theorem~\ref{thm-intro:Dax}. An analogue for $\e$-augmented disks will be given as Theorem~\ref{thm:Dax-augmented} in Section~\ref{subsec:ribbons}, and its case $\ell=k-1$ and $X=M_G$ will be used to prove Theorem~\ref{thm-intro:combined} in Section~\ref{subsec:proof-of-ThmD}.

The first sentence of Theorem~\ref{thm:Dax-final} follows by general position. Namely, for a family of immersed disks $F(\vec{t})\colon \D^\ell\imra X$, $\vec{t}\in\S^n$, their double points correspond to double points of the track $\wt{F}\colon \S^n\times\D^\ell\to \S^n\times X$, defined by $(\vec{t},v)\mapsto(\vec{t},F(\vec{t})(v))$. The set of double points of $\wt{F}$ has dimension $n+d-2(d-\ell)=n-d+2l$. This is negative if $n<d-2\ell$, when a generic $n$-family is embedded, i.e.\ gives a class in $\pi_n\Emb_\partial(\D^\ell,X)$. If $n<d-2\ell-1$ then these lifts are also unique, by an analogous argument with one more parameter, implying the injectivity on $\pi_n$ as well.

Thus, it remains to determine the kernel of $\pi_{d-2\ell-1}\incl$. This amounts to computing the relative homotopy group in degree~$d-2\ell$ and the image of the connecting map $\delta_{\Imm}$.

%%%%%%%%%%%%%%%%
\subsection{The work of Dax: the relative homotopy group}\label{subsec:Dax}

\subsubsection{The original formulation}
As mentioned in the introduction (Section~\ref{subsec-intro:Dax}), Dax computes the homotopy groups of immersions relative to embeddings in the metastable range $0\leq n\leq 2d-3l-3$, in terms of certain bordism groups. His manifolds are more general than considered so far in this paper: they can be disconnected and nonorientable, and the target can be noncompact.

\begin{theorem}[{\cite[375]{Dax}}]\label{thm:Dax}
    Let $V$ be a smooth, compact $\ell$-manifold with boundary, $X$ a smooth $d$-manifold with boundary, and $u\colon V\hra X$ a smooth neat embedding. If $1\leq \ell\leq d$ are such that $d-2\ell\geq0$, then for $0\leq n\leq 2d-3l-3$ there is an explicit isomorphism (injection for $n=0$):
    \[
        \beta_n\colon
        \pi_n\big(\Imm_\partial(V,X),\Emb_\partial(V,X);u\big) 
        \ra \Omega_{n-(d-2\ell)}(\C_u;\vartheta_u).
    \]
\end{theorem}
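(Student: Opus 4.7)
The plan is to define $\beta_n$ via the double point locus of the track of a representative family, verify its homotopy invariance, and then establish bijectivity by a metastable-range argument of Haefliger type. Given a relative class represented by $F\colon (\I^n,\partial\I^n,*)\to (\Imm_\partial(V,X),\Emb_\partial(V,X),u)$, I would form the track
$\wt F\colon \I^n\times V\to \I^n\times X$, $(\vec t,v)\mapsto(\vec t,F(\vec t)(v))$.
This is an immersion of an $(n+\ell)$-manifold into an $(n+d)$-manifold, which is an embedding on the part of $\partial\I^n\times V$ sitting over $\Emb_\partial(V,X)$. After a perturbation of $F$, relative to this embedded face and through relative classes, the track $\wt F$ is self-transverse and its double point locus $\Delta(\wt F)$ is a closed manifold of dimension $2(n+\ell)-(n+d)=n-(d-2\ell)$. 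The double points come with a tautological map to the configuration space $\C_u$ (pairs of distinct points of $V$ together with a path in $X$ joining their $u$-images), and the normal data of $\wt F$ at a double point equips $\Delta(\wt F)$ with the stable tangential structure $\vartheta_u$. I would set $\beta_n([F])\coloneqq [\Delta(\wt F)\to \C_u]$.

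For well-definedness, a relative homotopy between two such $F$'s gives an $(n+1)$-parameter family whose generic track has self-transverse double points forming a bordism in $\C_u$ between the double point loci of its endpoints. The metastable bound $n\leq 2d-3\ell-3$ enters twice here: it forces triple points of a $1$-parameter family of tracks (expected dimension $n+1-2(d-\ell)$) to be empty after a generic perturbation, so the bordism is a closed manifold, and it ensures that double points lift uniquely to ordered pairs of sheets, so the class lands in the stated bordism group with no ambiguity.

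Surjectivity proceeds by an explicit realization (``finger move'') construction. A bordism class is represented by a closed $(n-(d-2\ell))$-manifold $N$ equipped with a map to $\C_u$, assigning to each point of $N$ a pair of points in $V$ together with a path in $X$ joining their $u$-images. I would realize this data as an $n$-parameter family of finger moves on the constant family at $u$: each path is promoted to a small embedded bigon along which a transverse double point is introduced. In the metastable range these fingers can be kept mutually disjoint and supported away from $\partial\I^n$, so the result is a family of immersions which is an embedding on the boundary and whose double point locus is exactly $N$. Its class under $\beta_n$ is then the given one.

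Injectivity will be the main obstacle. Assume $\beta_n([F])=0$, so $\Delta(\wt F)$ bounds a compact manifold $W$ in $\C_u$ carrying the $\vartheta_u$-structure that pairs the double points and specifies Whitney-type data in $X$. I would follow Haefliger's strategy, for which $\C_u$ is precisely designed: the equivariant problem of deforming a self-transverse immersion to an embedding is, in the range $n\leq 2d-3\ell-3$, governed by the single primary obstruction recorded by $\Delta(\wt F)\subseteq \C_u$. Concretely, $W$ furnishes an $(n{+}1)$-parameter family of embedded Whitney disks in $\I^n\times X$ pairing the double points of $\wt F$, and performing the corresponding Whitney moves in this family rel.\ boundary gives a homotopy of relative classes from $[F]$ to the trivial class $[u]$. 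The metastable bound is exactly what is needed to keep the Whitney disks embedded and disjoint throughout the family, and to upgrade the combinatorial cancellation to a geometric one. The case $n=0$ recovers the asserted injection of sets, where no group structure is claimed.
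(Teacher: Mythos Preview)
The paper does not prove this theorem; it is quoted from Dax's original article, as the citation \cite[375]{Dax} in the theorem header indicates. Immediately after stating it the authors write that they ``do not recall here the definitions from \cite[338]{Dax} of the space $\C_u$ and the stable vector bundle $\vartheta_u$ over it, to save space,'' and they only extract from Dax the three structural properties of $(\C_u,\vartheta_u)$ that they need downstream. So there is no proof in this paper to compare your proposal against.

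That said, your sketch does capture the broad shape of Dax's argument (double-point locus of the track as a bordism class, one more parameter for well-definedness, finger moves for surjectivity, parametrized cancellation for injectivity), and it is consistent with the paper's description of $\beta_n'$ following Lemma~\ref{lem:Dax}. Two points of inaccuracy are worth flagging. First, double points do \emph{not} ``lift uniquely to ordered pairs of sheets'': the space $\C_u$ (and its open part $E^\curlyvee_u$) is built from \emph{unordered} pairs, i.e.\ it is a quotient by the sheet-swapping involution, and the bundle $\vartheta_u$ is likewise a $\Z/2$-quotient. The paper is explicit about this. Second, your injectivity paragraph underplays the difficulty: the step ``$W$ furnishes an $(n{+}1)$-parameter family of embedded Whitney disks \dots\ performing the Whitney moves gives a homotopy'' is exactly the parametrized double-point elimination that constitutes the hard part of Dax's paper, and the metastable bound alone does not make it routine. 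The present paper alludes to this in Remark~\ref{rem:Gabai}, noting that even in the simplest case $\ell=1$, $d=4$ Gabai found it worthwhile to give an independent direct argument avoiding Dax's elimination.
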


For $n=0$ the relative homotopy group is understood as the set-theoretic quotient of $\pi_0\Imm_\partial(V,X)$ by the image of $\pi_0\Emb_\partial(V,X)$. Thus, $\beta_0$ is a complete obstruction for an immersion to be regularly homotopic to an embedding; this was also studied in \cite{Hatcher-Quinn} using similar techniques, see Remark~\ref{rem:Grant}.

As usual, the normal bordism group $\Omega_i(Y;\vartheta)$ of a space $Y$ with a stable vector bundle $\vartheta$ over it, consists of bordism classes of tuples $(D,b\colon D\to Y,B\colon b^*(\vartheta)\to\nu_D)$ where $D$ is an $i$-manifold with the stable normal bundle $\nu_D$, $b$ is a map, and $B$ is a bundle isomorphism. We do not recall here the definitions from \cite[338]{Dax} of the space $\C_u$ and the stable vector bundle $\vartheta_u$ over it, to save space as we do not need them explicitly. We only point out the following three properties they have.

Firstly, there is a fibration sequence $\Omega X\to \C_u\xrightarrow{pr_W} W$, where $W$ is a compactification (to a manifold with boundary) of the quotient of $V^2\sm \Delta_V$, where $V^2=V\times V$ and $\Delta_V\coloneqq\{(v,v)\in V^2\}$ is the diagonal, by the involution $(v,w)\mapsto(w,v)$. Thus, the interior is the space of coinvariants, $\mathrm{int}\,W= (V^2\sm \Delta_V)_{\Z/2}$.

Secondly, the subspace $E^\curlyvee_u\coloneqq pr_W^{-1}(\mathrm{int}\,W)\subseteq\C_u$ can be described as the quotient of the space
\[
    \wt{E}^\curlyvee_u\coloneqq\big\{(v,w,\rho)\in (V^2\sm\Delta_V)\times \Map([-1,1],X) : \rho(-1)=u(v),\, \rho(1)=u(w)\big\}
\]
by the free involution $(v,w,\rho)\mapsto(w,v,\rho^{-1})$. We use notation $[v,w,\rho]\in E^\curlyvee_u$. This description follows from the definition of the bundle $\omega$ over $W$, implying it is trivial over $\mathrm{int} W$, see \cite[337]{Dax}.

Thirdly, the restricted bundle $\vartheta_u|_{E^\curlyvee_u}$ is obtained as the quotient of a bundle $\wt{\vartheta}_u$ over $\wt{E}^\curlyvee_u$. Namely, for the stable normal bundle $\nu_V$ and the tangent bundle $TX$, let $\wt{\vartheta}_u$ be the pullback of $\nu_V^2\oplus TX$ under $(pr_{V^2},pr_X)\colon \wt{E}^\curlyvee_u\to V^2\times X$, $(v,w,\rho)\mapsto (v,w,\rho(0))$. This map is equivariant for the involution that switches the two $V$ factors and is identity on $X$, giving an involution on $\wt{\vartheta}_u$, whose quotient is $\vartheta_u|_{E^\curlyvee_u}$. The following is used in the survey article~{\cite{GKW}} without proof (with $E^\curlyvee_u$ denoted by $E^\curlyvee(u,u)$ there).
\begin{lemma}
    The space $\C_u$ is homotopy equivalent to its subspace $E^\curlyvee_u$. In particular,
    \[
        \Omega_{n-(d-2\ell)}(\C_u;\vartheta_u)\cong\Omega_{n-(d-2\ell)}(E^\curlyvee_u;\, \vartheta_u|_{E^\curlyvee_u}).
    \]
\end{lemma}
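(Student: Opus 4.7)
The plan is to lift a deformation retraction from the base $W$ of the fibration $pr_W\colon\C_u\to W$ to its total space.

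\textbf{Step 1.} First I would show that the inclusion $j\colon\mathrm{int}\,W\hra W$ is a homotopy equivalence. Since $W$ is a compact manifold with boundary (with corners, inherited from those of $V^2$) whose interior is $(V^2\sm\Delta_V)_{\Z/2}$, the general theory of manifolds with corners, via Cerf's prismatic collars as reviewed in Section~\ref{subsec:manifolds-with-corners}, yields a deformation retraction $r\colon W\times[0,1]\to W$ with $r_0=\mathrm{id}_W$, $r_1(W)\subseteq\mathrm{int}\,W$, and $r_t(\mathrm{int}\,W)\subseteq\mathrm{int}\,W$ for every $t\in[0,1]$. In particular $j$ is a homotopy equivalence with inverse $r_1$.

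\textbf{Step 2.} Next, $E^\curlyvee_u=pr_W^{-1}(\mathrm{int}\,W)$ sits in the pullback square
\[
\begin{tikzcd}
E^\curlyvee_u \arrow[r, hook, "i"] \arrow[d] & \C_u \arrow[d, "pr_W"] \\
\mathrm{int}\,W \arrow[r, hook, "j"] & W.
\end{tikzcd}
\]
Since $pr_W$ is a Hurewicz fibration and $j$ is a weak equivalence, it is standard (right properness of the Quillen model structure on topological spaces) that pullback along a fibration preserves weak equivalences of bases, so $i$ is a weak equivalence. Both spaces have the homotopy type of CW complexes, so $i$ is a genuine homotopy equivalence. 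The identification of bordism groups is then formal, since $\vartheta_u|_{E^\curlyvee_u}$ is by definition $i^*\vartheta_u$ and a homotopy equivalence covered by a bundle isomorphism induces an isomorphism on $\Omega_*$ in every degree.

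\textbf{Main obstacle.} The only point that genuinely requires checking is that $pr_W\colon\C_u\to W$ really is a (Hurewicz) fibration; this depends on Dax's precise definition of $\C_u$, which extends the path-space description of $E^\curlyvee_u$ across the diagonal $\Delta_V$ and across $\partial V^2$ in the compactification, and is omitted from the excerpt. If $pr_W$ turns out to be only a quasi-fibration, one concludes instead via Dold's theorem, which still gives a weak equivalence and so suffices for the bordism-group conclusion. An explicit alternative is to construct a lift $\wt r_t\colon\C_u\to\C_u$ of $r_t\circ pr_W$ by reparametrizing the path $\rho$ in Dax's description on small neighborhoods of its endpoints as $[v,w]\in W$ moves inward under $r_t$, arranging $\wt r_1$ to land in $E^\curlyvee_u$ and to restrict on $E^\curlyvee_u$ to a map homotopic to the identity; this avoids abstract fibration machinery altogether.
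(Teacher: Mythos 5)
Your argument is correct and is essentially the paper's own proof: the paper also observes that $\mathrm{int}\,W\hookrightarrow W$ is a homotopy equivalence (as the interior of a compact manifold with boundary) and concludes that the pullback $pr_W^{-1}(\mathrm{int}\,W)=E^\curlyvee_u$ is homotopy equivalent to $\C_u$. The "main obstacle" you flag is not one in context, since the fibration sequence $\Omega X\to\C_u\xrightarrow{pr_W}W$ is part of Dax's construction and is recorded in the paper just before the lemma, so the fibration property of $pr_W$ may simply be invoked.
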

\begin{proof}
    As $\mathrm{int}\,W$ is the interior of the compact manifold $W$ with boundary, the inclusion $i\colon\mathrm{int}\,W\hra W$ is a homotopy equivalence. Thus, the pullback $i^*(\C_u)=p^{-1}(\mathrm{int}\,W)$ is homotopy equivalent to $\C_u$. 
\end{proof}

Let us now translate the isomorphism $\beta_n$ from Theorem~\ref{thm:Dax} to this simpler target bordism group as
\begin{equation}\label{eq:beta_n'}
    \beta_n'\colon
    \pi_n\big(\Imm_\partial(V,X),\Emb_\partial(V,X);u\big) 
    \overset{\cong}{\ra} \Omega_{n-(d-2\ell)}(E^\curlyvee_u;\, \vartheta_u|_{E^\curlyvee_u}).
\end{equation}
We also need the following standard result; see for example \cite[335]{Dax} (note that Dax considers all maps rather than immersions, so has more conditions on perfect maps, regarding singular points).
\begin{lemma}\label{lem:Dax}
   Under assumptions of Theorem~\ref{thm:Dax}, a smooth map
\[
F\colon\; 
    \left(\I^n,\, \I^{n-1}\times\{0\},\, \partial\I^{n-1}\times\I \cup \I^{n-1}\times\{1\}\right)\ra \left(\Imm_\partial(V,X),\,\Emb_\partial(V,X),\,u\right)
\] % We use this convention for the current description of realization map to have the positive sign... Dax takes 0-face to be const u.
    can be approximated, relative to the boundary, by a \emph{perfect map}, i.e.\ a smooth map $F$ whose track
\begin{equation}\label{eq:track}
\wt{F}\colon\;
    \I^n\times  V \ra \I^n\times X,\quad (\vec{t},v)
    \mapsto (\,\vec{t},\, F(\vec{t})(v)\,)
\end{equation}
  has no triple points, and double points are isolated and transverse. Equivalently, the restricted square $\wt{F}^2|\colon(\I^n \times  V)^2\sm\Delta_{\I^n \times  V}\to (\I^n \times X)^2$ is transverse to the diagonal $\Delta_{\I^n \times X}$.
\end{lemma}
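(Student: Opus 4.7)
The plan is to invoke Thom's multijet transversality theorem for the track map, handling the boundary constraints by a bump-function argument. By adjunction $F$ corresponds to a smooth map $\wt{F}\colon \I^n\times V\to \I^n\times X$, and since $F(\vec{t})$ is an immersion for every $\vec{t}$ and $\wt{F}$ preserves the $\I^n$-factor, $\wt{F}$ is itself an immersion. The two conditions of the lemma amount to: (i) the restricted square $\wt{F}^2|\colon (\I^n\times V)^2\sm\Delta_{\I^n\times V}\to(\I^n\times X)^2$ is transverse to $\Delta_{\I^n\times X}$, and (ii) the restricted cube $\wt{F}^3|$ avoids the thin diagonal of $(\I^n\times X)^3$. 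Condition~(ii) forbids triple points, while (i) forces each double point to be a standard transverse self-intersection, locally modelled on two transverse copies of $\R^{n+\ell}$ inside $\R^{n+d}$; this is what is meant by ``isolated and transverse'' in the lemma.

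A dimension count confirms both are generic: the double point locus in $(\I^n\times V)^2\sm\Delta$ has expected dimension $2(n+\ell)-(n+d)=n+2\ell-d$, and the triple point locus in $(\I^n\times V)^3\sm\text{fat diagonal}$ has expected dimension $3(n+\ell)-2(n+d)=n+3\ell-2d\leq -3$ under the Dax hypothesis $n\leq 2d-3\ell-3$, hence generically empty. The standard multijet transversality theorem (or equivalently the Abraham--Smale density theorem applied to the evaluation map of $F$) then guarantees that the set of maps $F$ satisfying (i) and (ii) is residual in $C^\infty(\I^n,\Imm_\partial(V,X))$. Residual suffices for approximation because immersions form an open subset of smooth maps, so every small enough perturbation of $F$ still takes values in $\Imm_\partial(V,X)$.

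The main obstacle is the boundary constraint: we must leave $F$ unchanged on $\I^{n-1}\times\{0\}$, where it is already valued in $\Emb_\partial(V,X)$, and on $\partial\I^{n-1}\times\I\cup\I^{n-1}\times\{1\}$, where it is the constant $u$. Fortunately, on and near both boundary faces the conditions (i) and (ii) already hold for free: on the embedding face $\wt{F}^2|$ misses $\Delta_{\I^n\times X}$ altogether because each $F(\vec{t})$ is an embedding, and on the constant face the track factors through the embedding $u$, so no double or triple points occur. Consequently, I would multiply the generic variation by a bump function supported in the interior and vanishing in a small collar of $\partial\I^n$, preserving the prescribed boundary values exactly; the interior transversality extends across the already-generic collar by openness of (i) and (ii). The stated equivalence at the end of the lemma is then an unpacking of definitions: in the absence of three coinciding sheets, every coincidence pair is automatically a transverse self-intersection of two branches, and transversality of $\wt{F}^2|$ to $\Delta_{\I^n\times X}$ is exactly the condition that each such pair meet in the standard local model.
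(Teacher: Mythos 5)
The paper does not actually prove this lemma: it is quoted as a standard result with a reference to Dax's paper (p.~335), so there is no in-house argument to compare against. Your proof supplies exactly the standard argument that the citation stands for, and it is essentially correct: multijet transversality for the track, the dimension counts $2(n+\ell)-(n+d)=n+2\ell-d$ for double points and $3(n+\ell)-2(n+d)=n+3\ell-2d\leq -3$ for triple points in the metastable range, and a bump-function perturbation supported away from $\partial\I^n$. Two points deserve to be made explicit. First, the perturbation must stay inside the constrained family (neat immersions restricting to $u_0$ on $\partial V$); this is harmless because $u_0$ is an embedding into $\partial X$ and the maps are neat, so all double points of the track lie over the interior of $V$ and the perturbing vector fields can be chosen supported there. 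Second, your claim that conditions (i)--(ii) hold \emph{near} the two boundary faces of $\I^n$, not just on them, needs the compactness observation that the double-point pairs of the track form a closed set uniformly bounded away from the diagonal (local injectivity of immersions is locally uniform in the $C^1$-topology over the compact family), so absence of double points on the faces propagates to a collar; ``openness of (i) and (ii)'' alone is not quite the right justification, since transversality to the diagonal is an open condition only in the presence of such a properness statement. Finally, note that literal isolation of double points only follows when $n\leq d-2\ell$; for larger $n$ in the metastable range the generic double-point locus is a manifold of dimension $n-(d-2\ell)$, which is consistent with your reading (and the paper's own ``equivalently, transverse to the diagonal'' reformulation) but not with the words ``isolated'' taken at face value.
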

For $F$ a perfect map as in the lemma, let $\beta_n'[F]$ be the bordism class of the tuple $(\Delta_{Dax},b_{Dax},B_{Dax})$ defined as follows. Firstly, the double point preimage set is defined as
\[
    \wt{\Delta}_{Dax}\coloneqq(\wt{F}^2|)^{-1}(\Delta_{\I^n \times X})
    \cong\left\{(\vec{t},v,w)\in \I^n\times(V^2\sm\Delta_V): F(\vec{t})(v)=F(\vec{t})(w)\right\},
\]
whereas $\Delta_{Dax}$ is the quotient of $\wt{\Delta}_{Dax}$ by the free involution interchanging the factors:
\[
    \Delta_{Dax}\coloneqq(\wt{\Delta}_{Dax})_{\Z/2}\cong
    \left\{(\vec{t},[v,w])\in\I^n\times\mathrm{int}\,W:F(\vec{t})(v)=F(\vec{t})(w)\right\}.
\]
Note that $F\colon\Delta_{Dax}\hra X$, $F(\vec{t},[v,w])=F(\vec{t})(v)=F(\vec{t})(w)$, embeds $\Delta_{Dax}$ as the double point set, and we have a double cover
$q\colon \wt{\Delta}_{Dax}\ra\Delta_{Dax}$.
Next, $b_{Dax}\colon\Delta_{Dax}\to E^\curlyvee_u$ is defined by
\begin{equation}\label{eq:b-Dax}
    b_{Dax}(\vec{t},[v,w])= \big[v,w,\rho\coloneqq F(\vec{1}-s(\vec{1}-\vec{t}))(v)|_{s\in[0,1]}\cdot F(\vec{1}-s(\vec{1}-\vec{t}))(w)|_{s\in[0,1]}^{-1} \big].
\end{equation}
So $\rho$ is a path in $X$ from $\rho(-1)=F(\vec{1})(v)=u(v)$ to $\rho(0)=F(\vec{t})(v)=F(\vec{t})(w)$ followed by a path to $\rho(1)=F(\vec{1})(w)=u(w)$. Note that Dax takes relative classes to instead be $u$ on the $0$-face $\I^{n-1}\times\{0\}$, so has a slightly different formula for $\rho$.

Finally, $B_{Dax}$ is an isomorphism of the stable normal bundle $\nu_{\Delta_{Dax}}$ and $b_{Dax}^*(\vartheta_u|_{E^\curlyvee_u})$, given as follows. Taking the pullback under $q\colon\wt{\Delta}_{Dax}\to\Delta_{Dax}$ we have
\begin{align}
    q^*\nu_{\Delta_{Dax}} \;\cong\; \nu_{\wt{\Delta}_{Dax}}
    & \;\cong\;
    \nu_{(\I^n \times  V)^2}|_{\wt{\Delta}_{Dax}} \oplus \nu_{\wt{\Delta}_{Dax}\subseteq (\I^n \times  V)^2}
    \;\cong_s\;
    \nu_V^2|_{\wt{\Delta}_{Dax}}\oplus (\wt{F}^2|)^*(\nu_{\Delta_{\I^n \times X}\subseteq (\I^n \times X)^2}).\label{eq:normal-bdle}
\end{align}
On the other hand, $q^*b_{Dax}^*(\vartheta_u|_{E^\curlyvee_u})\cong \wt{b}_{Dax}^*(\wt{\vartheta}_u)$ for the obvious double cover $\wt{b}_{Dax}\colon \wt{\Delta}_{Dax}\to\wt{E}^\curlyvee_u$ of $b_{Dax}$. Since $pr_X\circ\wt{b}_{Dax}(\vec{t},v,w)=F(\vec{t})(v)=pr_1\circ \wt{F}^2|(\vec{t},v,w)$ and $T(\I^n\times X)\cong\delta^*(\nu_{\Delta\subseteq(\I^n\times X)^2})$, we have
\begin{equation}\label{eq:other-bdle}
    \wt{b}_{Dax}^*(pr_X^*(TX)) \cong (pr_1\circ \wt{F}^2|)^*(TX)
    \,\cong_s\, (\wt{F}^2|)^*(pr_1^*(T(\I^n\times X))
    \,\cong\,
    (\wt{F}^2|)^*(\nu_{\Delta\subseteq(\I^n\times X)^2}),
\end{equation}
Since $\wt{\vartheta}_u\coloneqq pr_{V^2}^*\nu_V^2\oplus pr_X^*(TX)$ and $\wt{b}_{Dax}^*(pr_{V^2}^*\nu_V^2)\cong\nu_V^2|_{\wt{\Delta}_{Dax}}$, we have a stable isomorphism $\wt{B}_{Dax}$ of $q^*b_{Dax}^*(\vartheta_u|_{E^\curlyvee_u})$ and $q^*\nu_{\Delta_{Dax}}$. Moreover, this respects the involutions, so gives the desired $B_{Dax}$. Thus, we have defined a bordism class $(\Delta_{Dax},b_{Dax},B_{Dax})\in  \Omega_{n-(d-2\ell)}(E^\curlyvee_u;\, \vartheta_u|_{E^\curlyvee_u})$.

\subsubsection{The Dax bordism group for a simply connected source}

\begin{prop}\label{prop:Dax-bordism-V-1-conn}
    Let $V,X,u$ be as in the first sentence of Dax's Theorem~\ref{thm:Dax}. Additionally assume that $V$ and $X$ are oriented, and that $V$ is $1$-connected. Then
\[
    \Omega_0(E^\curlyvee_u;\vartheta_u|_{E^\curlyvee_u})\cong\faktor{\Z[\pi_1X]}{\relations_{\ell,d}}
    \quad\text{ with}\quad \relations_{\ell,d}=
    \begin{cases}
        0, & \text{if } \ell=1,\\
        \langle g-(-1)^{d-\ell}g^{-1}: g\in\pi_1X\rangle, & \text{otherwise.}
    \end{cases}
\]
\end{prop}
This identified the $0$-th bordism group with the target of Wall's self-intersection invariants. 
\begin{proof}
    It is a standard fact that $\Omega_0(E^\curlyvee_u;\vartheta_u|_{E^\curlyvee_u})\cong H_0(E^\curlyvee_u;\Z(\vartheta_u|_{E^\curlyvee_u}))$, where $\Z(\vartheta_u|_{E^\curlyvee_u})$ are the local coefficients induced by the orientation of the bundle, so over the connected component $c$ of $E^\curlyvee_u$ the coefficient group is $\Z$ or $\Z/2$ depending on whether $\vartheta_u|_c$ is orientable or not.
    
    To compute the set $\pi_0E^\curlyvee_u$ of components, we first find $\pi_i(V^2\sm\Delta_V)_{\Z/2}$ for $i=1,2$, using the exact sequence
\[\begin{tikzcd}
    \pi_1(V^2\sm\Delta_V)\rar[tail] & \pi_1(V^2\sm\Delta_V)_{\Z/2}\rar & \Z/2\rar & \pi_0(V^2\sm\Delta_V)\rar[two heads] & \pi_0(V^2\sm\Delta_V)_{\Z/2}.
\end{tikzcd}
\]
    For $\ell\geq2$ we have $\pi_0(V^2\sm\Delta_V)_{\Z/2}=\pi_0(V^2\sm\Delta_V)=1$.
    %whereas $(\D^1)^2\sm\Delta_{\D^1}$ has two components that get identified by the involutions $\Z/2\cong\pi_0((\D^1)^2\sm\Delta_{\D^1})$, so again $\pi_0(V^2\sm\Delta_Vx)_{\Z/2}=1$.
    Since $V$ is $1$-connected, from the fibration sequence $V\sm pt\to V^2\sm\Delta_V\to V$ we see that $\pi_1(V^2\sm\Delta_V)=1$, unless $\ell=1,2$. For $\ell=1,2$, the only examples are $\D^1$ and $\D^2$, for which $(\D^1)^2\sm\Delta_{\D^1}$ consists of two triangles that are permuted by the $\Z/2$, so the quotient is a single triangle. 
    For $\ell=2$, $\pi_1((\D^2)^2\sm\Delta_{\D^2})$ is the pure braid group on two strands, which is infinite cyclic and generated by a full twist.
    Similarly,  $\pi_1((\D^2)^2\sm\Delta_{\D^2})_{\Z/2}$ is the braid group on two strands which is again infinite cyclic, generated by a half twist.
    
    Next, we claim that there is a fibration sequence
\begin{equation}\label{eq:E_u-fibration}
\begin{tikzcd}
        \Omega X\rar & E^\curlyvee_u\rar{pr_{V^2}} &  (V^2\sm\Delta_V)_{\Z/2}.
\end{tikzcd}
\end{equation}
    Indeed, the fiber over $[v,w]\in(V^2\sm\Delta_V)_{\Z/2}$ consists of paths $\rho$ from $u(v)$ to $u(w)$ (or equivalently from $u(w)$ to $u(v)$), so taking $\gamma\in\Omega X$ to the path $\rho=u(\phi_v)\cdot\gamma\cdot u(\phi_w)^{-1}$ gives a homotopy equivalence, for some fixed whiskers $\phi_v$ from $v$ to $e_V\in V$ (alternatively, restrict Dax's fibration $pr_W\colon\C_u\to W$ to $\mathrm{int}\,W=(V^2\sm\Delta_V)_{\Z/2}$).
    The bottom of the induced long exact sequence of homotopy groups is:
\[\begin{tikzcd}
    \pi_1(E^\curlyvee_u,c)\rar & \{1\}\text{ or }\Z\text{ or }\Z/2 \rar & \pi_1X\rar[two heads] & \pi_0E^\curlyvee_u,
\end{tikzcd}
\]
    for $\ell=1$ or $\ell=2$ or $\ell\geq3$ respectively, and a component $c\in\pi_0E^\curlyvee_u$. A generator $\sigma$ of $\Z/2$ or $\Z$ sends $g=[\gamma]\in\pi_1X$ to $g^{-1}$: indeed, $\sigma$ is represented by the loop of two points in $V$ switching positions, so if  the lifted path starts at $[v,w,u(\phi_v)\cdot\gamma\cdot u(\phi_w)^{-1}]\in E^\curlyvee_u$ then it ends at $[w,v,u(\phi_w)\cdot\gamma\cdot u(\phi_v)^{-1}]=[v,w,u(\phi_v)\cdot\gamma^{-1}\cdot u(\phi_w)^{-1}]$. Thus,
\[
    \pi_0E^\curlyvee_u\cong\begin{cases}
        \pi_1X, & \ell=1\\
        \faktor{\pi_1X}{g\sim g^{-1}}, & \ell\geq2.
    \end{cases}
\]
    Moreover, $\sigma$ fixes an element $g\in\pi_1X$ if and only if $g=g^{-1}$. Thus, $\pi_1(E^\curlyvee_u,c)\to \Z/2\text{ or }\Z$ is surjective if and only if $c$ corresponds to an order $2$ element $g$ (i.e.\ $c=[v,w,u(\phi_v)\cdot\gamma\cdot u(\phi_w)^{-1}]$ for $[\gamma]=g$).
    
    Now, we claim that for $c=[g]\in\pi_0E^\curlyvee_u$ the bundle $\vartheta_u|_c$ is nonorientable if and only if $\ell\geq2$, $d-\ell$ is odd and $g^2=1$. Indeed, recall that $\vartheta_u|_{E^\curlyvee_u}$ is the quotient of $pr_{V^2}^*(\nu_V)\oplus pr_X^*(TX)\cong pr_{V^2}^*(\nu_u^2)\oplus pr_X^*(\nu_X)$ (using that $\nu_V\cong\nu_u\oplus\nu_X$ and $\nu_X\oplus TX$ is trivial) by the involution swapping the two $V$ factors. As $X$ is orientable, it suffices to check if there is a loop in $(V^2\sm\Delta_V)_{\Z/2}$ that is orientation-reversing for the quotient of $\nu_u^2$, and lifts to $E^\curlyvee_u$. We saw $\pi_1(V^2\sm\Delta_V)_{\Z/2}$ is $\Z$ or $\Z/2$, and along the generating loop $\sigma$ the monodromy is $(-1)^{d-\ell}$, since $(d-\ell)$ is the rank of $\nu_u$. By the previous paragraph, $\sigma$ lifts to $\pi_1(E^\curlyvee_u,[g])$ if and only if $g^2=1$.
        Therefore, $\Omega_0(E^\curlyvee_u;\vartheta_u|_{E^\curlyvee_u})$ is canonically isomorphic to
\[\begin{cases}
        \Z[\pi], & \ell=1,\\[5pt]
        \Z\left[\faktor{\pi}{g\sim g^{-1}}\right]\cong\faktor{\Z[\pi]}{\langle g=g^{-1}\rangle}, & \ell\geq2,d-\ell\;\text{even}, \\[5pt]
        \Z\left[\faktor{\{g\in\pi:g^2\neq 1\}}{g\sim g^{-1}}\right]\oplus\Z/2\left[\{g\in\pi:g^2= 1\}\right]\cong\faktor{\Z[\pi]}{\langle g=-g^{-1}\rangle} , & \ell\geq2,d-\ell\;\text{odd},
\end{cases}
\]
    where $\pi\coloneqq\pi_1X$. This was exactly denoted ${\Z[\pi_1X]}/{\relations_{\ell,d}}$ in the statement, and finishes its proof.
\end{proof}

%%%%%%%%%%%%%%%%
\subsubsection{The Dax invariant  for a simply connected source}\label{subsec:arcs}
We next simplify the Dax isomorphism $\beta'_n$ from \eqref{eq:beta_n'} for $n=d-2\ell$ using the description $\Omega_{n-(d-2\ell)=0}(\wt{E}^\curlyvee_u;\vartheta_u|_{E^\curlyvee_u})\cong\Z[\pi_1X]/\relations_{\ell,d}$ from the previous section. First note that for this to fall into the metastable range we need to have $0\leq d-2\ell\leq 2d-3l-3$, which says $d\geq \ell+3$, and $d-2\ell\geq0$. For $\ell=1$ and $d=4$ the following interpretation of Dax's work was also studied and used by Gabai in~\cite{Gabai-disks} (his spinning map is analogous to our $\partial\realmap$, see Remark~\ref{rem:Gabai}).

\begin{theorem}\label{thm:Dax-invt}
    Let $V,X,u$ be as in the first sentence of Dax's Theorem~\ref{thm:Dax}. Additionally assume that $V$ and $X$ are oriented, that $V$ is $1$-connected, and $d\geq \ell+3$ and $d-2\ell\geq0$. 
    Under the isomorphism of Proposition~\ref{prop:Dax-bordism-V-1-conn}, $\beta'_{d-2\ell}$ is equivalent to the isomorphism
\begin{align}
        & \Da\colon\;
        \pi_{d-2\ell}\big(\Imm_\partial(V,X),\Emb_\partial(V,X);u\big) \overset{\cong}{\ra} \faktor{\Z[\pi_1X]}{\relations_{\ell,d}},\nonumber\\
        & \Da[F]\coloneqq\sum_{i=1}^k\e_{(\vec{t}_i, x_i)}g_{x_i},\label{eq:Dax-def}
\end{align}
    which sends a perfect map $F$ to the sum over all double points $(\vec{t}_i, x_i)$ of its track $\wt{F}$ from \eqref{eq:track}, of the associated signed loops $\e_{(\vec{t}_i, x_i)}g_{x_i}$, where $\e_{(\vec{t}_i, x_i)}\in\{\pm1\}$ and $g_{x_i}\in\pi_1X$ are defined below.
\end{theorem}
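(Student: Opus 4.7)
The plan is to trace through Dax's construction $\beta'_{d-2\ell}[F] = [\Delta_{Dax}, b_{Dax}, B_{Dax}]$ for a perfect representative $F$ and to identify it with the claimed signed count under the isomorphism of Proposition~\ref{prop:Dax-bordism-V-1-conn}. The key dimensional observation is that $\wt{\Delta}_{Dax} \subseteq \I^{d-2\ell} \times V^2$ has dimension $(d-2\ell) + 2\ell - d = 0$, so by Lemma~\ref{lem:Dax} it is a finite collection of points $(\vec{t}_i, v_i, w_i)$, each a transverse double point of the track $\wt{F}$ meeting at some $x_i \in X$. Their $\Z/2$-quotients $(\vec{t}_i, [v_i, w_i])$ populate $\Delta_{Dax}$, and the zero-dimensional normal bordism class is determined by, for each $i$, (i) the component of $E^\curlyvee_u$ containing $b_{Dax}(\vec{t}_i, [v_i, w_i])$ and (ii) the orientation of $\vartheta_u|_{E^\curlyvee_u}$ pulled back by $B_{Dax}$ at that point.

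For (i), I would define $g_{x_i} \in \pi_1 X$ as follows. Using the 1-connectedness of $V$, fix whiskers $\phi_v$ from each $v \in V$ to a basepoint $e_V$. Then \eqref{eq:b-Dax} writes $b_{Dax}(\vec{t}_i, [v_i, w_i]) = [v_i, w_i, \rho_i]$ for an explicit path $\rho_i$ in $X$ from $u(v_i)$ through $x_i$ to $u(w_i)$, and I set
\[
    g_{x_i} \coloneqq [\, u(\phi_{v_i})^{-1} \cdot \rho_i \cdot u(\phi_{w_i}) \,] \in \pi_1(X, u(e_V)).
\]
By the discussion of the fibration~\eqref{eq:E_u-fibration} in the proof of Proposition~\ref{prop:Dax-bordism-V-1-conn}, this element is precisely the label of the component of $E^\curlyvee_u$ containing $b_{Dax}(\vec{t}_i, [v_i, w_i])$. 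The ambiguity of which of $v_i, w_i$ comes first replaces $g_{x_i}$ by $g_{x_i}^{-1}$, which is exactly the identification absorbed by $\relations_{\ell,d}$ for $\ell \geq 2$; for $\ell = 1$ the natural orientation of $\D^1$ removes this ambiguity and no relations are imposed.

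For (ii), I would define $\e_{(\vec{t}_i, x_i)} \in \{\pm 1\}$ as the transverse intersection sign of the two branches of $\wt{F}$ at the double point, using the ambient orientation of $\I^{d-2\ell} \times X$ and an ordering of the sheets. Then I would verify that, chasing through the chain of stable isomorphisms assembled in \eqref{eq:normal-bdle}--\eqref{eq:other-bdle}, this sign equals the orientation sign of $\wt{B}_{Dax}$ on the $\Z/2$-cover $q\colon \wt{\Delta}_{Dax} \to \Delta_{Dax}$, hence the local orientation contribution of $B_{Dax}$ to $H_0(E^\curlyvee_u; \Z(\vartheta_u|_{E^\curlyvee_u}))$. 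For $\ell \geq 2$ and $d-\ell$ odd, swapping the two sheets acts on $\nu_{\Delta \subseteq (\I^{d-2\ell} \times X)^2}$ by the reflection exchanging the two $\R^{d-\ell}$ summands of the normal bundle to the diagonal, which reverses orientation; this is exactly the $\Z/2$-action that, via Proposition~\ref{prop:Dax-bordism-V-1-conn}, promotes the relation to $g \sim -g^{-1}$ and forces $\Z/2$ coefficients on components with $g^2 = 1$. Summing the contributions $\e_{(\vec{t}_i, x_i)} g_{x_i}$ over $i$ then realizes $\beta'_{d-2\ell}[F]$ in $\Z[\pi_1 X]/\relations_{\ell,d}$.

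The main obstacle will be the orientation bookkeeping in the last step: the stable isomorphism $\wt{B}_{Dax}$ is assembled from pullbacks of $\nu_V^2$, of $TX$, and of the diagonal-normal identification, and one must verify that after passing to the $\Z/2$-quotient it still records the geometric transverse intersection sign, with the sign change under sheet-swapping controlled exactly by the parity of $d - \ell$. Once this is settled, the proof reduces to combining Dax's Theorem~\ref{thm:Dax}, the simplification to $E^\curlyvee_u$, and Proposition~\ref{prop:Dax-bordism-V-1-conn}.
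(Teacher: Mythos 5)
Your proposal follows essentially the same route as the paper's proof: use that $\Delta_{Dax}$ is zero-dimensional, read off the component of $E^\curlyvee_u$ via the fibration \eqref{eq:E_u-fibration} (your whisker-through-$u$ loop is based homotopic to the double point loop $\gamma_{x_i}$ of \eqref{eq:our-Dax-dp-loop} because $\I^{d-2\ell}\times V$ is simply connected, so the whisker choice is irrelevant), and trace the sign through \eqref{eq:normal-bdle}--\eqref{eq:other-bdle} back to the transverse intersection sign, with the sheet-swap ambiguity absorbed by $\relations_{\ell,d}$. The orientation bookkeeping you flag as the remaining obstacle is settled in the paper exactly as you outline, by noting that the only source of sign is the isomorphism $\nu_{\Delta_{Dax}\subseteq(\I^{d-2\ell}\times V)^2}\cong(\wt{F}^2|)^*(\nu_{\Delta\subseteq(\I^{d-2\ell}\times X)^2})$, i.e.\ the comparison of orientations in \eqref{eq:our-Dax-sign}.
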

% As usual, represent a relative homotopy class by a map $F\colon \I^{d-2}=\I^{d-3}\times\I\to \Imm_\partial(\D^1,X)$ satisfying
% \begin{equation}\label{eq:rel-class}
%     F\big( \partial\I^{d-3}\times\I \cup  \I^{d-3}\times\{1\} \big) =u
%     \quad\text{ and }\quad  F\big( \I^{d-3}\times\{0\} \big)\subseteq\Emb_\partial(\D^1,X).
% \end{equation}
% \begin{lemma}\label{lem:Dax}
%   After a small perturbation of $F$ preserving boundary conditions the associated map
%   \[\wt{F}\colon\;\I^{d-2} \times  \D^1 \ra \I^{d-2} \times X,\quad (\vec{t},\theta)
%     \mapsto (\,\vec{t},\, F(\vec{t})(\theta)\,)
%   \]
%   is an immersion with only isolated transverse double points.
% \end{lemma}
% Firstly, we pick a strict partial order on the space $\D^\ell\times\D^\ell\sm\Delta_{\D^L}$ as follows: we let $v<w$ if and only if $pr_1v<pr_1w$ or $pr_1v=pr_1w$ and $pr_2v<pr_2w$ or etc.\ or $pr_iv=pr_iw$ for $i\leq \ell-1$ and $pr_\ellv<pr_\ellw$.

Firstly, for any $v\in V$ fix a whisker $\phi_v\colon[0,1]\to V$ from the basepoint $e_\ell\in\partial V$ to $v$. For example, for $V=\D^\ell$ take the straight line $\phi_v(s)=sv+(1-s)e_\ell$ from $e_\ell=(-1,0,\dots,0)$ to $v\in\partial V$. For $\ell=1$ we simply have $e_\ell=-1$ and $\phi_v=[-1,v]$.

Secondly, the map $\wt{F}$ has finitely many double points, all of the form $(\vec{t}_i, x_i)$ with $1\leq i\leq k$, for some $\vec{t}_i\in\I^{d-2}$ and $x_i\coloneqq F(\vec{t}_i)(v_i)=F(\vec{t}_i)(w_i)\in X$ with $v_i,w_i\in V$. \textbf{Let us pick an order $(v_i,w_i)$.} In other words, for the immersed manifold $F(\vec{t}_i)$ we choose ``an order of the sheets'' at the double point $x_i$.

\begin{figure}[!htbp]
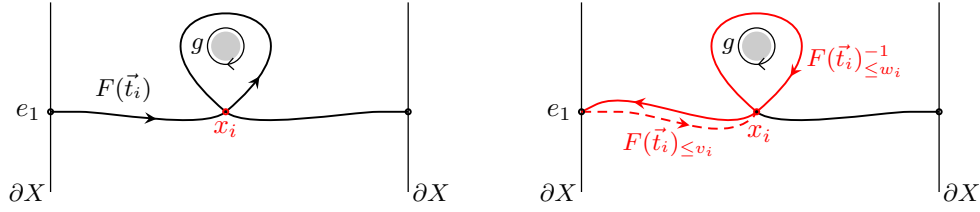

    \centering
    \includestandalone[width=0.9\linewidth,mode=buildmissing]{Figures-highdLBT/fig-double-pt-loop}
    \caption{The double point $x_i\in X$ of the arc $F(\vec{t}_i)\in\Imm_\partial(\D^1,X)$ has the associated loop $g_{x_i}=g$.}
    \label{fig:double-point}
\end{figure} 
Let $\e_{(\vec{t}_i, x_i)}\in \{\pm 1\}$ be the relative orientation at $(\vec{t}_i, x_i)$, obtained by comparing orientations of the tangent space $T_{(\vec{t}_i, x_i)}(\I^{d-2\ell}\times X)$ and (in this order):
\begin{equation}\label{eq:our-Dax-sign}
    d\wt{F} \big( T_{(\vec{t}_i,v_i)}( \I^{d-2\ell}\times V ) \big)
    \oplus d\wt{F} \big( T_{(\vec{t}_i,w_i)}( \I^{d-2\ell}\times V ) \big).
\end{equation}
% \[\deriv\wt{F}T_{(\vec{t}_i,\theta_i^-)}( \I^{d-2}\oplus\D^1 ) \oplus \deriv\wt{F} [T_{(\vec{t}_i,\theta_i^+)}( \I^{d-2}\oplus\D^1 )].
% or (T_{\theta_i^-}\D^1 \oplus T_{\vec{t}_i}\I^{d-2}) \oplus (T_{\theta_i^+}\D^1 \oplus T_{\vec{t}_i}\I^{d-2}),
% \]
Again using the fact that we chose an order of sheets at $x_i$, we define the group element $g_{x_i}\in\pi_1(X,u(e_\ell))$ to be represented by the following loop based at $u(e_\ell)$ (see Figure~\ref{fig:double-point} for $\ell=1$):
\begin{equation}\label{eq:our-Dax-dp-loop}
  \gamma_{x_i}\coloneqq F(\vec{t}_i)(\phi_{v_i})\cdot F(\vec{t}_i)(\phi_{w_i})^{-1}
\end{equation}
\emph{Now note that if $\ell=1$ the order $v_i<w_i$ is canonical in the interval $\D^1$.} On the other hand, if $\ell\geq2$ and we switch the order then the sign \eqref{eq:our-Dax-sign} changes by $(-1)^{d-\ell}$ and the loop \eqref{eq:our-Dax-dp-loop} becomes its inverse. But since $g_{x_i}-(-1)^{d-\ell}g_{x_i}^{-1}\in \relations_{\ell,d}$ the class $\Da(F)$ is well defined.
\begin{proof}[Proof of Theorem~\ref{thm:Dax-invt}]
    Since $\Delta_{Dax}$ is $0$-dimensional, the bordism class of $b_{Dax}\colon\Delta_{Dax}\to E^\curlyvee_u$ is by definition the sum of signed components of $E^\curlyvee_u$ containing $b_{Dax}(\Delta_{Dax})$, with the sign $\e_{Dax}([v,w],\vec{t})=+1$ if and only if $B_{Dax}$ is an orientation-preserving isomorphism $\nu_{\Delta_{Dax}}\cong b_{Dax}^*(\vartheta_u|_{E^\curlyvee_u})$. Equivalently, this is the sum of signed components of $\wt{E}^\curlyvee_u$ containing $\wt{b}_{Dax}(\wt{\Delta}_{Dax})$, modulo the involution. For $(\vec{t},v,w)\in\wt{\Delta}_{Dax}$ the component of $\wt{b}_{Dax}(\vec{t},v,w)=(v,w,\rho)$ corresponds to the class $[u(\phi_v)\cdot \rho\cdot u(\phi_w^{-1})]\in\pi_1X$ (see \eqref{eq:E_u-fibration}), with $\rho$ as in \eqref{eq:b-Dax}. We claim that this loop in $X$ is based homotopic to the loop $\gamma_{x_i}$ from \eqref{eq:our-Dax-dp-loop}. Indeed, $\I^{d-2\ell}\times V$ is simply connected, so the choice of whiskers is irrelevant: Dax chooses to go in a direction tangent to $\I^{d-2\ell}$, while we opt for a direction tangent to $V$.
    
    Finally, the sign of this component $\wt{b}_{Dax}(\vec{t},v,w)$ is positive if and only if $\wt{B}_{Dax}|_{(\vec{t},v,w)}$ preserves orientations. Tracing through isomorphisms \eqref{eq:normal-bdle} and \eqref{eq:other-bdle} we see that the source of the sign is the isomorphism $\nu_{\Delta_{Dax}\subseteq(\I^{d-2\ell}\times V)^2}\cong(\wt{F}^2|)^*(\nu_{\Delta\subseteq(\I^{d-2\ell}\times X)^2})$. Equivalently, $\e_{Dax}([v,w],\vec{t})=+1$ if and only if $d(\wt{F}^2|)$ is orientation preserving at $(v,w,\vec{t})$ if and only if the orientation of $\nu_{\Delta\subseteq(\I^{d-2\ell}\times X)^2}$ agrees with that of $d\wt{F}^2|_{(v,w,\vec{t})}T(\I^{d-2\ell}\times V)^2 = (d\wt{F}|_{(\vec{t},v)}(T(\I^{d-2\ell}\times V), d\wt{F}|_{(\vec{t},w)}(T(\I^{d-2\ell}\times V))$. This is precisely our definition of the sign $\e_{\vec{t},x}$ in~\eqref{eq:our-Dax-sign}.
\end{proof}

\begin{remark}\label{rem:Grant}
    For $d-2\ell=0$ this should be compared to Grant's result~\cite{Grant} that the Wall invariant
\[
    \mu_\ell\colon
    \faktor{\pi_0\Imm_\partial(V,X)}{\pi_0\Emb_\partial(V,X)}\ra
    \faktor{\Z[\pi_1X]}{\relations_{\ell,d}}
\]
    agrees with the Hatcher--Quinn invariant \cite{Hatcher-Quinn} for $V$ simply connected (the latter is defined for any $V,X$ with $2d-3\ell-3\geq0$). On one hand, $\mu_\ell$ is defined as the count of signed double point loops, so it clearly agrees with $\Da$. On the other hand, one can check that $\beta_0'$ agrees with the Hatcher--Quinn invariant (in fact for any $V,X$ with $2d-3\ell-3\geq0$).
    % We could give more details for this claim.
\end{remark}
\begin{example}\label{ex:computing-md}
    Let us compute the Dax invariant of the following class. Assume $\U\colon\D^k\hra M$ has a boundary dual $G\colon\S^{d-k}\hra\partial M$ and pick $g\in\pi_1M$. Push $G$ into the interior of $M$ and foliate it by a $(d-2k)$-family of $k$-spheres (see Figure~\ref{fig:realmap}), then drag a piece of $\U$ around $g$ and connect sum it into each of those $k$-spheres. This defines a class in $\pi_{d-2k}(\Imm_\partial(\D^k,X),\U)$, which clearly lifts $g\cdot[G]\in\pi_{d-k}M$. When considered as a relative class $\pi_{d-2k}(\Imm_\partial(\D^k,M),\Emb_\partial(\D^k,M),\U)$ we can compute its $\Da$ invariant: we see a single double point, namely $\U\cap G'=\{pt\}$, with the signed double point loop $g$.
\end{example}

\subsubsection{The realization map}

\begin{theorem}\label{thm:realization-map}
    Let $V,X,u$ be as in the first sentence of Dax's Theorem~\ref{thm:Dax}. Additionally assume that $V$ and $X$ are oriented.
    There is an explicit \emph{realization map}
\[
    \realmap\colon\faktor{\Z[\pi_1X]}{\relations_{\ell,d}}\ra\pi_{d-2\ell}(\Imm_\partial(V,X),\Emb_\partial(V,X),u).
\]
    If $V$ is $1$-connected, and $d\geq \ell+3$ and $d-2\ell\geq0$, then $\realmap$ is the inverse of $\Da$ given in Theorem~\ref{thm:Dax-invt}. 
\end{theorem}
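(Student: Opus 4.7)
The plan is to construct $\realmap$ generator-by-generator via an explicit local finger-move model, extend $\Z$-linearly, check the result descends modulo $\relations_{\ell, d}$, and then verify that $\Da\circ\realmap = \id$ by a direct computation of the signed double point count.

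For $g \in \pi_1(X, u(e_\ell))$ with representative loop $\gamma$, I would pick an interior point $v \in V$ with whisker $\phi_v \subset V$ from $e_\ell$ to $v$, set $x \coloneqq u(v)$, and identify a tubular neighborhood of a small disk around $v$ in $u(V)$ with $\D^\ell \times \D^{d-\ell}$ so that the meridian ball at $x$ is $B \cong \{0\} \times \D^{d-\ell}$ with boundary the meridian sphere $\S^{d-\ell-1}$. I would then define
\[
    \realmap(g) \colon (\D^{d-2\ell}, \partial \D^{d-2\ell}) \to (\Imm_\partial(V,X), \Emb_\partial(V,X))
\]
as follows: the radial coordinate $s \in [0,1]$ controls the depth to which a subdisk of $u$ near $v$ is pushed along the conjugate loop $u(\phi_v) \cdot \gamma \cdot u(\phi_v)^{-1}$ in $X$, while the spherical coordinate $\tau \in \S^{d-2\ell-1}$ selects a push-off direction via a chosen embedding of $\S^{d-2\ell-1}$ into the meridian sphere $\S^{d-\ell-1}$ (for instance as the unit sphere in the orthogonal complement of $\dot\gamma(0)$). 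The model is calibrated so that the family is embedded on $\partial \D^{d-2\ell}$ (the finger has been pushed past $u$, cleanly separated) and has precisely one transverse self-intersection at a single interior parameter, occurring at the midpoint of one distinguished ray.

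Extend $\realmap$ to $\Z[\pi_1 X]$ by superimposing disjoint local models at distinct interior points $v_g$; disjointness ensures additivity, so $\realmap$ is a group homomorphism once $d-2\ell \geq 1$. To see it descends modulo $\relations_{\ell,d}$ when $\ell \geq 2$, I would observe that precomposing the parameter by an orientation-reversing rotation of $\S^{d-2\ell-1}$ swaps the sheet order at the double point, which inverts $\gamma$ to $\gamma^{-1}$ and changes the sign by $(-1)^{d-\ell}$ (the monodromy of the normal bundle $\nu_u$ across the swap). Hence $g$ and $(-1)^{d-\ell} g^{-1}$ represent the same element of $\pi_{d-2\ell}(\Imm_\partial, \Emb_\partial, u)$ and $\relations_{\ell,d}$ is killed. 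For $\ell = 1$ the sheet order at a double point is canonically determined by the orientation of $V = \D^1$ and $\relations_{1, d} = 0$, so there is nothing to check.

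Under the additional hypotheses that $V$ is $1$-connected, $d \geq \ell+3$, and $d-2\ell \geq 0$, I would compute $\Da(\realmap(g))$ from \eqref{eq:Dax-def}: the track of $\realmap(g)$ is a perfect map with exactly one transverse double point by construction, its sign \eqref{eq:our-Dax-sign} equals $+1$ after the orientation choices in the model, and its loop \eqref{eq:our-Dax-dp-loop} is $u(\phi_v) \cdot \gamma \cdot u(\phi_v)^{-1} \simeq \gamma$. Thus $\Da(\realmap(g)) = g$ modulo $\relations_{\ell, d}$, giving a right inverse; combined with the bijectivity of $\Da$ from Theorem~\ref{thm:Dax-invt}, this forces $\realmap$ to be a two-sided inverse. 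The main obstacle is the geometric bookkeeping of the local model, namely arranging the parametrized finger move so that it has exactly one transverse double point (no extras or triples) and restricts to embeddings on all of $\partial \D^{d-2\ell}$; the dimensional hypotheses $d \geq \ell+3$ and $d-2\ell \geq 0$ leave enough codimension for generic position, and I would verify transversality by writing out the model in explicit local coordinates adapted to the tubular neighborhood of $u$.
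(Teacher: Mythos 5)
Your overall route is the paper's own: build $\realmap(g)$ by an explicit parametrized finger move that lassos a meridian sphere of $u$, arrange exactly one transverse double point whose loop is $g$ and whose sign is $+1$, extend $\Z$-linearly using disjoint supports (this is also how the paper handles commutativity when $d-2\ell=2$), and conclude from $\Da\circ\realmap=\Id$ plus bijectivity of $\Da$ that $\realmap$ is a two-sided inverse. Two remarks on the model itself: the paper's version (connect-sum $u$ with the $\ell$-disks $\mu(\alpha_{\vec t})$ foliating the meridian sphere, and contract across the meridian ball, which meets $u$ in a single point) makes your ``calibration'' issue automatic, and it keeps the complementary boundary faces constantly equal to $u$; a map of $(\D^{d-2\ell},\partial\D^{d-2\ell})$ into $(\Imm_\partial(V,X),\Emb_\partial(V,X))$ with no part of the boundary at $u$ does not yet represent an element of $\pi_{d-2\ell}(\Imm_\partial(V,X),\Emb_\partial(V,X),u)$. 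Also, the suggested subsphere ``unit sphere in the orthogonal complement of $\dot\gamma(0)$'' has dimension $d-\ell-2$, which equals $d-2\ell-1$ only for $\ell=1$; and for $d-2\ell=1$ the relative $\pi_1$ is only a pointed set, so the ``group homomorphism'' claim needs $d-2\ell\geq2$.

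The genuine gap is the descent modulo $\relations_{\ell,d}$. Precomposing your family with an orientation-reversing rotation of the parameter sphere $\S^{d-2\ell-1}$ merely replaces the relative homotopy class by its inverse (and accordingly negates its Dax invariant); it does not act on the two branches of the immersed disk at the double point, so it neither ``swaps the sheet order'' nor turns the double point loop $\gamma$ into $\gamma^{-1}$. The sheet-order ambiguity is a choice made in the \emph{definition} of $\Da$ -- that is exactly why its target is already the quotient by $\relations_{\ell,d}$ -- and is not a symmetry of your family that reparametrization can induce. What must be proved is that $\realmap(g)$ and $(-1)^{d-\ell}\realmap(g^{-1})$ are honestly homotopic as relative families. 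The paper does this geometrically: homotope the family so that the roles of the two sheets are exchanged (push the relevant neighborhood back along $g$, so the root and the tip of the finger trade places and the finger now follows $g^{-1}$), and for $\ell\geq2$ move the root freely around $\D^\ell$; under this exchange the meridian sphere is inverted, producing the sign $(-1)^{d-\ell}$. Under the extra hypotheses of the second sentence one could instead deduce descent from injectivity of $\Da$, but the first assertion of the theorem is made without those hypotheses, so some such geometric homotopy is genuinely needed and is missing from your argument.
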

\begin{proof}
For $g\in\pi_1X$ we define
\[
    \realmap(g)\colon(\I^{d-2\ell-1}\times\I,\, \I^{d-2\ell-1}\times\{0\},\, \partial\I^{d-2\ell-1}\times\I \cup \I^{d-2\ell-1}\times\{1\})\ra (\Imm_\partial(V,X),\,\Emb_\partial(V,X),\,u).
\]
Firstly, define for $\vec{t}\in\I^{d-2\ell-1}\times\{0\}$ the embedded arcs $\partial\realmap(g)(\vec{t})\coloneqq\realmap(g)_{\vec{t}}$ by dragging a piece of $u$ near $u(e_\ell)$ along the group element $g$, then ``swing a lasso'' around a meridian $\mu(\S^{d-\ell-1})$ at a point $\mathbf{x}\in u$ near $u(e_\ell)$, then drag back to $u$. More precisely, we foliate $\S^{d-\ell-1}$ by a $(d-2\ell-1)$-family of $\ell$-disks $\alpha_{\vec{t}}\colon\D^\ell\hra \S^{d-\ell-1}$ based at two fixed points, see Figure~\ref{fig:realmap}. Use the pinch map $\I^{d-2\ell-1}\to\I\vee\S^{d-2\ell-1}$ and along $\I$ apply the finger move to a small disk in $u(V)$ following $g$, ending with the connect sum of $u$ with the $\ell$-disk $\mu(\alpha_N)$. For $\vec{t}\in\S^{d-2\ell-1}$ connect sum with the disk $\mu(\alpha_{\vec{t}})$ instead.

For $(\vec{t},s)\in\I^{d-2\ell-1}\times\I$ the paths of immersions $\realmap(g)_{\vec{t},s}\colon V\imra X$ from $\realmap(g)_{\vec{t},0}=\realmap(g)_{\vec{t}}$ back to $\realmap(g)_{\vec{t},1}=u$ are defined by similarly foliating by $\ell$-disks the meridian ball $\ol{\mu}(\ball^{d-\ell})$ bounded by $\mu(\S^{d-\ell-1})$.

Define $\realmap(-g)$ analogously, but connecting into the meridian $\mu(\S^{d-\ell-1})$ from ``below''. Then extend $\realmap$ to $\Z[\pi_1X]$ linearly: for $d\geq4$ the target is an abelian group, but for $\ell=1,d=4$ we need to check that $\realmap(g)$ and $\realmap(h)$ commute. Namely, they can be constructed using \emph{disjoint} supports $J_i$ and different meridian balls $\mu_i(\ol{\ball^3})$, so there is a null homotopy $\I\times\I^2\to\Imm_\partial(\D^1,X)$ of their commutator, given at $(t_0,t_1,t_2)$ by applying the map $\realmap(g)(t_0,t_1)$ on $J_1$ and $\realmap(h)(t_0,t_2)$ on $J_2$, and $u$ otherwise.
\begin{figure}[!htbp]
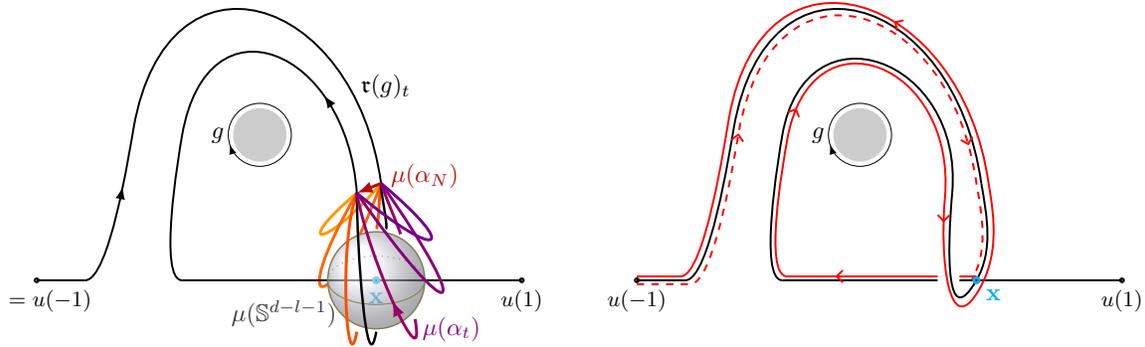

    \centering
    \includestandalone[mode=buildmissing,width=0.97\linewidth]{Figures-highdLBT/fig-realmap-pos-sign}
    \caption{
        \emph{Left.} The samples $\realmap(g)_t\in\Emb_\partial(\D^\ell,X)$ for several $t\in \I^{1}$, $\ell=1,d=4$, with $\realmap(g)_0=\realmap(g)_1=u$ as the horizontal arc. \emph{Right.} The double point loop $g_{\mathbf{x}}=g$ is the dashed arc followed by the solid red arc.}
    \label{fig:realmap}
\end{figure}

Let us show that $\Da\circ\realmap(g)=g$. In the family $\realmap(g)_{\vec{t},s}$ all disks are embedded except one, for which there is exactly one double point $\{\mathbf{x}\}=u\cap\ol{\mu}(\ball^{d-\ell})$ and the associated loop is precisely $g_{\mathbf{x}}=g\in\pi_1X$, see the right part of Figure~\ref{fig:realmap}. To determine the sign choose coordinates $\R^d=\R^{2\ell+1}\times\R^{d-2\ell-1}$ around $\mathbf{x}\in X$ so that $\R^{2\ell+1}\times\{0\}$ contains $\R^\ell\times\{\vec{0}\}\times\{0\}$ as the $w$-sheet and $\{\vec{0}\}\times\R^\ell\times\{0\}$ as the $v$-sheet (for $\ell=1$ this is depicted in the figure). The derivative at $v$-sheet of $\realmap(g)$ applied to $\I^{d-2\ell-1}$ gives the positive basis of $\I^{d-2\ell-1}$ and the positive basis of $\R^{d-2\ell-1}$, applied to $\I$ it is the sum of the positive $\I$ direction and the upward pointing vector in our $\R^{2l}$-chart, i.e.\ in $\{\vec{0}\}\times\{\vec{0}\}\times\R\subseteq\R^{2\ell+1}$, while applied to $\D^\ell$ it gives the positive basis of $\{\vec{0}\}\times\R^\ell\times\{0\}$. At $w$-sheet we see the vector in the positive $\I^{d-2\ell-1}$-direction and the positive basis of $\R^\ell\times\{\vec{0}\}\times\{0\}$. Comparing to the canonical basis of $\I^{d-2\ell-1}\times\I\times \R^{d-2\ell-1}\times\R^{2\ell+1}\subseteq\I^{d-2\ell-1}\times\I\times X$ we use $2(d-\ell-1)$ transpositions, so $\varepsilon_{\mathbf{x}}=+1$. We also clearly have $\Da\circ\realmap(-g)=-g$.
% dim 3 (t+up,F, t,R)=(up,F,t,R) vs (t,up,F,R)
% dim 4 (t$^{d-3}$+t+future$_X^{d-3}$+up, F,
% t$^{d-3}$+t, R)
% =(future$_X^{d-3}$+up,F,t$^{d-3}$+t,R) 
% vs (t$^{d-3}$,t, future$_X^{d-3}$, up,F,R)

Finally, we check that $\realmap(g)=(-1)^{d-\ell}\realmap(g^{-1})$ if $\ell\geq2$. Firstly, for any $\ell$ we can perform a homotopy of $\realmap(g)$ by making the first sheet stand still whereas the second sheet moves, and then pushing the neighborhood of $x$ back around $g$, so that the root and the tip of the finger switch positions and the finger follows $g^{-1}$. However, when $\ell\geq2$ the root of the finger can also be freely moved around $\D^\ell$, so that we obtain the class $(-1)^{d-\ell}\realmap(g^{-1})$. Namely, the meridian sphere $\mu(\S^{d-\ell-1})$ to the second sheet became the meridian sphere to the first sheet but with the sign $(-1)^{d-\ell}$, since it got inverted (cf.\ (anti)symmetry of the linking number). Thus, $\Da\circ\realmap=\Id_{\Z[\pi_1X]/\relations_{\ell,d}}$ by construction.
\end{proof}

\begin{remark}\label{rem:Gabai}
    This proves that $\Da$ is surjective, without using Dax's Theorem~\ref{thm:Dax}. For $\ell=1,d=4$, Gabai proves in \cite[Step 4]{Gabai-disks} that $\Da$ is injective also directly, avoiding a parametrized double-point elimination argument of Dax. Namely, Gabai shows that if $\tilde{F}$ has double points $x_i$ with signed loops $\e_{x_i}g_{x_i}$ then $F$ is homotopic to $\realmap(r)$, $r=\sum_i\e_{x_i}g_{x_i}$. But if $r=0\in\Z[\pi_1X]$ then $\realmap(r)$ is clearly null homotopic.
\end{remark}
\begin{remark}\label{rem:GW}
    To show $\realmap\circ\Da=\Id$ one could instead of Dax's theorem use the fundamental theorem of embedding calculus \cite{GKW}, which implies that the evaluation map $ev_2$ from $\Emb_\partial(\D^\ell,X)$ to the second Taylor stage $T_2$ is $(2d-3l-3)$-connected. Since $T_1\simeq\Imm_\partial(\D^\ell,X)$ and $d-\ell\geq 3$ we have
\[
    \pi_{d-2\ell} ev_2\colon
    \pi_{d-2\ell}(\Imm_\partial(\D^\ell,X),\Emb_\partial(\D^\ell,X),u)
    \overset{\cong}{\ra}
    \pi_{d-2\ell}(T_1,T_2,u).
\]
    For example, for $\ell=1$ the last group is isomorphic to $\Z[\pi_1X]$ via an isomorphism $\chi$, see e.g.\ \cite{K-thesis}. Moreover, by a slight generalization of the results there, we have $\chi\circ\pi_{d-3}ev_2\circ\realmap=\Id_{\Z[\pi_1X]}$, see \cite[Rem.~1.10]{K-thesis}. Thus, $\realmap$ is an isomorphism for $\ell=1$, so its unique left inverse $\Da$ is as well.
\end{remark}

%%%%%%%%%%%%%%%%
\subsection{Proof of Theorem~\ref{thm:Dax-final}}\label{subsec:immersions-conn-map}

\subsubsection{On homotopy groups of spaces of immersions}\label{subsec:immersions}
For a Riemannian manifold $X$, let $V_\ell(X)$ denote the $\ell$-frame bundle of the tangent bundle $TX$, where an $\ell$-frame is an ordered set of $\ell$ orthonormal vectors in $TX$. Recall that $\Imm_\partial(\D^\ell,X)$ denotes the space of immersions $\D^\ell\imra X$ that restrict to $u_0\colon\S^{\ell-1}\hra\partial X$ on the boundary $\S^{\ell-1}=\partial\D^\ell\subseteq\D^\ell$. As in Proposition~\ref{prop:e-collar} this space is homotopy equivalent to the subspace $\Imm_{\partial^\e}(\D^\ell,X)$ of those immersions that restrict to a fixed embedding $u_0^\e$ of a collar $\partial\D^\ell\times[0,\e]\subseteq\D^\ell$.

\begin{theorem}[Smale--Hirsch~\cite{Smale,Hirsch}]
    Taking the unit derivatives in all $\ell$ tangent directions at each point of $\D^\ell$ gives a homotopy equivalence 
    \[
\deriv\colon{\Imm}_{\partial^\e}(\D^\ell,X)\ra \Map_{\partial}(\D^\ell, V_\ell(X);\deriv u)
    \]
    to the space of maps $\D^\ell\to V_\ell(X)$, that along $\partial\D^\ell$ agree with the unit derivative of $u$. 
\end{theorem}
    We combine this with the following lemma.
\begin{lemma}\label{lem:U-cup-is-he}
    For a space $Y$ let $f\colon \D^\ell\to Y$ be a based map, $f(e_\ell)=e_Y$ for basepoints $e_\ell\in\partial\D^\ell$ and $e_Y\in Y$. Then there are inverse homotopy equivalences (based for $f$ and $-f\cup_\partial f$) 
    \[\begin{tikzcd}[column sep=large]
        -f\cup_\partial\bull\colon\quad \Map_{\partial}(\D^\ell,Y;f)\arrow[shift left=3pt]{r}[swap]{\sim} & \Map_*(\S^\ell,Y)\eqqcolon\Omega^\ell Y\quad \colon f\vee\bull\arrow[shift left=3pt]{l}
    \end{tikzcd}
    \]
    where $-f\cup_\partial K$ glues two disks along the boundary, while $f\vee S\colon\D^\ell\to\D^\ell\vee\S^\ell\to Y$ is the wedge sum (pinch off a sphere from a neighborhood of the point opposite to the basepoint $e_\ell\in\partial\D^\ell$ in $\partial\D^\ell$).
\end{lemma}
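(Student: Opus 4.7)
The plan is to verify that $-u\cup_\partial\bull$ and $u\vee\bull$ are mutually inverse based homotopy equivalences by exhibiting explicit null-homotopies of both round-trip compositions to the respective identities. The central tool is the \emph{fold null-homotopy}: the ``double'' map $-u\cup_\partial u\colon\S^\ell\to Y$ factors as the fold $\S^\ell\to\D^\ell$ (collapsing the $-$ hemisphere onto the $+$ hemisphere by reflection across the equator $\partial\D^\ell$) followed by $u$; since $\D^\ell$ is contractible based at $e_\ell$ (with image $e_Y$ under $u$), this composition carries a canonical based null-homotopy which I will use twice.

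For the first composition $\Omega^\ell Y \xrightarrow{u\vee\bull} \Map_\partial(\D^\ell,Y;u) \xrightarrow{-u\cup_\partial\bull} \Omega^\ell Y$, unravelling the definitions shows that $-u\cup_\partial(u\vee S)$ factors through the pinch $\S^\ell\to\S^\ell\vee\S^\ell$ (where the pinched bubble sits inside the $+$ hemisphere near $-e_\ell$) as $(-u\cup_\partial u)\vee S$. Applying the fold null-homotopy to the first wedge summand yields a based homotopy to $\ast\vee S\simeq S$, natural in $S$.

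For the reverse composition, given $K\in\Map_\partial(\D^\ell,Y;u)$, the map $u\vee(-u\cup_\partial K)$ pinches off a small disk near $-e_\ell$, colours the ambient $\D^\ell$ by $u$, and covers the pinched sphere by $-u$ on one hemisphere and $K$ on the other. I would construct the homotopy to $K$ in two correlated stages: first, inflate the pinched sphere through a smooth $1$-parameter family of pinch diffeomorphisms so that the $-u$ hemisphere expands to cover exactly the region of $\D^\ell$ still carrying $u$; second, apply the same fold null-homotopy to cancel the now coextensive $u$ and $-u$ pair. At the end, the $K$ hemisphere alone occupies $\D^\ell$, and after reparametrisation the map equals $K$. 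The boundary condition is constantly $u|_{\partial\D^\ell}$ because the whole manipulation is confined to the interior, with $-e_\ell$ the only boundary contact, at which all pieces agree with $u(-e_\ell)$ throughout.

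The main obstacle is pinning down the cancellation homotopy in the second composition with continuous dependence on $K$ and exact matching of the boundary value throughout the deformation. A structurally cleaner alternative is to invoke the cofibre sequence $\S^{\ell-1}\hookrightarrow\D^\ell\to\S^\ell$: mapping into $Y$ produces a fibration sequence
\[
\Map_\partial(\D^\ell,Y;u)\to\Map(\D^\ell,Y)\to\Map(\S^{\ell-1},Y)
\]
whose middle term deformation retracts to $Y$; choosing the null-homotopy of $u|_{\partial\D^\ell}$ provided by $u$ itself (via radial contraction to $e_\ell$) yields a based weak equivalence $\Map_\partial(\D^\ell,Y;u)\simeq\Omega^\ell Y$ whose concrete description one checks is precisely $-u\cup_\partial\bull$. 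Either route reduces the lemma to the fold null-homotopy above, and the direct approach has the advantage of producing the explicit inverse $u\vee\bull$ without appealing to abstract identifications of fibres.
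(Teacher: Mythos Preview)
Your proposal is correct and follows essentially the same approach as the paper: both directions rely on the fold null-homotopy of $-u\cup_\partial u$. For the second composite the paper is slightly more direct than your ``inflate then cancel'' description: it rewrites $u\vee(-u\cup_\partial K)=(u\vee -u)\cup_\partial K$ and then observes that along the straight-line foliation $\phi_v$ of $\D^\ell$ from $v\in\partial\D^\ell$ to $e_\ell$, the map $u\vee -u$ restricts to the loop $u(\phi_v)\cdot u(\phi_v)^{-1}$ based at $u(v)$, whose obvious null-homotopy is rel endpoints---so the boundary condition is manifestly preserved throughout, resolving exactly the obstacle you flagged.
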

\begin{proof}
    For a homotopy from $-f\cup_\partial(f\vee\bull)$ to $\Id_{\Omega^\ell Y}$ use the obvious null homotopy $-f\cup_\partial f\simeq\const_{e_Y}$. 
    
    Similarly, for the homotopy from $f\vee(-f\cup_\partial\bull)=( f\vee- f)\cup_\partial\bull$ to the identity collapse the part $f\vee-f$ to $e_X$: use the foliation $\phi_v\subseteq\D^\ell$ by the straight lines from $v\in\partial\D^\ell$ to $e_\ell\in\partial\D^\ell$ and then for each $v$ the obvious null homotopy of loop $f(\phi_v) f(\phi_v)^{-1}$ through loops based at $f(v)$.
\end{proof}

\begin{cor}\label{cor:immersions}
    There is a homotopy equivalence 
    \[
    \deriv_{u}(\bull)\coloneqq-\deriv(u)\cup_\partial\deriv(\bull)\colon\Imm_{\partial^\e}(\D^\ell,X)\xrightarrow{\sim}\Omega^\ell V_\ell(X).
    \]
\end{cor}

Thus, by definition, $\deriv_{u}$ sends $K\colon\D^\ell\imra X$ to the map $\deriv_{u}(K)\colon\S^\ell\to V_\ell(X)$ given as follows: if $x\in\S^\ell$ is in the north hemisphere (canonically identified with $\D^\ell$), take the unit derivative $\deriv(K)|_x$, and if $x$ is in the south hemisphere (canonically identified with oppositely oriented $\D^\ell$) take the unit derivative $\deriv(u)|_x$.

The homotopy equivalence $\deriv_u$ induces isomorphisms $\pi_{n}\Imm_\partial(\D^\ell,X)\xrightarrow{\cong} \pi_{n+\ell}V_\ell(X)$. Using this and Proposition~\ref{prop:TX} from Appendix~\ref{app:frame-bundles} about homotopy groups of frame bundles, we obtain the following corollary. Note that it implies that Theorem~\ref{thm-intro:Dax} indeed follows from Theorem~\ref{thm:Dax-final}.
\begin{cor}\label{cor:imm-htpy}
    Assume $d-2\ell\geq0$. The homomorphism $p_u\colon\pi_{n}\Imm_\partial(\D^\ell,X)\to \pi_{n+\ell}X$,
    given by $p_u(f)=(\vec{t}\mapsto -u\cup_\partial f_{\vec{t}})$ union the canonical null homotopy of $-u\cup_\partial u$ on the boundary, is an isomorphism for all $n\leq d-2\ell-1$, and we have an exact sequence
\begin{equation}\label{eq:imm-htpy-seq}
\begin{tikzcd}
    \Z_{\ell,d}\rar[tail] & \pi_{d-2\ell}\Imm_\partial(\D^\ell,X)\rar[two heads]{p_u} &
    \pi_{d-\ell}X,
\end{tikzcd}
\end{equation}
    where $\Z_{\ell,d}\coloneqq\Z/\relations_{\ell,d}$ is isomorphic to $\Z$ for $l=1$ or $d-\ell$ even, and to $\Z/2$ for $d-\ell$ odd with $\ell\geq2$.
\end{cor}

\subsubsection{The connecting map}
Recall the connecting map $\delta_{\Imm}$ from \eqref{eq:delta}, and consider the composite
\[\begin{tikzcd}[column sep=27pt]
    \Z_{\ell,d} \arrow[tail]{r}{i_*} &
    \pi_{d-2\ell}(\Imm_\partial(\D^\ell,X),u) \arrow{r}{\delta_{\Imm}} &
    \pi_{d-2\ell}\big(\Imm_\partial(\D^\ell, X), \Emb_\partial(\D^\ell, X),u\big)
    \arrow[tail, two heads]{r}{\Da} & 
    \faktor{\Z[\pi_1X]}{\relations_{\ell,d}}
\end{tikzcd}
\]
\begin{prop}\label{prop:r(1)}
    This composite takes $1\in \Z_{\ell,d}$ to the class of the unit $1$ in $\Z[\pi_1X]/\relations_{\ell,d}$.
\end{prop}
\begin{proof}
    The map $i_*$ for any $X$ factors through the one for $X=\D^d$, so it suffices to consider that case. Then both $i_*$ and $\Da$ are isomorphisms with $\Z_{\ell,d}=\Z/\relations_{\ell,d}$. By definition, $i_*(1)$ is the class of any map $\tau\colon\S^{d-2\ell}\to\Imm_\partial(\D^\ell,\D^d)$ whose Smale--Hirsch derivative $\deriv_u\circ\tau\colon\S^{d-2\ell}\to\Omega^\ell V_\ell(\D^d)\simeq \Omega^\ell V_\ell(d)$ is a generator of $\pi_{d-\ell}V_\ell(d)\cong \Z_{\ell,d}$ of the Stiefel manifold (see Corollary~\ref{cor:Stiefel}).
    
    Let us describe one such $\tau$. Firstly, for parameters $(\vec{t},s)\in \I^{d-2\ell-1}\times \I\cong\D^{d-2\ell}$ of the upper hemisphere of $\S^{d-2\ell}$ let $\tau(\vec{t},s)\coloneqq \realmap(1)_{\vec{t},1-s}$ be the time-reversed path of immersed disks from the previous proof. Recall that this drags a piece of $u$ to the position $\alpha_N$, and then uses $(\ell+1)$-disks foliating the meridian ball $\ol{\mu}(\ball^{d-\ell})$ to slide $\alpha_N$ into an embedded $\ell$-disk $\realmap(1)_{\vec{t}}\subseteq\mu(\S^{d-\ell-1})$. For parameters in the lower hemisphere of $\S^{d-2\ell}$ we now describe how to undo disks $\realmap(1)_{\vec{t}}$ by an isotopy, so it will immediately follow that $\Da(i_*(1))=\Da([\tau])=\Da\circ\realmap(1)=1$.
    
\begin{figure}[!htbp]
    \centering
    \includestandalone[mode=buildmissing,width=0.97\linewidth]{Figures-highdLBT/fig-realmap-isotopy}
    \caption{The present slice $\D^3\times\{0\}\subseteq\D^d$ contains an arc $c\subseteq\realmap(1)_{\vec{t}_{\mathbf{x}}}$, and a 2-disk $C\subseteq\ol{\mu}(\ball^{\ell+1}_{\mathbf{x}})$. In the second picture $c$ is isotoped so that now the subarc $\alpha_S$ can be slid across $C$ without creating any double points.}
    \label{fig:realmap-isotopy}
\end{figure}
    Observe that in the foliation of $\ol{\mu}(\ball^{d-\ell})$ there is a unique $(\ell+1)$-disk $\ol{\mu}(\ball^{\ell+1}_{\mathbf{x}})$ which contains $\mathbf{x}$, the only double point in the homotopy. We can pick coordinates so that the intersection of this disk with the ``present'' slice is a $2$-disk $\ol{\mu}(\ball^2_{\mathbf{x}})=\ol{\mu}(\ball^{\ell+1}_{\mathbf{x}})\cap\D^3\times\{\vec{0}\}\subseteq\D^d$ with $\partial\ol{\mu}(\ball^2_{\mathbf{x}})=\alpha_N\cup_\partial \alpha_S$ as on the left of Figure~\ref{fig:realmap-isotopy}. In particular, $\realmap(1)_{\vec{t}_{\mathbf{x}}}$ contains $\alpha_S$.

    Now, let us first isotope the arc $\realmap(1)_{\vec{t}_x}\cap\D^3\times\{\vec{0}\}$ as in Figure~\ref{fig:realmap-isotopy}: we isotope the front guiding arc and a part of $u$ in $\realmap(1)_{\vec{t}_x}$ by ``pulling them through'' $\ol{\mu}(\ball^2_{\mathbf{x}})$ (using the rotation around the vertical axis). Note that in the new position $\alpha_S$ can be slid to $\alpha_N$ across $\ol{\mu}(\ball^2_{\mathbf{x}})$ without creating any double points, and that from there we have an obvious isotopy to $u$ -- namely, ``by pulling tight''. More generally, the desired isotopy from each $\realmap(1)_{\vec{t}}\cap\D^3\times\{\vec{0}\}$ to $u$ consists of the same isotopy as in Figure~\ref{fig:realmap-isotopy}, then sliding across the corresponding $2$-disk $\ol{\mu}(\ball^{\ell+1}_{\vec{t}})\cap\D^3\times\{\vec{0}\}\subseteq\D^d$ to get to position $\alpha_N$, and then pulling it tight.
    
    Finally, we make this into an isotopy of the whole family $\realmap(1)_{\vec{t}}$. Firstly, we``taper off'' in the remaining $d-3$ dimensions the isotopy of the guiding arc performed in the present slice; this is a standard procedure using smooth bump functions. From there, we simply use the isotopies across $\ol{\mu}(\ball^{\ell+1}_{\vec{t}})$ as before.
\end{proof}

\subsubsection{Proof of Theorem~\ref{thm:Dax-final}}
    The last proposition implies $1\in\im(\Da\circ\delta_{\Imm})$. We can use any section of $p_u\colon\pi_{d-2\ell}(\Imm_\partial(\D^\ell,X),u)\sra\pi_{d-\ell}X$ to define a group homomorphism
    \begin{equation}\label{eq:md-def}
    \begin{tikzcd}[column sep=small]
        \md_u\colon\pi_{d-\ell}X\arrow{r}{} & \pi_{d-2\ell}(\Imm_\partial(\D^\ell,X),u)\arrow{rr}{\Da\circ\delta_{\Imm}} &&
        \faktor{\Z[\pi_1X]}{\langle1,\relations_{\ell,d}\rangle}\cong\faktor{\Z[\pi_1X\sm1]}{\relations_{\ell,d}}.
    \end{tikzcd}
    \end{equation}
    By construction the value $\md_u([f])$ is computed by lifting $f\colon(\I^{d-\ell},\partial \I^{d-\ell})\to (X, u(-1))$ to any family $F\colon \I^{d-2\ell}\to\Imm_\partial(\D^\ell,X)$ (in this case the entire $\partial \I^{d-\ell}$ goes to $u$), calculating its Dax invariant 
    \[
    \Da(F)=n(f)\cdot 1+\md_u(f)\in\faktor{\Z[\pi_1X]}{ \relations_{\ell,d}},
    \]
    and disregarding the trivial group elements $n(f)$.
    
    Thus $\im(\Da\circ\delta_{\Imm})=\langle1,\md_u(\pi_{d-\ell}X)\rangle$, finishing the proof.
\hfill\qed

\section{On homotopy groups of spaces of \texorpdfstring{$\e$}{e}-augmented disks, and of disks with a dual}\label{sec:e-disks}
%%%%%%%%%%%%%%%%%%%%%
Recall that the space of $\e$-augmented disks $\Emb_{\partial^\e}^\e(\D^\ell,X)$ (previously denoted $\Emb_{u_0^\e}^\e(\D^{k-1},X)$ for $\ell=k-1$) consists of embeddings $\D^\ell\times[0,\e]\hra X$ that agree on $(\S^{\ell-1}\times [0,\e])\times[0,\e]\subseteq\D^\ell\times[0,\e]$ with one such $u^\e$ (previously denoted $u_+^\e$). In other words, their boundary condition is $u_0^\e\coloneqq u^\e|_{(\S^{\ell-1}\times [0,\e])\times[0,\e]}$. Note that whereas $\partial^\e$ records, as before, a stronger boundary condition $u|_{\S^{\ell-1}\times[0,\e]}$ along a collar of $\partial\D^\ell$, the superscript $\e$ reflects additional structure, the $\e$-augmentation.

This additional structure has a fairly simple homotopy type that just reflects a normal vector field along an $\ell$-disk. In other words, the fiber of the forgetful map $\ev_0\colon\Emb_{\partial^\e}^\e(\D^\ell,X)\to \Emb_{\partial^\e}(\D^\ell,X)$
agrees with the analogous fiber for immersions, or equivalently frame bundles, see Section~\ref{subsec:forg-augm}. Using this and results about frame bundles from Appendix~\ref{app:frame-bundles}, in Section~\ref{subsec:ribbons} we extend Theorem~\ref{thm-intro:Dax} to the space $\Emb_{\partial^\e}^\e(\D^\ell,X)$. Finally, in Section~\ref{subsec:proof-of-ThmD} we combine this with our Space Level Light Bulb Theorem~\ref{thm:anyD} to prove Theorem~\ref{thm-intro:combined}.

\subsection{Forgetting augmentations}\label{subsec:forg-augm}

\begin{prop}\label{prop:normal-v-field}
    The space $\Emb_{\partial^\e}^\e(\D^\ell,X)$ is homotopy equivalent to the space $\Emb_{\partial^\e}^\uparrow(\D^\ell,X)$ of neat embeddings $\D^\ell\hra X$ equipped with a normal vector field (a nonvanishing section of the normal bundle).
\end{prop}
\begin{proof}
    Firstly, we claim that there is a commutative diagram of fibration sequences
\[\begin{tikzcd}
    \ev_0^{-1}(u) \arrow{r} \arrow{d}{} &   
    \Emb_{\partial^\e}^\e(\D^\ell,X) \arrow{r}{\ev_0} \arrow{d}{\deriv^\uparrow} & 
    \Emb_{\partial^\e}(\D^\ell,X) \arrow[equal]{d}\\
    \Gamma_{\partial^\e}\big(\S\nu u\big) \arrow{r}&   
    \Emb_{\partial^\e}^\uparrow(\D^\ell,X) \arrow{r}{pr^\uparrow} & 
    \Emb_{\partial^\e}(\D^\ell,X).
\end{tikzcd}
\]
    where $\Gamma_{\partial^\e}\big(\S\nu u\big)=(pr^\uparrow)^{-1}(u^\uparrow)$ is the space of those sections of the unit sphere bundle $\S\nu u$ of the normal bundle of our basepoint $u\colon\D^\ell\hra X$ that agree with the basepoint $u^\uparrow\coloneqq\deriv^\uparrow(u^\e)$ on a collar of $\partial\D^\ell$.
        Indeed, both $\ev_0$ and $pr^\uparrow$ are fibrations by Theorem~\ref{thm:Cerf-II}, and for a fixed Riemannian metric on $X$, the unit derivative along $\D^\ell\times\{0\}$ in the direction of $[0,\e]$ is a map $\deriv^\uparrow$ between total spaces. Once we show that its restriction to fibers  $\deriv^\uparrow\colon\ev_0^{-1}(u)\to \Gamma_{\partial^\e}\big(\S\nu u\big)$
   are homotopy equivalences, the result will follow.

    A homotopy inverse $\Exp_{u}$ of $\deriv^\uparrow$ comes from identifying the total space of $\nu u$ with a tubular neighborhood of $u$ via a scaled exponential map: for a unit normal vector field $\xi$  along $u$ define $\Exp_{u}(\xi)\colon\D^\ell\times[0,\e]\hra X$ by $\Exp_{u}(\xi)(v,s)\coloneqq \exp(s\cdot\xi(v))$. Here we may assume by compactness of $X$ that $\e$ is smaller than the injectivity radius of the chosen metric.
    We have $\deriv^\uparrow\circ \Exp_{u}=\Id$ by construction. To define a homotopy from $\Exp_{u}\circ\deriv^\uparrow$ to the identity in the space $\ev_0^{-1}(u)$ we observe that by continuously scaling the parameter and using the exponential map, it suffices to construct such a homotopy $K_t$ for $K\colon \D^\ell \times [0,\e] \hra \nu u$. This is given by $K_t(v,s)\coloneqq \frac{1}{t}K(v, t\cdot s)$ for $t\in [0,1]$, since $K_0$ is indeed the usual description of the normal derivative of $K=K_1$ at $(v,0)$.
\end{proof}

Recall that $V_\ell(X)$ denotes the $\ell$-frame bundle of the tangent bundle of $X$, and that the unit derivative defines a map $\deriv_u\colon\Emb_{\partial^\e}(\D^\ell,X)\to\Omega^lV_l(X)$, see Corollary~\ref{cor:immersions}. We similarly have a map
\[
    \deriv_{u^\uparrow}\colon\Emb_{\partial^\e}^\uparrow(\D^\ell,X)\ra \Map_{\partial}(\D^\ell, V_{\ell+1}(X);\deriv(u^\e))\ra\Omega^\ell V_{\ell+1}X,
\]
% where for an $\e$-augmented disk $K\colon\D^\ell\times[0,\e]\hra X$ we have $\deriv_{u^\e}(K)\coloneqq-\deriv(u^\e)\cup_\partial\deriv(K)$ and $\deriv(K)\colon\D^\ell\to V_{\ell+1}(X)$ is at $v\in\D^\ell$ given by derivatives of $K$ at $(v,0)$ in all $\ell$ tangent directions plus in the $\e$-direction. 
which to a $k$-disk $K$ with a normal vector field assigns $\deriv_{u^\uparrow}(K)\coloneqq-\deriv(u^\uparrow)\cup_\partial\deriv(K)$, where $\deriv(K)$ is given by derivatives of $K$ at $(v,0)$ in all $\ell$ tangent directions, followed by the normal vector.

\begin{prop}\label{prop:forg-augm}
   There is a commutative diagram of fibration sequences
    \[\begin{tikzcd}
        \Omega^\ell\S^{d-\ell-1} \arrow{r}{n} \arrow[equal]{d} &   
        \Emb_{\partial^\e}^\uparrow(\D^\ell,X) \arrow{r}{pr^\uparrow} \arrow{d}{\deriv_{u^\uparrow}} & 
        \Emb_{\partial^\e}(\D^\ell,X) \arrow{d}{\deriv_{u}}\\
        \Omega^\ell\S^{d-\ell-1} \arrow{r}{i_{\ell+1}} &   
        \Omega^\ell V_{\ell+1}(X) \arrow{r}{pr_{\ell+1}} & 
        \Omega^\ell V_\ell(X).
    \end{tikzcd}
    \]
    In particular, we can choose the connecting map $\delta_{\ev_0}$ for the top fibration sequence in the diagram of Proposition~\ref{prop:forg-augm} to be the composite of $\Omega\deriv_{u}$ with the connecting map $\delta_{pr_{\ell+1}}$ for the bottom sequence.
\end{prop}
\begin{proof}
    A trivialization of the sphere bundle $\S\nu u\cong \D^\ell \times \S^{d-\ell-1}$ induces a homeomorphism $h$ between the space $\Gamma(\S\nu u)$ of all its sections and the space $\Map(\D^\ell,\S^{d-\ell-1})$. This identifies the basepoint $u^\uparrow=\deriv^\uparrow(u^\e)$ with some $u'\colon\D^\ell\to\S^{d-\ell-1}$, so that the subspace $\Gamma_{\partial^\e}(\S\nu u)$ of sections that agree near boundary with $u^\uparrow$ is homotopy equivalent to the subspace $\partial^{-1}(\partial u')\subseteq\Map(\D^\ell,\S^{d-\ell-1})$,
    the fiber over $\partial u'$ of the restriction map $\partial\colon\Map(\D^\ell,\S^{d-\ell-1})\to\Map(\S^{\ell-1},\S^{d-\ell-1})$. Moreover, we can identify this space $\partial^{-1}(\partial u')$ of maps rel.\ boundary with a $(k-1)$-fold loop space as in Lemma~\ref{lem:U-cup-is-he}, to obtain the map
\begin{equation}\label{eq:exp-map}
    \begin{tikzcd}[column sep=large]
        n\colon\Omega^\ell\S^{d-\ell-1} \arrow{r}{u'\vee\bull}[swap]{\sim} & 
        \partial^{-1}(\partial u') \rar{h}[swap]{\cong} & \Gamma_{\partial^\e}\big(\S\nu u\big)\hra \Emb_{\partial^\e}^\uparrow(\D^\ell,X)
        % \arrow{r}{\Exp_{u}} & \ev_0^{-1}(u).
    \end{tikzcd}
\end{equation}
    % This sends $\xi\colon\S^\ell\to\S^{d-\ell-1}$ to the $\e$-augmented disk $\Exp_{u}(u'\vee\xi)$, where $\Exp_u$ is the homotopy inverse of $\deriv^\uparrow$ from the previous proof. Thus, $\Exp_{u}(u'\vee\xi)$ is given as $u^\e$ on a neighborhood $*\in\partial X$, and otherwise it comes from integrating the vector field $\xi$. More precisely, write  $\D^\ell=\D^\ell_1\cup\D^\ell_2$ as the union of two halves, with $\D^\ell_1$ the half containing the basepoint $e_{\partial\D^\ell}$, and $u_i=u|_{\D^\ell_i}$. Let $u_1^\e$ be $u^\e$ scaled down to the image of $u_1$. Then the map \eqref{eq:exp-map} is given by $\xi\mapsto\Exp_{u}(u'\vee\xi)=u_1^\e\cup\Exp_{u_2}(\xi)$.
    which we use as the top left arrow in the diagram of the statement. For basepoints at the bottom we use the images of $u^\uparrow$ and $u$ under the vertical derivative maps.
    The square on the right clearly commutes: forgetting normal vector corresponds to forgetting the last vector in an $(\ell+1)$-frame. The square on the left also commutes, since the map $S\mapsto-\deriv(u^\uparrow)\cup_\partial\deriv(h(u'\vee S))=\deriv(-u^\uparrow)\cup_\partial \deriv(u^\uparrow\cup h(S))$ is homotopic to the inclusion $S\mapsto i_{\ell+1}(S)$ of the fiber of $pr_{\ell+1}$ over the basepoint $\deriv_{u}(u)=-\deriv(u)\cup_\partial\deriv(u)$.
\end{proof}
\begin{remark}\label{rem:proof-ThmA}
    Combining Theorem~\ref{thm-intro:LBT-deloop} with Proposition~\ref{prop:forg-augm} (for $\ell=k-1$) we obtain a proof of Theorem~\ref{thm-intro:LBT-fib-seq}: in the setting with a dual there is a fibration sequence
\[\begin{tikzcd}
      \Omega^{k}\S^{d-k} \arrow{rr}{\Exp\circ\amb_\U} &&
      \Emb_s(\D^k,M) \arrow{rr}{\foliate_\U\coloneqq\ev_0\circ\foliate^\e_\U} && \Omega\Emb_\partial(\D^{k-1},M_G) \arrow{r}{\delta_{\ev_0}} & \Omega^{k-1}\S^{d-k}.
      \end{tikzcd}
\]
    In particular, $\foliate_\U$ is a homotopy equivalence if $d=k$ or $d={k}{+}{1}\geq3$. If $d>2k$, then $\pi_0\foliate_\U$ is a bijection.
\end{remark}
% Note that the target of $\deriv_{u^\uparrow}$ can be viewed as the space of $\e$-augmented immersions.

%%%%%%%%%%%%%%%%%%%%%%%%%%%%%%%%%%%%%
%%%%%%%%%%%%%%%%
\subsection{Homotopy groups of spaces of \texorpdfstring{$\e$}{e}-augmented disks} 
\label{subsec:ribbons}

We have the following analogue of Theorem~\ref{thm-intro:Dax} for $\e$-augmented disks. Note that we assume $d-2\ell\geq1$ for simplicity and as the case $d-2\ell=0$ does not arise in Theorem~\ref{thm-intro:combined}.

\begin{theorem}\label{thm:Dax-augmented}
  Assume $1\leq\ell\leq d-3$ and $d-2\ell\geq1$, and $X$ is a $d$-manifold with boundary, $\pi\coloneqq\pi_1X$. For $1\leq n\leq d-2\ell-2$ there are isomorphisms $p_u\colon\pi_n(\Emb_{\partial^\e}^\e(\D^\ell,X),u^\e)\cong\pi_{n+\ell}X$, and a short exact sequences of groups:
\[\begin{tikzcd}[row sep=small,ampersand replacement=\&]
\begin{drcases}
    \ell=1, &\faktor{\Z[\pi\sm1]}{\md_u(\pi_{d-1}X)}\\
    \ell\geq2, d-\ell\text{ odd}, & \faktor{\Z[\pi\sm1]}{\langle g+g^{-1}\rangle\oplus\md_u(\pi_{d-\ell}X)}\\
    \ell\geq2, d-\ell\text{ even}, & \faktor{\Z[\pi]}{\langle g-g^{-1}\rangle\oplus\md^\e_{u^\e}(\pi_{d-\ell}X)}
\end{drcases} \arrow[tail]{r}{\partial\realmap^\e} 
        \& \pi_{d-2\ell-1}(\Emb_{\partial^\e}^\e(\D^\ell,X),u^\e)
        \arrow[two heads]{d}{(\eta_W\circ\deriv_{u^\e})\oplus p_u}\\
        \& \hspace{-1.8cm}
\begin{drcases}
     d-\ell\text{ odd}, & \Z\\
     d-\ell\neq2,4,8\text{ even}, & \Z/2
\end{drcases}\oplus\pi_{d-\ell-1}X.\hspace{1cm}
     \end{tikzcd}
\]
  Moreover, for $d-\ell$ odd, $(\eta,\pi_{d-2\ell-1}\ev_0)\colon\pi_{d-2\ell-1}\Emb_{\partial^\e}^\e(\D^\ell,X)\cong \Z\times\pi_{d-2\ell-1}\Emb_\partial(\D^\ell,X)$ is an isomorphism, so any $\S^{d-2\ell-1}$-family of embedded disks has $\Z$ many $\e$-augmentations. On the other hand, in the even case, the number of augmentations is twice the order of the element $1$ in $\Z[\pi]/\md^\e_{u^\e}(\pi_{d-\ell}X)$.
\end{theorem}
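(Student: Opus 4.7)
The plan is to combine three ingredients: the fibration sequence
$$\Omega^\ell\S^{d-\ell-1} \xrightarrow{n} \Emb_{\partial^\e}^\e(\D^\ell,X) \xrightarrow{\ev_0} \Emb_\partial(\D^\ell,X)$$
from Propositions~\ref{prop:normal-v-field} and~\ref{prop:forg-augm}; the description of $\pi_*\Emb_\partial(\D^\ell,X)$ given by Theorem~\ref{thm:Dax-final}; and the comparison with the frame bundle fibration $\S^{d-\ell-1}\to V_{\ell+1}(X)\to V_\ell(X)$ provided by the right square of Proposition~\ref{prop:forg-augm}, with Proposition~\ref{prop:TX} of Appendix~\ref{app:frame-bundles} supplying the frame bundle homotopy. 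In the long exact sequence of the first fibration, the fiber group $\pi_n\Omega^\ell\S^{d-\ell-1}=\pi_{n+\ell}\S^{d-\ell-1}$ vanishes whenever $n+\ell<d-\ell-1$, i.e.\ for $n\le d-2\ell-2$, and likewise for $\pi_{n-1}$. Hence $(\ev_0)_*$ is an isomorphism in this range, which composed with $p_u$ of Theorem~\ref{thm-intro:Dax} yields the first claim.

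For the critical degree $n=d-2\ell-1$, using $\pi_{d-\ell-2}\S^{d-\ell-1}=0$ the sequence collapses to
$$\pi_{d-2\ell}\Emb_\partial\xrightarrow{\partial^\e}\Z\xrightarrow{n_*}\pi_{d-2\ell-1}\Emb_{\partial^\e}^\e\xrightarrow{(\ev_0)_*}\pi_{d-2\ell-1}\Emb_\partial\to 0,$$
which I will splice with the short exact sequence of Theorem~\ref{thm:Dax-final} to produce the stated six-term extension, once $\im\partial^\e$ is identified. The identification proceeds through the right square of Proposition~\ref{prop:forg-augm}, which factors $\partial^\e$ as the loop of a natural map into the frame bundle followed by the frame bundle boundary; by Proposition~\ref{prop:TX} this amounts to an Euler number evaluation of a rank $(d-\ell)$ bundle over $V_\ell(X)$. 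For $d-\ell$ odd the Euler class is $2$-torsion in the relevant integer homotopy group, so $\partial^\e=0$; the $\Z$ then splits off $\pi_{d-2\ell-1}\Emb_{\partial^\e}^\e$ as a direct summand detected by $\eta$, yielding the splitting in the final sentence of the theorem. For $d-\ell$ even with $d-\ell\notin\{2,4,8\}$, Proposition~\ref{prop:r(1)}'s identification $\Da\circ\delta_{\Imm}(1)=1$ combined with the frame bundle computation forces $\im\partial^\e=2\Z$: the Smale--Hirsch generator of $\Z_{\ell,d}\subseteq\pi_{d-2\ell}\Imm_\partial$ produces a defect of $\pm 2$ windings of the normal frame, so the cokernel $\Z/2$ appears in the quotient detected by $\eta_W$, while $n_*(1)\in\pi_{d-2\ell-1}\Emb_{\partial^\e}^\e$ lands in the kernel and is identified with the unit $1\in\Z[\pi]$. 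This is exactly why the $\ell\ge 2$, $d-\ell$ even case of the stated kernel uses $\Z[\pi]/\cdots$ with $\md^\e$ (enlarged by $\langle 1\rangle$) rather than $\Z[\pi\sm 1]/\cdots$. The exceptional dimensions $\{2,4,8\}$, where $\S^{d-\ell-1}$ is an $H$-space and the Euler number need not determine the splitting, are excluded from the statement.

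The realization map $\partial\realmap^\e$ is defined by $\e$-augmenting $\partial\realmap$ of Theorem~\ref{thm:realization-map}: the normal vector field along each $\realmap(g)_t$ is inherited from $u^\e$ on a collar and extended over the meridional sphere via a fixed trivialization of its unit normal bundle, and $\md^\e_{u^\e}$ is the analogue of $\md_u$ using $\e$-augmented lifts (enlarged by $\langle 1\rangle$ in the even case to record the absorbed $n_*(1)$). The main obstacle will be the identification of $\im\partial^\e$ in the even case and the concomitant splitting: matching Proposition~\ref{prop:r(1)} with the frame bundle Euler computation, and verifying that $n_*(1)$ is detected by $\Da\circ\foliate^\e_\U$ as the unit $1\in\Z[\pi]$, will require careful bookkeeping through both the Dax invariant and the frame bundle long exact sequence; the further control needed near (but excluding) the $H$-space dimensions $d-\ell\in\{2,4,8\}$ is what limits the range of the result.
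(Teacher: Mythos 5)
The stable range claim ($p_u$ an isomorphism for $n\le d-2\ell-2$) and your treatment of the $d-\ell$ odd case are sound and essentially the paper's argument: the paper assembles the same two fibrations (the augmentation-forgetting map $\ev_0$ and the frame-bundle projection $pr_{\ell+1}$, compared via Proposition~\ref{prop:forg-augm}) into diagram \eqref{diag:ribbons}, splices with Theorem~\ref{thm:Dax-final}, and in the odd case the relevant connecting map vanishes for exactly the rank-parity reason you give, after which $\eta_W$ from Proposition~\ref{prop:TX} splits off the $\Z$.

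The even case, however, contains a genuine error, and it is precisely at the step you flag as the main obstacle. Your claim $\im\partial^\e=2\Z$ is false in general. By Proposition~\ref{prop:forg-augm} the connecting map $\delta_{\ev_0}$ on $\pi_{d-2\ell}\Emb_\partial(\D^\ell,X)$ is $\mathbf{e}^d_\ell\circ\pi_{d-2\ell}\deriv_u$, and the image of $\pi_{d-2\ell}\deriv_u$ on \emph{embedded} families is only $\ker(\delta_{\Imm})\subseteq\pi_{d-\ell}V_\ell(X)$, not the whole group. The Euler-number-$2$ class you invoke is the image of the Stiefel generator $i_*(1)$, but by Proposition~\ref{prop:r(1)} it satisfies $\Da\circ\delta_{\Imm}(i_*(1))=1\neq0$, so it does \emph{not} lift to $\pi_{d-2\ell}\Emb$ unless the unit lies in $\md^\e_{u^\e}(\pi_{d-\ell}X)$. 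Hence $\im(\delta_{\ev_0})=\mathbf{e}^d_\ell(\ker\delta_{\Imm})=2m\Z$, where $m$ is the (possibly infinite) order of $1$ in $\Z[\pi]/\md^\e_{u^\e}(\pi_{d-\ell}X)$ --- which is exactly the last sentence of the theorem, and which your argument would contradict by forcing exactly two augmentations in every even case. Concretely, for $X=\D^9$, $\ell=3$ (so $d-\ell=6$ even, all hypotheses hold) one has $\pi_6 V_3(\D^9)\cong\Z$ generated by $i_*(1)$ with Dax defect $1$, so $\ker\delta_{\Imm}=0$ and $\im(\delta_{\ev_0})=0$, giving $\Z$ many augmentations. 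Relatedly, your description of $\md^\e_{u^\e}$ as ``$\md_u$ enlarged by $\langle1\rangle$'' is not its definition: in \eqref{eq:md-e-def} one lifts a class of $\pi_{d-\ell}X$ to $\im(\pi_{d-\ell}pr_{\ell+1})\subseteq\pi_{d-\ell}V_\ell(X)$, i.e.\ to an immersed family with vanishing normal Euler defect (one admitting an augmentation), and records the full Dax invariant \emph{including} the coefficient of the unit; the identity $\md^\e=\langle1\rangle\oplus\md$ holds only in the dual setting of Theorem~\ref{thm-intro:combined}. In the correct argument the $\Z/2$ in the target arises from $\pi_{d-\ell-1}V_{\ell+1}(X)\cong\Z/2\oplus\pi_{d-\ell-1}X$ via the surjection $\pi_{d-2\ell-1}\deriv_{u^\e}$, with kernel $\Z[\pi]/\im(\delta_{\Imm^\e})$ and $\im(\delta_{\Imm^\e})$ identified with $\md^\e_{u^\e}(\pi_{d-\ell}X)$ using that $p\colon\im(\pi_{d-\ell}pr_{\ell+1})\to\pi_{d-\ell}X$ is an isomorphism --- not from a purported $2\Z$ image of $\partial^\e$ at the embedding level.
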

The homomorphisms $\md^\e_{u^\e}$ and  $\eta_W$ will be defined in the course of the proof, see \eqref{eq:md-e-def}. The map $\partial\realmap^\e$ is the family of disks $\partial\realmap$ with suitable $\e$-augmentations; $p_u$ is the same composite of a forgetful map and concatenation with $-u$ as in~\eqref{eq:imm-htpy-seq}.

To prove Theorem~\ref{thm:Dax-augmented} we consider the diagram of fibration sequences from Proposition~\ref{prop:forg-augm}.
% $\deriv_{u^\e}\colon\Emb_{\partial^\e}^\e(\D^1,X)\to \Omega V_2(X)$ from Proposition~\ref{prop:forg-augm}, which said that $\deriv_{u^\e}$ takes the fibration $\ev_0\colon\Emb_\partial^\e(\D^{1},X)\to\Emb_\partial(\D^1,X)$, with fiber $\Omega^{k-1}\S^{d-k}$, to the looping of the fibration $p_2\colon V_2(X)\to V_1(X)$. Recall that $V_k(X)$ denotes the space of $k$-frames of $TX$ and we have homotopy equivalences $\Imm_\partial(\D^{1},X)\simeq\Omega V_1(X)=\Omega\S(X)\simeq \Omega(\S^{d-1}\times X)$ (see Lemma~\ref{lem:immersions}). 
Taking the long exact sequences in homotopy groups of these fibrations implies that $\deriv_{u^\uparrow}$ is $(d-2\ell)$-connected (as is $\deriv_u$), so is $\deriv_{u^\e}\coloneqq\deriv_{u^\uparrow}\circ\deriv^\uparrow$, and gives the following commutative diagram with exact rows and columns:
\begin{equation}\label{diag:ribbons}
\begin{tikzcd}[column sep=0.34cm]
    &&& \pi_{d-\ell}\S^{d-\ell-1} \arrow{rr}\arrow{d} 
    &&  \pi_{d-\ell}V_{\ell+1}(X) \arrow{rr}{\pi_{d-\ell}pr_{\ell+1}}\arrow{d}{\delta_{\Imm^\e}} 
    &&  \pi_{d-\ell}V_\ell(X) \arrow{d}{\delta_{\Imm}}
\\
    &&& 0\arrow{d}\arrow[]{rr} 
    && \pi_{d-2\ell}\big(\Omega^\ell V_{\ell+1}(X),\Emb^\e\big)\arrow{rr}{\cong}\arrow{d}{\partial^\e} 
    && \pi_{d-2\ell}\big(\Omega^\ell V_\ell(X),\Emb\big)\arrow{d}{\partial} 
\\
    \pi_{d-2\ell}\Emb\arrow[]{rrr}{\delta_{\ev_0}}\arrow{d}[swap]{\pi_{d-2\ell}\deriv_u} 
    &&& \pi_{d-\ell-1}\S^{d-\ell-1} \arrow{rr}{\pi_{d-2\ell-1}\Exp_u}\arrow[equals]{d} 
    &&  \pi_{d-2\ell-1}\Emb^\e \arrow[two heads]{rr}{\pi_{d-2\ell-1}\ev_0}\arrow[two heads]{d}{\pi_{d-2\ell-1}\deriv_{u^\e}} 
    &&  \pi_{d-2\ell-1}\Emb \arrow[two heads]{d}{\pi_{d-2\ell-1}\deriv_u} 
\\
    \pi_{d-\ell}V_\ell(X)\arrow{rrr}{\delta_{pr_{\ell+1}}} 
    &&& \pi_{d-\ell-1}\S^{d-\ell-1}\arrow[]{rr}{\pi_{d-\ell-1}i_{\ell+1}} 
    && \pi_{d-\ell-1}V_{\ell+1}(X) \arrow[two heads]{rr}{\pi_{d-\ell-1}pr_{\ell+1}} 
    && \pi_{d-\ell-1}V_\ell(X)
\end{tikzcd}
\end{equation}
We abbreviate $\Emb\coloneqq\Emb_{\partial^\e}(\D^\ell,X)$ and $\Emb^\e\simeq\Emb_{\partial^\e}^\e(\D^\ell,X)$, and $\delta_f$ is the connecting map for a fibration $f$. Recall the isomorphism $\Da\colon\pi_{d-2\ell}(\Omega^\ell V_\ell(X),\Emb)\to \Z[\pi_1X]/\relations_{\ell,d}$ and its inverse $\realmap$ from Section~\ref{subsec:Dax} (using that $\Imm_{\partial^\e}(\D^\ell,X)\simeq\Omega^\ell V_\ell(X)$ by Corollary~\ref{cor:immersions}).
Thus, we need to compute $\pi_{d-\ell-1}V_{\ell+1}(X)$ and
the kernel of the surjection $\pi_{d-2\ell-1}\deriv_{u^\e}$. To this end, we study homotopy groups of frame bundles, and in Proposition~\ref{prop:TX} show that for $d-2\ell-1>0$:
\begin{equation}\label{eq:Vl+1X}
\begin{tikzcd}[ampersand replacement=\&]
    \coker(\delta_{pr_{\ell+1}})\cong\Z_{\ell+1,d}
    \arrow[tail]{rr}{i_{\ell+1}} 
    \&\& \pi_{d-\ell-1}V_{\ell+1}(X) \arrow[two heads]{rr}{pr_{\ell+1}} 
    \&\& \pi_{d-\ell-1}V_\ell(X)\cong\pi_{d-\ell-1}X
\end{tikzcd}
\end{equation}
where $\Z_{\ell+1,d}=\Z$ if $d-\ell-1$ even (i.e.\ $\im(\delta_{pr_{\ell+1}})=0$), and $\Z_{\ell+1,d}=\Z/2$ if $d-\ell-1$ odd ($\im(\delta_{pr_{\ell+1}})=2\Z$). 

Moreover, there are splittings $\eta_W$ of $\pi_{d-\ell-1}i_{\ell+1}$ for every $d-\ell\neq2,4,8$, see Proposition~\ref{prop:TX}. Thus, in these cases we have the desired right hand side in Theorem~\ref{thm:Dax-augmented}:
\[
    \eta_W\oplus\pi_{d-\ell-1}pr_{\ell+1}\colon\pi_{d-\ell-1}V_{\ell+1}(X)\xrightarrow{\cong} \Z_{\ell+1,d} \oplus \pi_{d-\ell-1}X.
\]

\begin{proof}[Proof of Theorem~\ref{thm:Dax-augmented}]
    It remains to find $\ker(\pi_{d-2\ell-1}\deriv_{u^\e})$, which is the quotient of $\Z[\pi_1X]$ by $\im(\delta_{\Imm^\e})$.

    Assume first $d-\ell$ is odd, so $d-\ell-1$ is even. Since $\im(\delta_{pr_{\ell+1}})=0$ the map $\pi_{d-\ell}pr_{\ell+1}$ is surjective. Looking at the top right of \eqref{diag:ribbons}, it follows that $\im(\delta_{\Imm})\cong\im(\delta_{\Imm^\e})$, so $\pi_{d-2\ell-1}\ev_0$ induces an isomorphism $(\ev_0)_*\colon\ker(\pi_{d-2\ell-1}\deriv_{u^\e})\xrightarrow{\cong}\ker(\pi_{d-2\ell-1}\deriv_{u})$. The latter is isomorphic to $\Z[\pi_1X\sm1]/\relations_{\ell,d}\oplus\md_u (\pi_{d-1}X)$ by Theorem~\ref{thm-intro:Dax}, so we get the desired exact sequence in the theorem in this case. The maps are $\partial\realmap^\e\coloneqq (\ev_0)_*^{-1}\partial\realmap$, and $\eta_W\circ\pi_{d-2\ell-1}\deriv_{u^\e}$ and $p_u\circ\pi_{d-2\ell-1}\ev_0$. 

    Furthermore, in this case $\pi_{d-2\ell-1}\Exp_u$ is injective (since $\pi_{d-\ell-1}i_{\ell+1}$ is), so $\eta_W\circ\deriv_{u^\e}$ is its left splitting. Therefore, we have the claimed isomorphism
    \[
        (\eta_W\circ\deriv_{u^\e})\oplus\pi_{d-2\ell-1}\ev_0\colon
        \;\pi_{d-2\ell-1}\Emb_\partial^\e(\D^1,X)\xrightarrow{\cong}\Z\oplus\pi_{d-2\ell-1}\Emb_\partial(\D^1,X).
    \]
    Now assume $d-\ell$ is even, so $d-\ell-1$ is odd. Since $\im(\delta_{pr_{\ell+1}})=2\Z$ and $\ker(\delta_{pr_{\ell+1}})=\im(\pi_{d-\ell}pr_{\ell+1})$ we have the following horizontal exact sequence
\[\begin{tikzcd}[row sep=1.7em]
    & \Z\arrow[tail]{d} \arrow{drr}{\cong}  
    & \\
    \im(\pi_{d-\ell}pr_{\ell+1})\arrow[tail]{r}\arrow{dr}[swap]{p} 
    & \pi_{d-\ell}V_\ell(X)\arrow[two heads]{rr}[near start]{\delta_{pr_{\ell+1}}}\arrow[two heads]{d}{\pi_{d-\ell}pr_\ell}
    &&  2\Z.  \\
    & \pi_{d-\ell}X & 
\end{tikzcd}
\]
    The vertical sequence is just \eqref{eq:Vl+1X} with the index $\ell+1$ replaced by $\ell$ (note $\Z_{\ell,d}=\Z$ for $d-\ell$ even).
    This implies that $p$ is an isomorphism, and we can define $\md^\e_{u^\e}$ as the composite
\begin{equation}\label{eq:md-e-def}
\begin{tikzcd}[column sep=small]
    \md^\e_{u^\e}\colon\pi_{d-\ell}X\arrow[tail]{r}{p^{-1}} & \im(\pi_{d-\ell}pr_{\ell+1})\subseteq \pi_{d-\ell}V_\ell(X)\cong\pi_{d-2l}(\Imm_\partial(\D^\ell,X),u)\arrow{rr}{\Da\circ\delta_{\Imm}} &&
    \Z[\pi_1X].
\end{tikzcd}
\end{equation}
    Cf.\ the definition of $\md_u$ in~\eqref{eq:md-def}.
    Then by construction we have $\im(\delta_{\Imm^\e})=\im(\md^\e_{u^\e})\subseteq\Z[\pi_1X]$. This gives the second claimed short exact sequence in the theorem, with the maps as before.

    Finally, note that in this case $(\ev_0)_*\colon\ker(\pi_{d-2\ell-1}\deriv_{u^\e})\sra\ker(\pi_{d-2\ell-1}\deriv_{u})$ is not an isomorphism in general, but has for the kernel the cyclic group generated by the class of $1\in\Z[\pi_1X]$ modulo $\md^\e_{u^\e}(\pi_{d-1}X)$. Taking the kernels in the bottom of~\eqref{diag:ribbons} we get the exact sequence $\ker(\ev_0)_*\hra\ker(\pi_{d-2\ell-1}\ev_0)\sra\Z/2$, so the cardinality of $\ker(\pi_{d-2\ell-1}\ev_0)=\Z/\im(\delta_{\ev_0})$ -- which is precisely the number of augmentations of an arc $u\in\pi_{d-2\ell-1}\Emb_\partial(\D^\ell,X)$ -- is equal to two times the mentioned order.
\end{proof}
\begin{remark}\label{rem:Exp-pic}
    The map $\pi_{d-2\ell-1}(\Exp_u)\colon\Z\to\pi_{d-2\ell-1}\Emb_{\partial^\e}^\e(\D^\ell,X)$ is on a generator given by ``integrating'' the $(d-\ell-1)$-family of unit normal vector fields to $u$, given by its meridian $\mu(\S^{d-\ell-1})$ at a point $p=u_+(x)$. See the proof of Proposition~\ref{prop:forg-augm}, Remark~\ref{rem:amb-Exp} and \cite[Fig.5.9]{KT-4dLBT}.
\end{remark}

%%%%%%%%%%%%%%%%
\subsubsection{The 3-dimensional case} \label{subsec:d=3}
We have so far considered $d\geq4$. However, for $d=3$ we still have an exact sequence comparing embedded to immersed arcs, which using Corollary~\ref{cor:immersions} translates to:
\[\begin{tikzcd}
    % \pi_1(\Emb_\partial(\D^1,X),u) \rar{\pi_1\incl} &
    \pi_1(\Imm_\partial(\D^1,X),u) \cong \Z\oplus\pi_2X 
    \rar{\delta_{\Imm}} & \pi_1^{rel}\rar
    & \pi_0\Emb_\partial(\D^1,X) \rar[two heads]{\pi_0\incl} & \pi_0\Imm_\partial(\D^1,X) \cong\pi_1X.
\end{tikzcd}
\]
    where an element of the set $\pi_1^{rel}\coloneqq\pi_1(\Imm_\partial(\D^1,X), \Emb_\partial(\D^1,X);u)$ is represented by a knot together with a path to $u$ through immersed arcs. Moreover, one still has a well-defined surjection $\Da\colon \pi_1^{rel}\sra\Z[\pi_1X]$, with a set-theoretic section $\realmap$, given by doing crossing changes along group elements. In particular, we can define an invariant of knots homotopic to $u$, namely
\[
    \Da_u\colon\KK(X,u)\coloneqq(\pi_0\incl)^{-1}[u]\sra\faktor{\Z[\pi_1X\sm1]}{\md_u(\pi_2X)}.
\]
    In \cite{K-Dax} the first author shows that this is the universal Vassiliev invariant of type $\leq1$ for knots in $X$. 
% One can also define $\realmap$ for $d=3$ as a set-theoretic map, see \cite{K-thesis}.
    
When $\partial u$ has a geometric dual, then $\pi_0\incl\colon\pi_1(X\cup_{\S^2}\D^3)\sra\pi_1X$ has trivial kernel $\KK(X;u)=0$, see Example~\ref{ex-intro:LBT-arcs}. Instead, there is a distinguished class $u_{tw}^G\in\pi_1\Emb_\partial(\D^1,X)$, given by ``swinging the lasso'' around the parallel push-off of the dual $G$ into $X$.

Now consider $\e$-augmented arcs for $d=3$ (equivalently, framed long knots). From~\eqref{diag:ribbons} we have an extension $\Z/\md^\e_{u^\e}(\pi_2X)\hra(\pi_0\ev_0)^{-1}[u]\sra\Z/2$, with $\Z/\md^\e_{u^\e}(\pi_2X)\cong\ker(\pi_1^{rel,\e}/\im(\delta_{\Imm^\e})\sra \pi_1^{rel}/\im(\delta_{\Imm}))$ is generated by the crossing change along $1\in\pi_1X$, and $\Z/2\cong\ker(\pi_1pr_2)$ by \eqref{eq:Vl+1X}. 

Interestingly, there are now only two distinct cases for this extension, depending on whether $\partial u$ has a geometric dual or not: $u$ has respectively either exactly two framings -- that is, $(\pi_0\ev_0)^{-1}[u]\cong\Z/2$, or countably many, $(\pi_0\ev_0)^{-1}[u]\cong\Z$. 
The first case is immediate from $\md^\e_{u^\e}(G)=1$ as in Example~\ref{ex:computing-md}. To see this more explicitly, the mentioned loop $u_{tw}^G$ can be extended to a path of $\e$-augmented arcs, whose start and end framings on~$u$ differ by $2$, so $\delta_{\ev_0}(u_{tw}^G)=2$. This is precisely the well-known light bulb trick for framed knots! To prove that in cases without a dual there is $\Z$ many framings, one approach would be to show that $1\in\Z[\pi_1X]/\md^\e_{u^\e}(\pi_2X)$ has infinite order; see \cite{framings2014} for another proof.

%%%%%%%%%%%%%%%%%%%%%%%%%%%%%%%%%%
\subsection{Homotopy groups of spaces of disks with a dual: proof of Theorem~\ref{thm-intro:combined}} \label{subsec:proof-of-ThmD}

We collect the results obtained so far in order to prove Theorem~\ref{thm-intro:combined}, concerning the space $\Emb_s(\D^k,M)$ of neat embeddings of the $k$-disk in a $d$-manifold $M$ such that $d-k\geq 2$, with the boundary condition $s\colon\S^{k-1}\hra \partial M$, which has a {framed geometric dual} $G\colon\S^{d-k-1}\hra\partial M$.

Firstly, Theorem~\ref{thm-intro:LBT-deloop} gives for all $n\geq0$ explicit ``ambient isotopy'' and ``$\e$-foliation'' isomorphisms
\[\begin{tikzcd}
    \pi_n\foliate^\e_\U\colon\pi_n\Emb_s(\D^k,M)\arrow[shift left=5pt]{r}[swap]{\cong}{} &\pi_{n+1}(\Emb_{u_0^\e}^\e(\D^{k-1},M_G),u_+^\e) \arrow[shift left=5pt]{l}{}\colon\pi_n\amb_\U
  \end{tikzcd}
\]
depending on the choice of a basepoint $\U\in\Emb_s(\D^k,M)$ (recall $M_G\coloneqq M\cup_{\nu G} h^{d-1}$ and $s=u_-\cup u_+$).
Secondly, Theorem~\ref{thm:Dax-augmented} gives for $n\leq d-2\ell-2$ isomorphisms $\pi_{n}(\Emb_{u_0^\e}^\e(\D^\ell,X),u^\e)\cong\pi_{n+1}X$ and an extension on $\pi_{d-2\ell-1}$ by a quotient of $\Z[\pi_1X]$. 
Combining these two results by putting $X\coloneqq M_G$ and $u\coloneqq u_+$ and $\ell=k-1$, we obtain isomorphisms
\[
    \pi_{n}\Emb_s(\D^k,M)\cong\pi_{n+2}M_G,\quad\text{for }n\leq d-2(k-1)-3=d-2k-1,
\]
and the extension:
% \begin{equation*}
%     \begin{tikzcd}[column sep=3.1em,row sep=tiny]
%         \faktor{\Z[\pi_1M_G]}{\langle1,\md(\pi_{d-1}M_G)\rangle} \arrow[tail]{rr}{\pi_{d-4}\amb_\U\circ\partial\realmap^\e_{u_+}} 
%         && \pi_{d-4}\Emb_s(\D^2,M) \arrow[two heads]{rrr}{(\eta_W\circ\deriv_{u^\e})\oplus p_{u_+})\circ\pi_{d-4}\foliate^\e_\U}
%         &&& \Z\oplus\pi_{d-2}M_G,
%     \\
%         \faktor{\Z[\pi_1M_G]}{\md^\e(\pi_{d-1}M_G)} \arrow[tail]{rr}{\pi_{d-4}\amb_\U\circ\partial\realmap^\e_{u_+}}
%         && \pi_{d-4}\Emb_s(\D^2,M) \arrow[two heads]{rrr}{((\eta_W\circ\deriv_{u^\e})\oplus p_{u_+})\circ\pi_{d-4}\foliate^\e_\U}
%         &&& \Z/2 \oplus\pi_{d-2}M_G.
%     \end{tikzcd}
% \end{equation*}
\[\begin{tikzcd}[column sep=large,ampersand replacement=\&]
\begin{drcases}
    k=2, &\faktor{\Z[\pi\sm1]}{\md_u(\pi_{d-1}M_G)}\\
    k\geq3, d-k\text{ even}, & \faktor{\Z[\pi\sm1]}{\langle g+g^{-1}\rangle\oplus\md_u(\pi_{d-k+1}M_G)}\\
    k\geq3, d-k\text{ odd}, & \faktor{\Z[\pi]}{\langle g-g^{-1}\rangle\oplus\md^\e_{u^\e}(\pi_{d-k+1}M_G)}
\end{drcases} \arrow[tail]{r}{\pi_{d-2k}\amb_\U\circ\partial\realmap^\e} 
        \& \pi_{d-2k}(\Emb_s(\D^k,M),\U)
        \arrow[two heads]{d}[description]{(\eta_W\circ\deriv_{u^\e}\oplus p_u)\circ\pi_{d-2k}\foliate^\e_\U}\\
        \& \hspace{-2cm}
\begin{drcases}
     d-k\text{ even}, & \Z\\
     d-k\neq1,3,7\text{ odd}, & \Z/2
\end{drcases}\oplus\pi_{d-k}M_G.\hspace{0.6cm}
     \end{tikzcd}
\]
In Theorem~\ref{thm-intro:combined} we have such an extension (we stated it exists for $d-k\neq1,3,7$ since then we do not have an explicit description of the quotient) but only in terms of the original manifold $M$, so we now remove all appearances of $M_G$. Moreover, our $u_+$ is homotopic into $\partial M$ in which case we simply write
\begin{equation}\label{eq:convention-md}
    \md\coloneqq\md_{u_+},\quad\text{ and }\quad\md^\e\coloneqq\md^\e_{u_+^\e}.
\end{equation}

\begin{lemma}\label{lem:lambda-splits}
    The inclusion $M\subseteq M_G$ induces isomorphisms $\pi_iM\cong\pi_iM_G$ for all $0\leq i\leq d-k-1$, a surjection $\pi_{d-k+1}M\sra\pi_{d-k+1}M_G$, and a split short exact sequence of $\Z[\pi_1M]$-modules
    \[\begin{tikzcd}
        \Z[\pi_1M]\arrow[tail,shift left]{r}{\cdot G} 
        & \pi_{d-k}M\arrow[two heads]{r}\arrow[shift left,dashed]{l}{\lambda^{rel}_M(\U,\bull)} 
        & \pi_{d-k}M_G,
    \end{tikzcd}
    \]
    where $\lambda^{rel}_M\colon\pi_k(M,\partial M)\times\pi_{d-k}(M)\to\Z[\pi_1M]$ is the relative equivariant intersection form.
\end{lemma}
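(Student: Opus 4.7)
My plan is to first replace the handle attachment by a cell attachment up to homotopy, and then compute everything via the long exact sequence of the pair $(\widetilde{M_G},\widetilde M)$ of universal covers. Since $h^{d-k+1}$ deformation retracts onto its core attached along $G\colon\S^{d-k}\hra \partial M\subseteq M$, we have a homotopy equivalence $M_G\simeq M\cup_G e^{d-k+1}$ of pairs, so from here on I treat $(M_G,M)$ as a relative CW pair with a single cell of dimension $d-k+1$. Because $d-k+1\geq 3$ (throughout Theorem~\ref{thm-intro:combined} we assume $d-k\geq 2$), the inclusion induces an isomorphism $\pi_1 M\cong \pi_1 M_G=\pi$, so the universal cover $\widetilde{M_G}$ is obtained from $\widetilde M$ by attaching one $(d-k+1)$-cell for each $g\in \pi$, and these cells are permuted freely by the deck action.

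The pair $(\widetilde{M_G},\widetilde M)$ is then $(d-k)$-connected by cellular approximation, which immediately gives the claimed isomorphisms $\pi_i M\cong \pi_i M_G$ for $0\leq i\leq d-k-1$. Applying relative Hurewicz to $(\widetilde{M_G},\widetilde M)$ identifies $\pi_{d-k+1}(\widetilde{M_G},\widetilde M)$ with the cellular chain group $H_{d-k+1}(\widetilde{M_G},\widetilde M;\Z)\cong \Z[\pi]$, with generator the attached $(d-k+1)$-cell, whose boundary in $\pi_{d-k}\widetilde M$ is $[G]$. Feeding this into the long exact sequence of the pair, and using $\pi_i \widetilde Y=\pi_i Y$ for $i\geq 2$, I get the five-term exact sequence
\[
    \pi_{d-k+1}M\lra \pi_{d-k+1}M_G\lra \Z[\pi]\xrightarrow{\;\cdot G\;}\pi_{d-k}M\lra \pi_{d-k}M_G\lra 0
\]
of $\Z[\pi]$-modules. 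Thus the claim reduces to showing that $\cdot G$ is injective, in which case $\pi_{d-k+1}M\to\pi_{d-k+1}M_G$ is automatically surjective and the sequence splits.

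For injectivity and for the explicit splitting I plan to use the geometric dual. The relative equivariant intersection form $\lambda^{rel}_M\colon \pi_k(M,\partial M)\times \pi_{d-k}M\to\Z[\pi]$ is a well-defined $\Z[\pi]$-linear pairing (by general position, after choosing a whisker from the basepoint to $\U$), computed as the signed count of intersections weighted by fundamental group elements along double point loops. After pushing $G$ slightly into the interior of $M$ to a parallel copy $G'$, the hypothesis $s\pitchfork G=\{pt\}$ translates to $\U\pitchfork G'=\{pt\}$, whose associated group element is trivial for a suitable choice of whisker. Equivariance of $\lambda^{rel}_M$ in the second variable then gives $\lambda^{rel}_M(\U,g\cdot G)=g$ for every $g\in\pi$, so the composition
\[
    \Z[\pi]\xrightarrow{\;\cdot G\;}\pi_{d-k}M\xrightarrow{\;\lambda^{rel}_M(\U,\bull)\;}\Z[\pi]
\]
is the identity. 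This both shows $\cdot G$ is split injective and furnishes the required $\Z[\pi]$-linear splitting.

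The only genuinely delicate step is pinning down the signs and whisker conventions so that $\lambda^{rel}_M(\U,G)$ is exactly the unit $1\in\Z[\pi]$ rather than some unit $\pm g_0$; once those conventions are fixed consistently with the ones used to define $\md$ and $\md^\e$ in \eqref{eq:convention-md}, the rest is a standard handle/cell computation plus Hurewicz.
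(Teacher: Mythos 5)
Your proposal is correct and follows essentially the same route as the paper: handle $\simeq$ cell, $\pi_{d-k+1}(M_G,M)\cong\Z[\pi_1M]$ as a free module, the long exact sequence of the pair, and the splitting $\lambda^{rel}_M(\U,\bull)$ coming from $\lambda_{\partial M}(s,G)=1$, i.e.\ a push-off of $G$ meeting $\U$ in a single interior point. You merely spell out the universal-cover/relative-Hurewicz details that the paper leaves implicit; also note that for the splitting itself it would suffice that $\lambda^{rel}_M(\U,G)$ be any unit of $\Z[\pi]$, so the whisker/sign convention is only needed for consistency with $\md$ and $\md^\e$ later.
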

\begin{proof}
    A $(d-k+1)$-handle is homotopy equivalent to a $(d-k+1)$-cell, so we immediately get $\pi_iM\cong\pi_iM_G$ below degree $d-k$. Moreover, the relative homotopy group $\pi_{d-k+1}(M_G,M)$ is the free $\Z[\pi_1M]$-module spanned by $h^{d-k+1}$. Once we show that homomorphism $\lambda^{rel}_M(\U,\bull)$ is a splitting, the surjectivity on $\pi_{d-k+1}$ will follow from the long exact sequence of a pair. Indeed, since $G$ is the geometric dual for $s=\partial\U$, we have $\lambda_{\partial M}(s,G)=1$, so a push-off of $G$ intersects $\U$ in the interior with $\lambda^{rel}_M(\U,G)=1$.
\end{proof}

\begin{lemma}\label{lem:Dax-M-M_G}
    We have $\md(\pi_{d-k+1}M_G)=\md(\pi_{d-k+1}M)$ as subgroups of $\Z[\pi_1M]$. Similarly for $\md^\e$.
\end{lemma}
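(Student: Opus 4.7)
The plan is to deduce this immediately from Lemma~\ref{lem:lambda-splits}. By the convention in \eqref{eq:convention-md}, the homomorphism $\md = \md_{u_+}$ has domain $\pi_{d-k+1}M_G$ (apply \eqref{eq:md-def} with $X = M_G$ and $\ell = k-1$) and codomain $\Z[\pi_1 M_G\smallsetminus 1]/\relations_{k-1,d}$. The standing assumption $d-k \geq 2$ gives $d-k-1\geq 1$, so Lemma~\ref{lem:lambda-splits} identifies $\pi_1 M \cong \pi_1 M_G$, and the group ring appearing in the statement is unambiguously defined.

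Under this identification I read $\md(\pi_{d-k+1}M)$ as the image under the composition
\[
\pi_{d-k+1}M \;\longrightarrow\; \pi_{d-k+1}M_G \;\xrightarrow{\;\md\;}\; \faktor{\Z[\pi_1 M]}{\relations_{k-1,d}},
\]
where the first arrow is induced by the inclusion $M\hookrightarrow M_G$. The key input is that Lemma~\ref{lem:lambda-splits} asserts this first arrow to be \emph{surjective}; since precomposition by a surjection does not change the image, the desired equality $\md(\pi_{d-k+1}M) = \md(\pi_{d-k+1}M_G)$ follows at once.

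The argument for $\md^\e$ is word-for-word the same, with $\md_{u_+}$ replaced by $\md^\e_{u_+^\e}$: both homomorphisms share the same source $\pi_{d-k+1}M_G$, and the surjection from Lemma~\ref{lem:lambda-splits} is a statement about the pair $(M, M_G)$ that is insensitive to which of $\md$ or $\md^\e$ is postcomposed. The only non-formal ingredient is therefore the surjectivity on $\pi_{d-k+1}$ already recorded in Lemma~\ref{lem:lambda-splits}, whose proof in turn reduces to the observation that the relative group $\pi_{d-k+1}(M_G, M)$ is the free $\Z[\pi_1 M]$-module on the $(d{-}k{+}1)$-handle, split off by the geometric intersection pairing with $\U$. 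No step of the present lemma poses any genuine obstacle beyond invoking Lemma~\ref{lem:lambda-splits}.
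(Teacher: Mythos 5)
There is a genuine gap, and it lies in your reading of the statement rather than in the algebra you perform. You interpret $\md(\pi_{d-k+1}M)$ as the image of the composite $\pi_{d-k+1}M\to\pi_{d-k+1}M_G\xrightarrow{\md_{u_+}}\Z[\pi_1M]$, where $\md_{u_+}$ is the single map defined with $X=M_G$. With that reading the lemma is indeed an immediate tautology from the surjectivity in Lemma~\ref{lem:lambda-splits} — but it is also vacuous for the purpose it serves. The lemma is invoked in the proof of Theorem~\ref{thm-intro:combined} precisely to ``remove all appearances of $M_G$'', i.e.\ to replace $\md(\pi_{d-k+1}M_G)$ by a term defined intrinsically in terms of $M$; under your reading the right-hand side still refers to the Dax homomorphism of $M_G$, so nothing has been removed. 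The intended meaning (and the one the paper proves) is that there are \emph{two} Dax-type homomorphisms: $\md_M\colon\pi_{d-k+1}M\to\Z[\pi_1M]/\relations_{k-1,d}$, computed by lifting a class to a family of immersed $(k-1)$-disks in $M$ (using that $u_+$ is homotopic into $\partial M$, so it gives a basepoint there), and $\md_{M_G}\colon\pi_{d-k+1}M_G\to\Z[\pi_1M_G]\cong\Z[\pi_1M]$, computed in $M_G$; the claim is $\im(\md_M)=\im(\md_{M_G})$.

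The step your proposal is missing is the commutativity $\md_{M_G}\circ p=\md_M$, where $p\colon\pi_{d-k+1}M\sra\pi_{d-k+1}M_G$ is induced by inclusion. This is easy but not formal: if $f\colon\S^{d-k+1}\to M$ is lifted to a $(d-2k)$-family $F(\vec{t})\colon\D^{k-1}\imra M$, then the same family, viewed in $M_G$, lifts $p(f)$, and its Dax invariant is computed from the same double points and the same double-point loops, which are unchanged by attaching a handle along $\partial M$ (and $\pi_1M\cong\pi_1M_G$). This commutativity gives $\im(\md_M)\subseteq\im(\md_{M_G})$ directly, and together with the surjectivity of $p$ — the ingredient you did use correctly — it gives the reverse inclusion: any $\md_{M_G}(a)$ equals $\md_M(b)$ for any $b$ with $p(b)=a$. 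The same remark applies verbatim to $\md^\e$. So your argument needs to be supplemented by this geometric identification; as written it proves a different (trivial) statement.
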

\begin{proof}
    We need show that $\im(\md_M)=\im(\md_{M_G})$ for the respective $\md$ maps. The diagram
    \[\begin{tikzcd}[column sep=large]
          \pi_{d-k+1}M\arrow[two heads]{r}{p}\arrow[bend right=10pt]{rr}[swap]{\md_{M}} & \pi_{d-k+1}M_G\arrow{r}{\md_{M_G}} & \Z[\pi_1M]
        \end{tikzcd}
    \]
    commutes, since attaching a handle to $\partial M$ does not influence the calculation of $\md_{M}([f])\coloneqq\Da(\wt{F})$. Indeed, if $f\colon\S^{d-k+1}\to M$ is represented by a $(d-2k)$-family $F(\vec{t})\colon\D^{k-1}\imra M$, the same family also computes $\md_{M_G}([f])$. This immediately implies $\im\md_M\subseteq\im\md_{M_G}$. The other inclusion follows since $p$ is surjective (by Lemma~\ref{lem:lambda-splits}): if $r=\md_{M_G}(a)$ for $a\in\pi_{d-k+1}M$, then $r=\md_M(b)$ for $b=p(a)$. The argument is the same for $\md^\e$.
\end{proof}

\begin{proof}[Proof of Theorem~\ref{thm-intro:combined}]
    By the last two lemmas, we may replace $\pi_{d-k}M_G$ by $\pi_{d-k}M/\Z[\pi_1M]\cdot G$ and $\md(\pi_{d-k+1}M_G)$ by $\md(\pi_{d-k+1}M)$, and $\md^\e(\pi_{d-k+1}M_G)$ by $\md^\e(\pi_{d-k+1}M)$. It thus only remains to see that the maps are as claimed in Theorem~\ref{thm-intro:combined}, namely
    \[
        (\eta_W\circ\deriv_{u^\e}\oplus p_{u_+}) \circ\pi_{d-4}\foliate^\e_\U=\eta_{W,\U}\oplus (-\U\cup\bull).
    \]
    For $K\colon\S^{d-2k}\to\Emb_s(\D^k,M)$ the map  $\foliate^\e_\U(K)\colon\S^{d-2k+1}\to\Emb_{u_0^\e}^\e(\D^{k-1},M_G)$ maps $\vec{t}\wedge t\in\S^{d-2k}\wedge\S^1\cong\S^{d-2k+1}$ to $\foliate^\e_\U(K_{\vec{t}})(t)$, the time $t$ of the foliation of the sphere $-\U\cup K_{\vec{t}}$ (use the canonical null homotopy of $-\U\cup\U$ to get the map on the smash product). Then, $p_{u_+}([\foliate^\e_\U(K)])$ is the homotopy class of the map that takes $\vec{t}\in\S^{d-2k+1}$ to $-u_+\cup_\partial\foliate^\e_\U(K_{\vec{t}})(t)\in\Omega M_G$ (based at $u_+(e_{\D^{k-1}})$), see the discussion after Theorem~\ref{thm:Dax-augmented}. It is not hard to see that $-u_+$ is inessential, i.e.\ this is homotopic to $-\U\cup K\colon \S^{d-2k}\to \Map_*(\S^k,M_G)$, so $(p_{u_+} \circ\pi_{d-2k}\foliate^\e_\U)(K)=[-\U\cup K]$ modulo $\Z[\pi_1M]G$.
    
    For $d-k$ even we next identify the composite $\eta_{W,\U}\coloneqq\eta_W\circ\pi_{d-2k+1}\deriv_{u_+^e}\circ\pi_{d-2k}\foliate^\e_\U$ for the splitting $\eta_W\colon\pi_{d-k}V_k(M_G)\to\Z$ constructed in Proposition~\ref{prop:splittingsZ}. Unless $d-k=k=2,4,8$, it was given by $\eta_W=\mathbf{e}^d_k/2$ where $\mathbf{e}^d_k([f])=\langle e(f^*(TM_G)_{d-k}),[\S^{d-k}]\rangle\in2\Z$ (see Definition~\ref{def:Euler}), while for $d-k=k=2,4,8$ we also have the correction term $W[f]$. 
    %If $[f]=\pi_{d-2k+1}\deriv_{u_+^e}\foliate^\e_\U[K]$ then $pr_k[f]$ is just $p_{u_+}([\foliate^\e_\U(K)])=-\U\cup K$. Thus, the correction term is $\lambda_{M_G}(-\U\cup K,-\U\cup K)$.
    % $W_{M_G}(f)/2$ with $W_{M_G}\colon\pi_2M_G\to\Z$ a lift of $w_2(TM_G)$. Using Lemma~\ref{lem:lambda-splits} we can lift this to $W\colon\pi_2M\to\Z$ by defining $W(\Z[\pi_1M]\cdot G)=0$, so that $W_{M_G}([\foliate^\e_\U(K)])=W(-\U\cup K)$. 
    The following lemma then finishes the proof of Theorem~\ref{thm-intro:combined}.
\end{proof}
\begin{remark}
     The homomorphism $\pi_{d-2k}\amb_\U\circ\partial\realmap^\e_{u_+}$ can be made explicit: its value on $g\in\pi_1X$ is the family of $k$-disks in~$M$ obtained by applying to the half-disk $\U$ in $M_G$ the ambient isotopy extended from the family $\partial\realmap^\e_{u_+}(g)$. Alternatively, one can guess a geometric candidate $f_g\in\pi_{d-2k}\Emb_s(\D^2,M)$ and check that its foliation has the correct Dax invariant, that is, $\Da\circ\foliate^\e_\U(f_g)=g$, so $f_g=\pi_{d-2k}\amb_\U\circ\partial\realmap^\e_{u_+}(g)$. The latter approach is carried through for $k=2,d=4$ in \cite{KT-4dLBT}. Moreover, in these cases we also compute there the kernel and cokernel of $-\U\cup\bull\colon\pi_{d-2k}\Emb_s(\D^k,M)\to\pi_{d-k}M$.
\end{remark}
\begin{lemma}\label{lem:e}
    The number $\mathbf{e}^d_k\circ\deriv_{u_+^e}\circ\foliate^\e_\U([K])\in\Z$ is equal to the relative Euler number $e(\nu\wt{K},\nu\wt{\U})$ of the normal bundle of the immersion $\wt{K}\colon \I^{d-2k}\times\D^k \imra \I^{d-2k}\times M$ given by $(\vec{t},x)\mapsto (\vec{t},K_{\vec{t}}(x))$, relative to the immersion $\wt{\U}$ corresponding to the constant family $\U$.
        In particular, the map $\pi_{d-2k}(\Emb_s(\D^k,M),\U)\to\Z$, given by $[K]\mapsto e(\nu\wt{K},\nu\wt{\U})$, is a homomorphism.
\end{lemma}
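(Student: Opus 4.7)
The plan is to unwind $\mathbf{e}^d_k\circ\deriv_{u_+^\e}\circ\foliate^\e_\U([K])$ and recognize it, via a canonical bundle isomorphism over $\S^{d-k}$, as the relative Euler number of normal bundles. Recall that $\mathbf{e}^d_k[f]=\langle e(f^*(TM_G)_{d-k}),[\S^{d-k}]\rangle$, where $(TM_G)_{d-k}\to V_k(M_G)$ is the rank $(d-k)$ orthogonal complement to the tautological $k$-frame. For $f\coloneqq\deriv_{u_+^\e}\circ\foliate^\e_\U(K)\colon\S^{d-k}\to V_k(M_G)$, viewed via $\S^{d-k}\cong\S^{d-2k}\wedge\S^1\wedge\S^{k-1}$, the value $f(\vec t,s,v)$ is the $k$-frame at $p\coloneqq\foliate^\e_\U(K_{\vec t})(s)(v,0)$ consisting of the $(k-1)$ tangent directions of the foliating $(k-1)$-disk at $(v,0)$ together with the unit $\e$-augmentation vector. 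These $k$ vectors span the tangent space to the $k$-dimensional locus in $M_G$ traced out by the foliation near $p$, so the orthogonal complement is canonically the normal space of that locus.

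Next, I would identify $\S^{d-k}$ with $(\I^{d-2k}\times\D^k)/\partial$, where the $\D^k$ coordinate is obtained by combining the foliation parameter $s\in[0,1]$ with $\D^{k-1}$. Using Lemma~\ref{lem:MG} to pass from $M_G$ back to $M\sm\nu\U$, the foliation locus for fixed $\vec t$ is, up to the ambient isotopy $\Phi_t$ from the proof of Theorem~\ref{thm:anyD}, the image of $\U$ on the subregion $\{s\le\e\}$ and the image of $K_{\vec t}$ on $\{s\ge\e\}$, glued along the common boundary coming from $K_{\vec t}|_{\partial\D^k}=\U|_{\partial\D^k}$. Consequently, $f^*(TM_G)_{d-k}$ is canonically isomorphic to the bundle on $\S^{d-k}$ obtained by gluing $\nu\wt{K}$ over $\I^{d-2k}\times\D^k$ to $-\nu\wt{\U}$ over a second copy of $\I^{d-2k}\times\D^k$ along the common boundary $\partial(\I^{d-2k}\times\D^k)\cong\S^{d-k-1}$. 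By the standard interpretation of relative Euler numbers as Euler numbers of such doubled bundles, the Euler number of this bundle on $\S^{d-k}$ equals $e(\nu\wt{K},\nu\wt{\U})$, giving the first statement.

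The homomorphism assertion then follows since $\mathbf{e}^d_k$ is a homomorphism and $\foliate^\e_\U,\deriv_{u_+^\e}$ are based maps (and $d-2k\geq1$ so the source is an honest group). The main obstacle is the behavior at the transition $s=\e$: one must verify that the ambient isotopy $\Phi_t$, identity outside a collar of $\partial M_G$, preserves the relative Euler number, which follows from its being a diffeomorphism inducing a canonical bundle isomorphism on normal bundles along the trace. Orientation conventions must also be tracked carefully so that the sign in $-\nu\wt{\U}$ in the doubled bundle matches the sign in the definition of the relative Euler number; this can be settled by a local model computation in a tubular neighborhood of a single point of $\wt{\U}\cap\wt{K}|_\partial$, since both sides are additive under stacking in the $\I^{d-2k}$-direction.
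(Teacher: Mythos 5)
Your proposal is correct and follows essentially the same route as the paper: both identify $f^*(TM_G)_{d-k}$ with the normal bundle of the swept $k$-dimensional locus, split $\S^{d-k}$ into the piece coming from $K$ and the piece coming from the constant family $\U$, and read off the Euler number of the glued bundle as the relative Euler number $e(\nu\wt K,\nu\wt\U)$, with the homomorphism claim following since $\mathbf{e}^d_k$, $\deriv_{u_+^\e}$ and $\foliate^\e_\U$ induce homomorphisms. The only cosmetic difference is that the paper does not invoke Lemma~\ref{lem:MG}; it simply notes that the normal bundles may be computed in $\I^{d-2k}\times M$ since the relevant disks lie in $M\subseteq M_G$, which is all your transition/orientation bookkeeping at $s=\e$ amounts to.
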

\begin{proof}
    The normal bundle to $\wt{K}$ consists of vectors $(\vec{0},v_x)$ where $\vec{0}\in T_{\vec{t}}(\I^{d-2k})$ and $v_x$ is a normal direction to $K_{\vec{t}}$ at $x$ in $M$. We need to compute the Euler class of the bundle $f^*(TM_G)_{d-k}$ over $\S^{d-k}$, where $f=\deriv_{u_+^\e}\foliate^\e_\U(K)\colon\S^{d-k}\to V_k(M_G)$. This is obtained by gluing together two maps $\I^{d-k-1}\to \Omega V_k(M_G)$, namely $f_K(\vec{t})=\deriv_{u_+^\e}\foliate^\e(K_{\vec{t}})$ and the constant family $f_{-\U}(\vec{t})=\deriv_{u_+^\e}\foliate^\e(-\U)$. 
    
    First observe we can disregard $u_+^\e$ as before, so that $f_K\colon\I^{d-2k}\times\D^k\to V_k(M_G)$ is at $(\vec{t},x)$ given by the derivative at $x\in\D^k$ of the embedded disk $K_{\vec{t}}$ in $M_G$. Then the bundle $f_K^*(TM_G)_{d-k}$ is by definition given at a point $(\vec{t},x)$ as the subspace of $T_pM_G$, for $p=K_{\vec{t}}(x)$, orthogonal to the derivative $\deriv_xK_{\vec{t}}$, so belongs to the normal bundle of $K_{\vec{t}}$ in $M_G$. The same is true for the constant family $\vec{t}\mapsto-\U$ in place of $K$, and they agree on the boundary $\partial(\I^{d-2k}\times\D^k)$. Moreover, these normal bundles can be taken in $\I^{d-2k}\times M$ instead. Thus, the Euler number of $f^*(TM_G)_{d-k}$ is precisely the relative Euler number of the normal bundles to the immersions $\wt{K}$ and $\wt{\U}$.
\end{proof}

% \begin{figure}[!htbp]
%     \centering
%     \includegraphics[width=0.78\textwidth]{Figures/ambExp1.png}
%     caption{The family $\Exp_u(1)\colon\D^l\times[0,\e]\hra X$ for $l=1$ and $d=4$. The wiggly curves describe a family of push-offs $\Exp_u(1)(\bull,\e)$ of the dashed arc $u=\Exp_u(1)(\bull,0)$, that is contained in the present $u\subseteq\D^3\times\{0\}\subseteq\D^d$. For the two vectors which are parallel to $u$ parts of the push-off are respectively in the past and future. The $2$-disk $\U_{tw}^G(\vec{t})$ is the union of the top right strip, wiggly arcs, and the rest of $\U$ in bottom left.}
%     \label{fig:ambExp(1)}
% \end{figure}
\begin{remark}\label{rem:amb-Exp}
    For $d-k$ even we describe a map $\U_{tw}^G\coloneqq\amb_\U\Exp(1)\colon\S^{d-2k}\to\Emb_s(\D^k,M)$ that splits off the $\Z$ factor, $\eta_{W,\U}(\U_{tw}^G)=1$. Firstly, $\Exp(1)$ is the $(d-2k+1)$-family of augmentations of the $(k-1)$-disk $u_+$ obtained by integrating the normal vector field given by its meridian $\mu(\S^{d-k-1})$, see Remark~\ref{rem:Exp-pic}.
    Applying the ambient isotopy map $\amb_\U$ to this gives a family $\U_{tw}^G(\vec{t})=\amb_\U(\Exp(1)(\vec{t}\wedge-))$, supported in a neighborhood of $\U$; for a fixed $\vec{t}\in\S^{d-2k}$ the ambient isotopy pushes $\U$ around $\mu(\vec{t}\wedge-)$, so $\U_{tw}^G$ is obtained by a ``family interior twist'' to $\U$, plus tubing the unique double point at $p$ into $G$. See \cite[Fig.5.9]{KT-4dLBT} for the $2$-disk $\U_{tw}^G$ when $d=4$ (in other dimensions this is one of the disks in the family $\U_{tw}^G$).
\end{remark}

\appendix

%%%%%%%%%%%%%%%%
\section{On Hurewicz fibrations}\label{app:Hurewicz}
Recall that a map $p\colon E\to B$ is a Hurewicz fibration if any homotopy lifting problem as in
    \begin{equation}\label{eq:htpy-lifting-problem}
    \begin{tikzcd}
        X\times\{0\}\arrow[]{r}{H_0}\arrow[tail]{d}  & E\arrow[]{d}{p}\\
        X\times[0,1]\arrow[]{r}[swap]{h}\arrow[dashed]{ur}{H} & B
    \end{tikzcd}
\end{equation}
has a solution $H$, a so-called ``lift''. We will need some properties of such lifts.
\begin{lemma}\label{lem:uniqueness}
    Up to homotopy rel.\ $X \times \{0\}$ and over $B$, the lift $H$ as in the diagram \eqref{eq:htpy-lifting-problem} is uniquely determined by the initial conditions $(h,H_0)$. As a consequence, its restriction $H_1\colon X \times \{1\}\to E$ is also unique up to homotopy.
\end{lemma}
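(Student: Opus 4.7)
The plan is to establish the first claim by constructing, given any two lifts $H$ and $H'$ of the data $(h, H_0)$, an interpolating homotopy $\Phi\colon X \times I \times I \to E$ (writing $I = [0,1]$, with coordinates $(t,s)$) satisfying $\Phi|_{s=0} = H$, $\Phi|_{s=1} = H'$, $\Phi(x,0,s) = H_0(x)$ for all $s$, and $p \circ \Phi(x,t,s) = h(x,t)$. Such a $\Phi$ is by definition a homotopy from $H$ to $H'$ rel.\ $X \times \{0\}$ and over $B$, proving the first assertion. The consequence for $H_1 = H|_{X \times \{1\}}$ then follows immediately by restricting $\Phi$ to $X \times \{1\} \times I$.

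To construct $\Phi$, I would observe that the three pieces of prescribed boundary data assemble into a single continuous map on $X \times J$, where
\[
J \coloneqq (I \times \{0\}) \cup (\{0\} \times I) \cup (I \times \{1\}) \;\subset\; I \times I
\]
is the ``C''-shape consisting of three sides of the square: the bottom edge $I\times\{0\}$ carries $H$, the left edge $\{0\}\times I$ carries the constant homotopy at $H_0$, and the top edge $I\times\{1\}$ carries $H'$. These agree at the corners $(0,0)$ and $(0,1)$ because $H$ and $H'$ both restrict to $H_0$ on $X \times \{0\}$. The map in $B$ to be lifted is the constant-in-$s$ homotopy $(x,t,s) \mapsto h(x,t)$, which automatically matches $p$ applied to the boundary data on $X \times J$.

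The key step is then a reparametrisation trick. The pair $(I \times I, J)$ is homeomorphic to $(I \times I, I \times \{0\})$: both $J$ and $I \times \{0\}$ are arcs on the boundary circle $\partial(I \times I)$ with homeomorphic (connected) complements, and any homeomorphism of this boundary circle carrying one arc onto the other extends over the square, e.g.\ by coning. Fix such a self-homeomorphism $\psi\colon I \times I \to I \times I$ with $\psi(I \times \{0\}) = J$. Pulling back along $\mathrm{id}_X \times \psi$ converts the problem into a standard homotopy lifting problem for the Hurewicz fibration $p$: extend a prescribed map on $X \times I \times \{0\}$ to all of $X \times I \times I$ while lifting the homotopy $(x,u,s) \mapsto h(x, \mathrm{pr}_t \psi(u,s))$ into $B$. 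A lift exists by the defining property of a Hurewicz fibration, and postcomposing with $\mathrm{id}_X \times \psi^{-1}$ yields the required $\Phi$. The only mildly technical point is the construction of $\psi$, but this is elementary planar topology and can be made explicit piecewise-linearly if desired; no conceptual obstacle is anticipated.
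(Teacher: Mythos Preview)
Your proof is correct and rests on the same underlying idea as the paper's: glue the two lifts together on part of the boundary of a two-parameter family and extend over the interior using the lifting property of $p$. The paper packages this via the horn inclusion $X\times\Lambda^1 \hookrightarrow X\times\Delta^2$ and appeals to the Hurewicz model structure (horn inclusions are acyclic cofibrations, hence lift against fibrations); you instead use the C-shape $J\subset I^2$ and an explicit homeomorphism of pairs $(I^2,J)\cong(I^2,I\times\{0\})$ to reduce directly to the defining homotopy lifting property. Your route is slightly more elementary and self-contained, while the paper's formulation is more conceptual and points immediately to the higher-dimensional generalization. Either way the content is the same.
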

\begin{proof}
    If $\Lambda^{k-1}\subseteq \partial\Delta^k$ is a $(k-1)$-horn, i.e.\ it consists of all $(k-1)$-faces of the k-simplex, except for one, then a Hurewicz fibration also has the lifting property for pairs $(X \times \Delta^k, X \times \Lambda)$, by the fact that these are acyclic cofibrations in the model structure on topological spaces for which the Hurewicz fibrations are the fibrations. The case $k=1$ is precisely the lifting problem~\eqref{eq:htpy-lifting-problem}, and we use the case $k=2$ to show the uniqueness of the lifted homotopy. 
    Namely, suppose we have two lifts $H$ and $H'$ in diagram \eqref{eq:htpy-lifting-problem}.
    Since they agree on $X \times \{0\}$ we can glue them together at the vertex $v_0\in \Delta^2$ to give a map $X \times \Lambda^1\to E$ that lifts (the restriction of) the map $\tilde h\colon X \times \Delta^2\to B$ that equals $h$ on all rays from $v_0$ to the opposite edge $ \langle v_1,v_2 \rangle\subseteq\Delta^2$. The lift $X \times \Delta^2\to E$ is thus a homotopy from $H$ to $H'$ rel.\ $X \times \{0\}$ and over $B$.
\end{proof}
In particular, consider in \eqref{eq:htpy-lifting-problem} the space $X\coloneqq \Omega B$ of loops based at $b\coloneqq p(e)$ for a basepoint $e\in E$, and the initial conditions $H_0=\const_e\colon\beta\mapsto e$ and $h=\ev\colon(\beta, t)\mapsto\beta(t)$. Any lift $\wh{\ev}\coloneqq H\colon\Omega B\times[0,1]\to E$ at time $t=1$ takes values in the fiber $F\coloneqq p^{-1}(b)$, hence gives a so-called {\em connecting map} $\delta\coloneqq\wh{\ev}_1\colon\Omega B\to F$.
\begin{cor}\label{cor:connecting map}
    Up to homotopy, the map $\delta\colon\Omega B\to F$ is independent of the choice of a lift $\wh{\ev}$ and only depends on the Hurewicz fibration $p$ and the basepoint $e\in E$. Moreover, it is natural:
    given two commuting squares on the right of the diagram
    \[\begin{tikzcd}[column sep=large]
        \Omega B' \arrow{r}{\delta'} \arrow{d}{\Omega(g_B)}
        &  F'\arrow[tail]{r} \arrow{d}{g_{\mid F}}
        & E'\arrow{r}{p'} \arrow{d}{g}
        & B' \arrow{d}{g_B}
        \\
        \Omega B \arrow{r}{\delta} 
        & F\arrow[tail]{r} 
        & E\arrow{r}{p} 
        & B 
    \end{tikzcd}
    \]
    any choice of connecting maps $\delta, \delta'$ makes the square on the left commute up to homotopy.
\end{cor}
\begin{proof}
    The first part follows directly from Lemma~\ref{lem:uniqueness}.
    The naturality follows from it as well, this time applied to lifting $g_B\circ \ev'\colon\Omega B' \times [0,1]\to B$ with the initial condition $g\circ \const_{e'}$.
\end{proof}
If $B$ is well-based the above discussion holds in the based category; this is the case for our spaces of embeddings as they are locally contractible. Then $\Omega B$ is well-based at~$\const_b$ and there are based connecting maps $\delta\colon \Omega B\to F$, inducing the boundary maps in the long exact sequence of homotopy groups for the fibration $p$.
In particular, if $E$ is contractible then any such $\delta$ is a weak homotopy equivalence. We will also need the following strengthening, whose proof we did not find in the literature.
\begin{lemma}\label{lem:Hurewicz}
     If $E$ is contractible, a connecting map $\delta$ is a homotopy equivalence with homotopy inverse $(p_*R)|_{F}\colon F\to \Omega B$ given by $x\mapsto(t\mapsto p\circ R_t(x))$, where $R\colon E\to \Path_eE\coloneqq\{\eta\colon[0,1]\to E \mid  \eta(0)=e\}$ is a contraction of~$E$ with $\ev_1\circ R=\Id_E$.
\end{lemma}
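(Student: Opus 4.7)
The plan is to show directly that $(p_*R)|_F$ is a two-sided homotopy inverse to any connecting map $\delta$, using only the naturality statement in Corollary~\ref{cor:connecting map} and the uniqueness of lifts in Lemma~\ref{lem:uniqueness}.

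For the composition $(p_*R)|_F\circ\delta\simeq\Id_{\Omega B}$, I would compare $p\colon E\to B$ to the standard path-space fibration $\pi\colon \Path_bB\to B$, $\eta\mapsto\eta(1)$, whose fiber over $b$ is $\Omega B$. The map $p_*R\colon E\to \Path_b B$ covers $p$, since $\pi\circ(p_*R)(x)=p(R_1(x))=p(x)$, so it is a morphism of fibrations over $B$, inducing exactly $(p_*R)|_F\colon F\to \Omega B$ on fibers. Moreover, the tautological lift $(\beta,t)\mapsto(s\mapsto\beta(ts))$ of $\ev\colon\Omega B\times[0,1]\to B$ through $\pi$ exhibits the connecting map of $\pi$ as $\Id_{\Omega B}$ on the nose. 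Naturality of the connecting map (Corollary~\ref{cor:connecting map}) then yields $(p_*R)|_F\circ\delta\simeq\Id_{\Omega B}\circ\Id_{\Omega B}=\Id_{\Omega B}$.

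For the opposite composition $\delta\circ(p_*R)|_F\simeq\Id_F$, the strategy is to invoke uniqueness of lifts. Unpacking the definition of $\delta$, the composite $\delta\circ(p_*R)|_F\colon F\to F$ equals the time-$1$ value of any lift through $p$ of the homotopy $F\times[0,1]\to B$, $(x,t)\mapsto p(R_t(x))$, with initial condition $\const_e$ at $t=0$. But the contraction $R$ itself, restricted to $F\times[0,1]\to E$, is manifestly such a lift: $R_0=\const_e$ by hypothesis, $p\circ R$ covers the prescribed homotopy by construction, and $R_1|_F=\Id_F$ since $\ev_1\circ R=\Id_E$. Lemma~\ref{lem:uniqueness} then forces $\delta\circ(p_*R)|_F\simeq\Id_F$.

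The only technicality to watch is keeping everything in the based category where Corollary~\ref{cor:connecting map} is stated. I would arrange that the contraction $R$ preserves the basepoint, i.e.\ $R_t(e)=e$ for all $t$, which makes $p_*R$ a based morphism of fibrations and $(p_*R)|_F$ a based map; this is harmless since the spaces of interest are locally contractible, hence well-based, and a basepoint-preserving contraction can always be arranged.
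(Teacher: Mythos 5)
Your argument is correct in substance and takes a genuinely different route from the paper's. The paper also compares $p$ with the path fibration $\ev_1\colon\Path_bB\to B$ via $p_*R$, but it works at the level of total spaces: since $E$ and $\Path_bB$ are contractible, $p_*R$ and the lift $q\coloneqq\wh{\ev}_1\colon\Path_bB\to E$ are fiber homotopy equivalences over $B$ (citing \cite[4.H]{Hatcher}), an explicit homotopy over $B$ from $p_*R\circ q$ to $\Id_{\Path_bB}$ is produced from two lifts of one lifting problem together with Lemma~\ref{lem:uniqueness}, and the statement follows by restricting to fibers. You instead stay on the fibers throughout: $\delta\circ(p_*R)|_F\simeq\Id_F$ by comparing the lift $\wh{\ev}\circ\big((p_*R)|_F\times\Id\big)$ with the lift $R|_{F\times[0,1]}$ of the same homotopy (both with initial condition $\const_e$), and $(p_*R)|_F\circ\delta\simeq\Id_{\Omega B}$ by naturality of connecting maps against the path fibration, whose connecting map is the identity via the tautological lift. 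This avoids the fiber-homotopy-equivalence input entirely and uses only Lemma~\ref{lem:uniqueness} and Corollary~\ref{cor:connecting map}, which is shorter and more self-contained.

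The one genuine wrinkle is your closing paragraph. The lemma, and its use in the proof of Theorem~\ref{thm:anyD} where $(p_*R)|_F$ is identified with $\foliate^\e_\U$, concerns a \emph{given} contraction $R$ with $\ev_1\circ R=\Id_E$; the contraction constructed there does not fix the basepoint (for intermediate $t$ one has $R_t(\U)=\Phi_t\circ\U\circ\varphi_t\neq\U$), so ``arranging $R_t(e)=e$'' replaces the very map whose homotopy class is the content of the statement. Naturality (Corollary~\ref{cor:connecting map}) does require $p_*R(e)$ to be the basepoint at which the connecting map of $\ev_1\colon\Path_bB\to B$ is taken, so your first composite, as written, only covers based $R$. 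Two quick repairs: (i) observe that your second composite $\delta\circ(p_*R)|_F\simeq\Id_F$ uses no basepoint assumption, and once $\delta$ is known to be a homotopy equivalence (e.g.\ from the based case), a one-sided homotopy inverse is automatically two-sided, so the statement for the original $R$ follows; or (ii) take the connecting map of the path fibration at the basepoint $p_*R(e)\in\Omega B$, note that it is concatenation with the loop $p\circ R(e)$, and that this loop is based null-homotopic because $R(e)$ is a loop in the contractible space $E$ --- which is exactly the role played by the null-homotopy $\rho_t$ in the paper's proof.
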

%  Note that if $B$ is well-based $\delta$ is a based homotopy equivalence, but $pR|_F$ is based only if $R$ is.
\begin{proof}
   Consider the following diagram of two fibrations and a lifting problem on top:
\begin{equation}\label{eq:two-fibrations}
    \begin{tikzcd}
    \Omega B \times \{1\}  \arrow{d}{\delta} & 
    \Path_bB \times \{0\}\arrow[tail]{r} \arrow{d}[swap]{\const_e} &
    \Path_bB \times [0,1] \arrow{d}{\ev} \arrow[dashed]{dl}[swap]{\wh{\ev}}
    \\
    F\arrow[tail]{r} \arrow{d}{p_*R|_F} & 
    E\arrow{r}{p} \arrow{d}[swap]{p_*R} & 
    B \arrow[equals]{d}{}
    \\
    \Omega B\arrow[tail]{r} & 
    \Path_bB\arrow{r}{\ev_1} & 
    B.
    \end{tikzcd}
\end{equation}
    As a map between contractible spaces, $p_*R\coloneqq(t\mapsto p\circ R_t)$ is a homotopy equivalence. Being over $B$ ($\ev_1p_*R=p$), this is also a fiber homotopy equivalence \cite[4.H]{Hatcher}. 
    For the same reason, the restriction $q\coloneqq\wh{\ev}_1\colon\Path_bB\to E$ of the lift $\wh{\ev}$ is a fiber homotopy equivalence over $B$. We claim that \emph{there is a homotopy $p_*R \circ q \simeq \Id$ over $B$}, so uniqueness of inverses will imply that $p_*R$ and $q$ are inverse homotopy equivalences, as well as the desired restrictions $p_*R|_F$ to $F\subseteq E$ and $\delta=q|_{\Omega B}$ to $\Omega B\subseteq \Path_bB$.
    
    To find the claimed homotopy we observe two solutions for the outer lifting problem on the right of~\eqref{eq:two-fibrations} (for the fibration $\ev_1$). One lift is clearly $p_*R\circ\wh{\ev}$. We define the second by the formula
    \[(\beta, t)\mapsto\left(s\mapsto\begin{cases}
        p\circ \rho_t(\frac{s}{1-t}), & s\in[0,1-t]\\
        \beta(s-(1-t)), & s\in[1-t,1]
        \end{cases}\right),
    \]
    where $\rho_t\in\Omega E$ for $t\in[0,1]$ is a homotopy from $\rho_0=R(e)$ to $\rho_1=\const_e$, which exists by the argument below. 
    This is indeed another solution, since $\ev_1$ evaluates all paths at $s=1$, so it gives $\beta(t)=\ev(\beta,t)$, while restricting to $t=0$ gives exactly $p\rho_0=pR(e)$. The first lift at $t=1$ is $p_*R\circ q$ whereas the second is $\Id_{\Path_bB}$, so the uniqueness from Lemma~\ref{lem:uniqueness} gives the desired homotopy over $B$ between them.

    To find a homotopy $\rho_t$ from $R(e)\in\Omega E$ to $\const_e$ rel.\ $e$, first observe that these loops are homotopic (since $E$ is contractible to $e$), and then by the usual argument they are also \emph{based} homotopic: free homotopy classes in a path-connected space are in bijection with the conjugacy classes in the fundamental group, and for the unit this consists of a single element.
\end{proof}

\section{On homotopy groups of frame bundles}\label{app:frame-bundles}
In this appendix we collect some information needed for the results in the body of the paper. For the convenience of the reader we include short arguments, some of which may be new.

\subsection{Universal Euler classes of frame bundles}
Fix integers $0\leq \ell\leq d$, a topological space $X$, and a rank $d$ vector bundle $\xi$ on $X$ with inner product. Let $\forg_\ell\colon V_\ell(\xi)\to X$ be the associated $\ell$-frame bundle, given by orthonormal sequences $(v_1,\dots,v_\ell)$ of vectors $v_i\in\xi|_x$, $x\in X$, meaning that $v_i$ are pairwise orthogonal and have unit norm. The pullback bundle $\forg_\ell^*(\xi)$ has $\ell$ canonical sections, and we consider their orthogonal complement $\xi_{d-\ell}$, which is a rank $(d-\ell)$ vector bundle on $V_\ell(\xi)$. Note that $V_0(\xi)=X$ and $\forg_0=\Id$.
  
We will not assume that $\xi$ is orientable in this appendix, although this will be the case in our applications. Recall that one can define the twisted Euler class of $\xi_{d-\ell}$ as a class $e(\xi_{d-\ell})\in H^{d-\ell}(V_\ell(\xi);\Z^w)$,  where the coefficients are twisted by the first Stiefel--Whitney class $w\coloneqq w_1(\xi_{d-\ell})$.  
\begin{defn}\label{def:Euler} 
    Define the map
    $\mathbf{e}^d_\ell\colon\pi_{d-\ell}V_\ell(\xi)\ra\Z$
    by the formula:
\[
    \mathbf{e}^d_\ell(f) \coloneqq \langle e(\xi_{d-\ell}), f_*[\S^{d-\ell}] \rangle = \langle e(f^*(\xi_{d-\ell})), [\S^{d-\ell}] \rangle.
\]
\end{defn}
Note that the last expression shows that $\mathbf{e}^d_\ell$ is a group homomorphism. 

\begin{lemma}\label{lem:chi}
    For $\ell\geq1$ let $pr_\ell\colon V_\ell(\xi)\to V_{\ell-1}(\xi)$ be the fibration that forgets the last vector in an $\ell$-frame, and $i_\ell\colon\S^{d-\ell}\hra V_\ell(\xi)$ its fiber inclusion. 
    Then $\mathbf{e}^d_\ell(i_\ell)=\chi(\S^{d-\ell})$, the Euler characteristic of $\S^{d-\ell}$.
\end{lemma}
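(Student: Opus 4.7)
The plan is to unpack the definition of $\mathbf{e}^d_\ell(i_\ell)$ and to identify the pullback bundle $i_\ell^*(\xi_{d-\ell})$ with the tangent bundle of the sphere, after which the statement reduces to the classical identification of the Euler number of $T\S^{d-\ell}$ with $\chi(\S^{d-\ell})$.

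More concretely, I would proceed as follows. First, fix a point $x \in X$ and an $(\ell-1)$-frame $(v_1, \dots, v_{\ell-1})$ in $\xi|_x$; the fiber of $pr_\ell$ over this point is the unit sphere $\S(W)$ in $W \coloneqq \langle v_1, \dots, v_{\ell-1}\rangle^\perp \subseteq \xi|_x$, a Euclidean space of dimension $d-\ell+1$. Thus $i_\ell\colon \S^{d-\ell} \cong \S(W) \hookrightarrow V_\ell(\xi)$ sends $v_\ell \in \S(W)$ to $(v_1, \dots, v_{\ell-1}, v_\ell)$.

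Next, I would identify $i_\ell^*(\xi_{d-\ell})$. By definition, the fiber of $\xi_{d-\ell}$ at $(v_1, \dots, v_\ell) \in V_\ell(\xi)$ is the orthogonal complement of $\langle v_1, \dots, v_\ell\rangle$ in $\xi$. Along the fiber $\S(W)$ the vectors $v_1, \dots, v_{\ell-1}$ are fixed, so this orthogonal complement becomes the orthogonal complement of $v_\ell$ inside $W$. But for the unit sphere $\S(W)$ in a Euclidean vector space $W$, the orthogonal complement of $v_\ell \in \S(W)$ in $W$ is canonically the tangent space $T_{v_\ell}\S(W)$. This identification assembles into a canonical bundle isomorphism
\[
    i_\ell^*(\xi_{d-\ell}) \;\cong\; T\S^{d-\ell}.
\]

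Finally, substituting into the definition of $\mathbf{e}^d_\ell$ gives
\[
    \mathbf{e}^d_\ell(i_\ell)
    = \langle e(i_\ell^*(\xi_{d-\ell})), [\S^{d-\ell}]\rangle
    = \langle e(T\S^{d-\ell}), [\S^{d-\ell}]\rangle
    = \chi(\S^{d-\ell}),
\]
where the last equality is the classical Poincaré--Hopf identification of the Euler number of the tangent bundle of a closed oriented manifold with its Euler characteristic. There is no real obstacle here; the only subtlety worth flagging is that $T\S^{d-\ell}$ is orientable, so the twisted coefficients in Definition~\ref{def:Euler} restrict to ordinary integer coefficients along the fiber, and the pairing makes sense as written.
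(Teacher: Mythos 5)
Your argument is correct and is essentially the paper's own proof: both identify the fiber of $pr_\ell$ with the unit sphere in $(v_1,\dots,v_{\ell-1})^\perp$ and observe that $i_\ell^*(\xi_{d-\ell})\cong T\S^{d-\ell}$, after which $\mathbf{e}^d_\ell(i_\ell)=\chi(\S^{d-\ell})$ follows from the standard Euler number computation. Your extra remark about orientability of $T\S^{d-\ell}$ and the twisted coefficients is a reasonable clarification but not a substantive difference.
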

\begin{proof}
    This follows since $i_\ell^*(\xi_{d-\ell})\cong T\S^{d-\ell}$. Indeed, the fiber of $\xi_{d-\ell}$ over $(x,v_1,\dots,v_\ell)\in V_\ell(\xi)$ is the orthogonal complement $(v_1,\dots,v_\ell)^\perp\subseteq \xi_x$, so if $v_\ell=u\in\S(U)$ for a fixed $U\coloneqq (v_1,\dots,v_{\ell-1})^\perp\subseteq\xi_x$ then the fiber is $u^\perp\subseteq U$, so exactly the tangent space to the $(d-\ell)$-sphere $\S(U)\subseteq U$ at $u$.
\end{proof}

By induction and the long exact sequence of homotopy groups for the fibration $pr_{\ell+1}\colon V_{\ell+1}(\xi)\to V_\ell(\xi)$ we get isomorphisms
\begin{equation}\label{eq:fr-below-critical}
    \begin{tikzcd}
        \pi_nV_\ell(\xi)\rar{\forg_{\ell}}[swap]{\cong} & 
        \pi_nX
   \end{tikzcd}
\end{equation}
for all $n\leq d-\ell-1$, and an exact sequence in the first interesting case
\begin{equation}\label{eq:pr-seq4}
    \begin{tikzcd}
        \pi_{d-\ell}V_\ell(\xi)\rar{\delta_{pr_{\ell+1}}} & 
        \pi_{d-\ell-1}\S^{d-\ell-1}\rar{i_{\ell+1}} &
        \pi_{d-\ell-1}V_{\ell+1}(\xi)\rar[two heads]{\forg_{\ell+1}} &
        \pi_{d-\ell-1}X.
    \end{tikzcd}
\end{equation}

\begin{lemma}\label{lem:Euler}
    For $0\leq \ell\leq d$ the connecting map $\delta_{pr_{\ell+1}}$ of $pr_{\ell+1}$ can be identified with the homomorphism $\mathbf{e}^d_\ell\colon\pi_{d-\ell}V_\ell(\xi)\to\Z\cong \pi_{d-\ell-1}\S^{d-\ell-1}$.
\end{lemma}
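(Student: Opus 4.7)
The plan is to use naturality of the connecting map together with the classical identification of the Euler class of a rank-$k$ bundle on $\S^k$ with the obstruction to trivializing its unit sphere bundle. The first observation is that $pr_{\ell+1}$ is itself a unit sphere bundle: the fiber of $pr_{\ell+1}$ over a frame $(x, v_1, \dots, v_\ell) \in V_\ell(\xi)$ is the unit sphere in $(v_1, \dots, v_\ell)^\perp \subseteq \xi|_x$, which by construction is the unit sphere in the fiber of $\xi_{d-\ell}$. So $pr_{\ell+1}$ is canonically identified with the unit sphere bundle $S(\xi_{d-\ell}) \to V_\ell(\xi)$.

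Now for $f \in \pi_{d-\ell}V_\ell(\xi)$ represented by $\tilde f\colon \S^{d-\ell} \to V_\ell(\xi)$, pulling back along $\tilde f$ produces the unit sphere bundle $p\colon S(\tilde f^*\xi_{d-\ell}) \to \S^{d-\ell}$ with the same fiber $\S^{d-\ell-1}$. By naturality of the connecting map (Corollary~\ref{cor:connecting map}),
\[
    \delta_{pr_{\ell+1}}(f) = \delta_p([\Id_{\S^{d-\ell}}]) \in \pi_{d-\ell-1}\S^{d-\ell-1}.
\]
Since $\S^{d-\ell}$ is simply connected (we may assume $d-\ell\geq 2$; the low-dimensional cases reduce to direct inspection), $\tilde f^*\xi_{d-\ell}$ is orientable with a canonical orientation, so $\mathbf{e}^d_\ell(f) = \langle e(\tilde f^*\xi_{d-\ell}), [\S^{d-\ell}]\rangle \in \Z$. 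Thus it suffices to show: for any oriented rank-$k$ vector bundle $\eta$ on $\S^k$, one has $\delta_{p_\eta}([\Id_{\S^k}]) = \langle e(\eta), [\S^k]\rangle$ under the identification $\pi_{k-1}\S^{k-1} \cong \Z$.

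I would establish this by clutching. Decompose $\S^k = D^k_+ \cup_{\S^{k-1}} D^k_-$; over each contractible hemisphere $\eta$ trivializes, and $\eta$ is reconstructed from a clutching map $g\colon \S^{k-1} \to \mathrm{GL}^+(k)$. By a standard computation, $\langle e(\eta),[\S^k]\rangle$ equals the degree of $\bar g\colon \S^{k-1} \to \S^{k-1}$, $x\mapsto g(x)\cdot v_0/|g(x)\cdot v_0|$, for any fixed $v_0 \in \S^{k-1}$. On the other hand, using the description of the connecting map from Appendix~\ref{app:Hurewicz}, I would compute $\delta_{p_\eta}([\Id_{\S^k}])$ by lifting the standard contraction of $D^k_+$ through $p_\eta$ via the constant section at $v_0$ in the $D^k_+$-trivialization; the lift read along the equator in the $D^k_-$-trivialization is exactly $\bar g$. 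The two expressions therefore agree in $\pi_{k-1}\S^{k-1}$.

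The main obstacle I anticipate is bookkeeping the orientation and sign conventions consistently across the two sides. However, both $\delta_{pr_{\ell+1}}$ and $\mathbf{e}^d_\ell$ are homomorphisms $\pi_{d-\ell}V_\ell(\xi) \to \Z$ natural in $(X,\xi)$, so any remaining sign ambiguity is resolved by a single universal test case: Lemma~\ref{lem:chi} computes $\mathbf{e}^d_\ell(i_\ell) = \chi(\S^{d-\ell})$ using $i_\ell^*\xi_{d-\ell} \cong T\S^{d-\ell}$, while on the other side the long exact sequence of the fibration $pr_\ell$ identifies $\delta_{pr_{\ell+1}}(i_\ell)$ with the class that measures the failure of extending $i_\ell$ across the fiber $\S^{d-\ell+1}$ of $pr_\ell$, which is $\chi(\S^{d-\ell})$ by the classical tangent sphere bundle computation. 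This agreement fixes conventions and completes the identification.
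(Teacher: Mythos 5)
Your argument is correct, and it reaches the identification by a route that differs from the paper's in its reduction step. The paper passes to the universal bundle $\upgamma\to BO_d$, identifies $V_\ell(\upgamma)\simeq BO_{d-\ell}$, and reads the connecting map off the fibration sequence $O_{d-\ell-1}\to O_{d-\ell}\xrightarrow{act}\S^{d-\ell-1}\to BO_{d-\ell-1}\to BO_{d-\ell}$, concluding because both $\pi_{d-\ell-1}(act)$ and the Euler class are the complete obstruction to a nonvanishing section of the rank-$(d-\ell)$ bundle over $\S^{d-\ell}$ classified by a clutching class in $\pi_{d-\ell-1}O_{d-\ell}$. You instead go the other direction: you observe that $pr_{\ell+1}$ is literally the unit sphere bundle $S(\xi_{d-\ell})\to V_\ell(\xi)$, pull back along a representative $\tilde f\colon\S^{d-\ell}\to V_\ell(\xi)$, invoke the naturality of connecting maps (Corollary~\ref{cor:connecting map}) to replace $\delta_{pr_{\ell+1}}(f)$ by the boundary map of a sphere bundle over $\S^{d-\ell}$ evaluated on $[\Id]$, and then do the clutching computation there; the sign is pinned by the test case $i_\ell^*\xi_{d-\ell}\cong T\S^{d-\ell}$ from Lemma~\ref{lem:chi}, which is legitimate since any remaining ambiguity is a universal sign and the odd case has both sides zero. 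The underlying classical input (Euler number $=$ obstruction to a section $=$ image of the clutching class under evaluation at $v_0$) is the same in both proofs; what your version buys is that it stays with the given $(X,\xi)$ and uses only the paper's own Appendix~\ref{app:Hurewicz}, avoiding the identification $V_\ell(\upgamma)\simeq BO_{d-\ell}$ and the action-map description of the connecting map, while the paper's universal reduction has the side benefit of setting up exactly the homogeneous-space picture reused later (e.g.\ in Corollaries~\ref{cor:Stiefel} and~\ref{cor:BO}). Two small points to tidy: the fiber of $pr_\ell$ is $\S^{d-\ell}$, not $\S^{d-\ell+1}$, and the phrase about ``extending $i_\ell$ across the fiber'' should be replaced by the statement that $\delta_{pr_{\ell+1}}(i_\ell)$ is the obstruction to lifting $i_\ell$ along $pr_{\ell+1}$, i.e.\ to a nonvanishing vector field on $\S^{d-\ell}$; also the ``canonical orientation'' of $\tilde f^*\xi_{d-\ell}$ deserves a word (the sign ambiguity here is of the same nature as the identification $\pi_{d-\ell-1}\S^{d-\ell-1}\cong\Z$, and is absorbed by the same convention-fixing test case), but this is at the same level of informality as the paper itself.
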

\begin{proof}
    Since the statement is inherited by pullbacks of vector bundles, it suffices to check it for $\xi=\upgamma$, the universal rank $d$ bundle $\upgamma\to BO_d$ over the Grassmannian $BO_d$ of $d$-planes in $\R^\infty$. Then $EO_d\coloneqq V_d(\upgamma)$ is a contractible, free $O_d$-space and we have for $0\leq \ell\leq d$ homotopy equivalences
\[
    V_\ell(\upgamma) \simeq \faktor{EO_d}{O_{d-\ell}} \simeq BO_{d-\ell}.
\]
    The map $pr_{\ell+1}$ is induced by the inclusion $O_{d-\ell-1}\hra O_{d-\ell}$
    % and the isomorphism $\pi_{d-\ell-1}V_\ell(\upgamma)\cong\pi_{d-\ell-2}O_{d-\ell-1}$ takes $f\colon\S^{d-\ell-1}\to V_\ell(\upgamma)$ to the clutching map for the vector bundle $f^*(\forg_\ell^*\upgamma)$.
    and we have a 5-term fibration sequence, where the connecting map in question is induced by the action map $act$ on a basepoint in $\S^{d-\ell-1}$:
    \begin{equation}\label{eq:Euler}
    \begin{tikzcd}
        O_{d-\ell-1} \rar & O_{d-\ell} \arrow{r}{act} & \S^{d-\ell-1} \rar & V_{\ell+1}(\upgamma)\arrow{r}{pr_{\ell+1}} & V_\ell(\upgamma).
    \end{tikzcd}
    \end{equation}
    Recall that the Euler class is the unique (necessary and sufficient) obstruction for finding a nonvanishing section in an rank $n$ vector bundle over $\S^n$, and $\pi_{d-\ell-1}O_{d-\ell}$ are isomorphism classes of such bundles for $n=d-\ell$ (via clutching). Then the claim follows since $\pi_{d-\ell-1}(act)$ is also the unique obstruction for such a section by the exact sequence above. 
\end{proof}
As a consequence, for $0\leq\ell\leq d$ we get an extension of groups
 \begin{equation}\label{eq:pr-seq}
    \begin{tikzcd}
        \faktor{\Z}{\im(\ed_\ell)} \rar[tail]{i_{\ell+1}} &
        \pi_{d-\ell-1}V_{\ell+1}(\xi)\rar[two heads]{\forg_{\ell+1}} &
        \pi_{d-\ell-1}X
    \end{tikzcd}
\end{equation}
and we determine the image of the homomorphism $\ed_\ell$ next.  

If $d-\ell$ is odd then $\mathbf{e}_\ell^d=0$, because it is given by Euler classes of bundles $f^*(\xi_{d-\ell})\to\S^{d-\ell}$ of rank $d-\ell$, for which fiberwise $-\Id$ is an orientation reversing vector bundle isomorphism over $\Id$ on the base. 
    
If $\ell=0$ and $d$ is even then $\im(\ed_\ell)=e(\xi)(\pi_dX)$ is simply the image of the Euler-class $e(\xi)$ evaluated on $\pi_d(X)$ under the Hurewicz map. Then any image is possible, for example, take $X=\S^2$ and $d=2$. Then $e(\xi)[S^2]\in \Z$ can be any integer $n$ by taking $\xi$ to be the $n$-fold (complex) tensor product of the Hopf bundle. As a consequence $\im(\ed_0)=e(\xi)(\pi_2S^2) = n\cdot \Z$. Note that for higher genus surfaces $X$ we have $\pi_2X=0$ and hence this image always vanishes. This example can be generalized as follows.

\begin{lemma}\label{lem:V_1}
Let $X$ be a compact $d$-manifold.
\begin{itemize}
\item 
    If $X$ has nonempty boundary then $\ed_0=0$ and the sequence \eqref{eq:pr-seq} splits for $\ell=0$. In fact, there are isomorphisms 
    $\pi_nV_1(\xi)\cong \pi_n\S^{d-1}\oplus \pi_nX$ for all $n\geq 1.$
\item
    If $X$ is closed then $e(TX)(\pi_dX)$ is nontrivial if and only if $d$ is even and the universal cover of $X$ is a rational homology $d$-sphere. More precisely, $X$ is either a simply connected $\Q$-homology $d$-sphere (orientable) or a $\Q$-homology ball (nonorientable) with fundamental group $\Z/2$.
\end{itemize}
\end{lemma}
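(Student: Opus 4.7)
For the first claim, the plan is to use that a compact $d$-manifold with nonempty boundary collapses onto a spine of dimension at most $d-1$, hence has cohomological dimension $\leq d-1$. In particular $H^d(X;\Z^w)=0$ for $w=w_1(\xi)$, so the twisted Euler class $e(\xi)$ vanishes; naturality under $f\colon\S^d\to X$ then gives $\mathbf{e}^d_0=0$. Since $e(\xi)$ is the sole primary obstruction to a nonvanishing section of the rank-$d$ bundle $\xi$ (by primary obstruction theory, using that the fibers $\S^{d-1}$ are $(d-2)$-connected), such a section $\sigma\colon X\to V_1(\xi)$ exists. The fibration $\S^{d-1}\to V_1(\xi)\to X$ therefore admits a section $\sigma$, splitting its long exact sequence into short split exact sequences $\pi_n\S^{d-1}\hookrightarrow\pi_nV_1(\xi)\twoheadrightarrow\pi_nX$ for all $n\geq 1$, which yields the claimed direct-sum decomposition.

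For the second claim, I would first reduce the computation to the universal cover. For $d\geq 2$ any $f\colon\S^d\to X$ factors as $f=p\circ\tilde f$ with $\tilde f\colon\S^d\to\tilde X$; note that $\tilde X$ is orientable since $\pi_1\tilde X=0$ forces $w_1(T\tilde X)=0$. Then $f^*(TX)\cong\tilde f^*(T\tilde X)$, so $\mathbf{e}^d_0(f)=\langle e(T\tilde X),\tilde f_*[\S^d]\rangle$. If $\tilde X$ is non-compact, $H_d(\tilde X;\Z)=0$ and this pairing vanishes; if $\tilde X$ is compact then $\tilde f_*[\S^d]=\deg(\tilde f)\cdot[\tilde X]$, giving $\mathbf{e}^d_0(f)=\deg(\tilde f)\cdot\chi(\tilde X)$. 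So the image is nontrivial if and only if $d$ is even (forcing $\chi(\tilde X)\neq 0$, since odd-dimensional closed manifolds have vanishing Euler characteristic) and the Hurewicz map $\pi_d\tilde X\to H_d(\tilde X;\Z)\cong\Z$ is nonzero.

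The technical heart of the argument is the claim that for a simply connected closed $d$-manifold $\tilde X$, this Hurewicz map is nonzero precisely when $\tilde X$ is a rational homology $d$-sphere. The ``if'' direction is standard: such a $\tilde X$ is rationally equivalent to $\S^d$, so admits a map from $\S^d$ of nonzero degree. For the ``only if'' direction I would use a cup-product obstruction: if $H^*(\tilde X;\Q)$ contains a nonzero class in some degree $0<k<d$, then by Poincar\'e duality the top class $[\tilde X]^*\in H^d(\tilde X;\Q)$ is a cup product $\alpha\cup\beta$ with $|\alpha|,|\beta|\in(0,d)$; but $\tilde f^*\alpha\in H^{|\alpha|}(\S^d;\Q)=0$ for any $\tilde f\colon\S^d\to\tilde X$, forcing $\deg(\tilde f)=\tilde f^*[\tilde X]^*=0$ for every $\tilde f$.

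To finish, I would classify such $X$. Multiplicativity of Euler characteristic under finite covers gives $\chi(X)=\chi(\tilde X)/|\pi_1X|=2/|\pi_1X|$, so $|\pi_1X|\in\{1,2\}$. If $|\pi_1X|=1$, then $X=\tilde X$ is an orientable simply connected $\Q$-homology $d$-sphere. If $\pi_1X=\Z/2$, the transfer isomorphism $H_d(X;\Q)\cong H_d(\tilde X;\Q)^{\Z/2}$ combined with $\chi(X)=1$ forces the nontrivial deck transformation to act by $-1$ on $H_d(\tilde X;\Q)\cong\Q$: otherwise the invariants would give $H_d(X;\Q)=\Q$ and $\chi(X)=2$, a contradiction. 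Hence $X$ is non-orientable with $H_d(X;\Q)=0$, making it a $\Q$-homology ball. The main subtle step is the Hurewicz/cup-product dichotomy; the rest is a chain of standard reductions.
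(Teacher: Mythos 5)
Your proof is correct and follows essentially the same route as the paper: for the boundary case, vanishing of $H^d(X;\Z^w)$ kills the twisted Euler class, a section of $V_1(\xi)\to X$ exists, and the long exact sequence splits; for the closed case, you lift to the universal cover, use that a nonzero-degree map $\S^d\to\wt X$ forces $\wt X$ to be a $\Q$-homology sphere via Poincar\'e duality and triviality of cup products on $\S^d$, and then classify $X$ via multiplicativity of the Euler characteristic. Your extra details (the formula $\mathbf{e}^d_0(f)=\deg(\tilde f)\cdot\chi(\wt X)$ and the transfer argument identifying the $\Z/2$-quotient as a $\Q$-homology ball) just make explicit what the paper leaves terse, so there is nothing substantive to add.
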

\begin{proof}
    If $X$ has nonempty boundary then $H^d(X;\Z^w)=0$ and hence $e(\xi)=0$. This means that we have a continuous section of $V_1(\xi)\to X$. As a consequence, the long exact sequence of the fibration turns into (split) short exact sequences on homotopy groups.
    
    For closed $X$ we consider the case $\xi=TX$. We may assume that $d$ is even, otherwise $\ed_0 =0$. If $\Sigma$ is a simply connected $\Q$-homology sphere then the degree 1 map $\Sigma\to \S^d$ is a rational homotopy equivalence and hence $\pi_d(\Sigma)\otimes\Q \cong\Q$. It follows that
    there is a map $\S^d\to \Sigma$ of nonzero degree and so  $e(T\Sigma)(\pi_d\Sigma)\neq 0$ since the Euler characteristic of $\Sigma$ is 2.  If $X$ is covered by $\Sigma$, it therefore also satisfies $e(TX)(\pi_dX)\neq 0$.
    
    Conversely, if this group is nontrivial then there is a map $\S^d\to X$ of nonzero degree. Let $\Sigma$ be the universal cover of $X$. Then this map lifts to a map $\S^d\to \Sigma$ of nonzero degree (which implies that this is a finite cover). Then $\Sigma$ must be a $\Q$-homology sphere, otherwise there would be a nontrivial rational cup product in its top cohomology which contradicts that all cup products are trivial for $\S^d$. This also implies that the only free action on $\Sigma$ can be by $\Z/2$ and that the quotient is a $\Q$-homology ball.
\end{proof}

Finally, we consider the case $d-\ell$ even with $l\geq1$.
We define the \emph{spherical Stiefel--Whitney class}
\[
    w^s_i(\xi)\colon\pi_iX\to \Z/2
\]
as the evaluation of the $i$-th Stiefel--Whitney class $w_i(\xi)\in H^i(X;\Z/2)$ on spherical classes, namely those in the image of the Hurewicz homomorphism $h\colon\pi_iX\to H_i(X;\Z)$.
\begin{prop}\label{prop:Euler}
    For $X,\xi$ and $1\leq \ell\leq d=\dim(\xi)$ as above, assume that $d-\ell$ is even.     
    Then $\mathbf{e}^d_\ell$ is either onto or has image $2\cdot\Z$. The latter happens if and only if the spherical Stiefel--Whitney class $w^s_{d-\ell}(\xi)\colon \pi_{d-\ell}X\to \Z/2$ vanishes.
    This homomorphism can be nontrivial only for $d-\ell=2$, $4$, or $8$.
\end{prop}
\begin{proof}
    If $d-\ell$ is even and $l\geq1$, then $\mathbf{e}_\ell^d(i_\ell)=\chi(\S^{d-\ell})=2$ by Lemma~\ref{lem:chi}, so $2\cdot\Z$ is contained in the image of $\mathbf{e}^d_\ell$. To decide whether this is the entire image, we only need to understand the Euler class modulo~2. But then it equals the Stiefel--Whitney class $w_{d-\ell}$ which is a \emph{stable} characteristic class and hence we can evaluate it on $\xi$ rather then $\xi_{d-\ell}$. This implies our claim regarding the spherical Stiefel--Whitney class.

    Recall that Adams' solution of the Hopf invariant 1 problem implies that rank $n$ vector bundles over $\S^n$ have even Euler class unless $n=2,4,8$. The universal complex, quaternion and octonian line bundles over the corresponding projective spaces have Euler number $1$, so for $d-\ell=2,4,8$ both cases can arise. 
\end{proof}
For $X=*$, the frame bundle $V_\ell(\xi)$ is just the Stiefel manifold $V_\ell(d)$, whereas for $X=BO_d$ and the universal rank $d$ bundle $\xi\coloneqq\upgamma$ over it we have $V_\ell(\upgamma)\simeq BO_{d-\ell}$ (see the proof of Lemma~\ref{lem:Euler}). Therefore, in these cases we get back the following results, the first of which is due to Stiefel~\cite{Stiefel}, whereas the second can be found in Kervaire's paper~\cite{Kervaire}.

\begin{cor}\label{cor:Stiefel}
    If $1\leq k\leq d$, the groups $\pi_nV_k(d)$ vanish for $n\leq d-k-1$ and there are isomorphisms
\[
    \pi_{d-k}V_k(d)\cong\Z_{k,d}\coloneqq\begin{cases}
        \Z&k=1\text{ or }d-k\text{ even},\\
        \Z/2&\text{otherwise},
    \end{cases}
\]
    generated by the fiber inclusion $i_k\colon\S^{d-k}\to V_k(d)$.
\end{cor}
\begin{proof}
    For $X=*$ we have $\pi_nV_k(d)=0$ for $n\leq d-k-1$ by \eqref{eq:fr-below-critical}, and $\pi_{d-k}V_k(d)\cong\ker\forg_k\cong\Z/\im(\mathbf{e}^d_{k-1})$ by \eqref{eq:pr-seq} for $l+1=k$. We saw after \eqref{eq:pr-seq} that for $d-k$ even this is infinite cyclic, as well as for $k=1$ since $\pi_dX=0$. For $d-k$ odd we trivially have $w^s_{d-\ell}(\xi)=0$, so Proposition~\ref{prop:Euler} implies $\im(\mathbf{e}^d_{k-1})=2\Z$.
\end{proof}

\begin{cor}\label{cor:BO}
    For $m\geq1$ there are isomorphisms $\pi_nBO_{m-1}\cong\pi_nBO_m$ for $n\leq m-2$, and a short exact sequence
\[\begin{tikzcd}[ampersand replacement=\&]
\begin{drcases}
    m\text{ odd}           & \Z\\
    m\neq2,4,8\text{ even} & \Z/2\\
    m=2,4,8                & 0
\end{drcases}
\rar[tail] \& \pi_{m-1}BO_{m-1}\rar[two heads] \&
\begin{cases}
    \Z & m\equiv1,5\pmod8,\\
    \Z/2 & m\equiv2,3\pmod8,\\
    0 & m\equiv0,4,6,7\pmod8.
\end{cases}
\end{tikzcd}
\]
\end{cor}
\begin{proof}
    The kernel was computed in Proposition~\ref{prop:Euler} for $m\coloneqq(d-\ell)$. Just recall that $\pi_mV_\ell(\upgamma)=\pi_mBO_m$ consists of isomorphism classes of all rank $m$ vector bundles over $\S^m$, including those with Euler number 1. 
    To compute the right hand side, note that by \eqref{eq:pr-seq} the group $\pi_{m-1}BO_m \cong \pi_{m-1}BO$ is in the stable range, so the result follows from  Bott periodicity.
\end{proof}

\subsection{Splittings for the first interesting homotopy groups}
For $X,\xi$ as above, let $k\coloneqq\ell+1$ with $2 \leq  k \leq d$. If $d-k$ is even (i.e.\ $d-\ell$ odd) there is a group extension
\begin{equation}\label{eq:pr-seqZ}
    \begin{tikzcd}
        \Z\rar[tail]{i_k} &
        \pi_{d-k}V_k(\xi)\rar[two heads]{\forg_k} &
        \pi_{d-k}X
    \end{tikzcd}
\end{equation}
as we showed below \eqref{eq:pr-seq}. On the other hand, when $d-k$ is odd, $k\geq2$, $d-k\neq1,3,7$, we know from Proposition~\ref{prop:Euler} that our extension is by $\Z/2$:
\begin{equation}\label{eq:pr-seqZ2}
    \begin{tikzcd}
        \Z/2\rar[tail]{i_k} &
        \pi_{d-k}V_k(\xi)\rar[two heads]{\forg_k} &
        \pi_{d-k}X.
    \end{tikzcd}
\end{equation}
We now ask in which circumstances these extensions split and how to construct such splittings.
\begin{prop}\label{prop:splittingsZ}
    If $2\leq k\leq d=\dim(\xi)$ with $d-k$ even, splittings $\eta\colon\pi_{d-k}V_k(\xi)\to\Z$ of the extension~\eqref{eq:pr-seqZ} are in bijection with integer lifts $W\colon\pi_{d-k}X\to\Z$ of the spherical Stiefel--Whitney class
    % $w^s_{d-k}(\xi)\colon\pi_{d-k}X\to\Z/2$, via
\[
    W\ \longleftrightarrow \ \eta_W\coloneqq\frac{1}{2}(\ed_k + W\circ\forg_k).
\]
    Moreover, $w^s_{d-k}(\xi)$ can be nontrivial only for $d-k=2$, $4$, or $8$, so apart from these cases $W=0$ gives the preferred splitting $\eta_0 = \frac{1}{2}\ed_k$.
\end{prop}
\begin{proof}
    To show that the claimed formula for $\eta_W$ is a splitting, first note that the division by 2 makes sense for any integer lift $W$ of $w^s_{d-k}(\xi)$. Both summands are homomorphisms and $\eta_W$ splits the inclusion $i_k$ because
    $\ed_k(i_k)=2$ and $\forg_k(i_k) = 0$.
    Conversely, if $\eta$ is any splitting then $(2\eta - \ed_k)(i_k)=2-2=0$, so $2\eta-\ed_k$ factors through a homomorphism $W_\eta\colon\pi_{d-k}X\to\Z$. This is an integer lift of $w^s_{d-k}(\xi)$.
    
    Moreover, two splittings $\eta$ differ by a unique homomorphism $u\colon\pi_{d-k}X\to \Z$ (composed with $\forg_k$). Given one integer lift $W$ of $w^s_{d-k}(\xi)$, we have a second lift $W+2u$, leading to another splitting $\eta_{W+2u} = \eta_W+u\circ\forg_k$.
    This shows the claimed 1-1 correspondence. The last claim follows from the solution of the Hopf invariant~1 problem as in Proposition~\ref{prop:Euler}.
\end{proof}

\begin{prop}\label{prop:splittingsZ2}
    If $2\leq  k \leq d=\dim(\xi)$ with $d-k\neq1,3,7$ odd, then there is a splitting $\eta\colon \pi_{d-k}V_k(\xi)\to\Z/2$ of \eqref{eq:pr-seqZ2} that only depends on the isomorphism class of the vector bundle $\xi_{d-k}$.
\end{prop}
\begin{proof}
    Note that for $d-k$ odd Euler numbers $\mathbf{e}^d_k=0$ are trivial, so the method of the preceding proof does not construct splittings. However, Corollary~\ref{cor:BO} gives for $d-k\neq1,3,7$ odd a short exact sequence
\[\begin{tikzcd}[ampersand replacement=\&]
    \Z/2\rar[tail] \& \pi_{d-k}BO_{d-k}\rar[two heads] \& \pi_{d-k}BO=
    \begin{cases}
    \Z/2 & d-k\equiv1\pmod8,\\
    0 & d-k\not\equiv1\pmod8.
    \end{cases}
\end{tikzcd}
\]
    Thus, in the case $d-k\not\equiv1\pmod8$, returning to $X$ and comparing to the universal bundle, we see that a splitting $\eta$ arises by the formula $\eta(f)\coloneqq f^*(\xi_{d-k})\in \pi_{d-k}BO_{d-k} \cong \Z/2$.
    On the other hand, if $d-k\equiv 1\pmod 8$ the displayed sequence splits, $\pi_{d-k}BO_{d-k}\cong\Z/2 \times \Z/2$, by Kervaire \cite{Kervaire} (this is the entry $r=-1$ and $m=8s+1=d-k$ in his second table, with $s\geq 1$, which is also our case since $d-k\neq1$). In particular, we get our splitting that again only depends on the isomorphism class of $\xi_{d-k}\to V_k(X)$ (but in these cases we do not know an explicit formula).
\end{proof}

%%%%%%
\subsection{Tangent bundles}

We now improve the above results in the case $\xi=TX$ is the tangent bundle of a manifold. For $d-k$ odd this follows from \eqref{eq:pr-seqZ2}, Proposition~\ref{prop:splittingsZ2} and Lemma~\ref{lem:TX-odd} below. For $d-k$ even this follows from \eqref{eq:pr-seqZ}, Proposition~\ref{prop:splittingsZ} and Lemma~\ref{lem:TX-even}. In Lemma~\ref{lem:V_1} we looked at $k=1$.
\begin{prop}\label{prop:TX}
    For a compact $d$-manifold $X$ let $V_k(X)\coloneqq V_k(TX)$ be its $k$-frame bundle. For any $2\leq k\leq d$ except possibly for $k=d-1\geq 3$ or $k=d-3\geq 5$ or $k=d-7\geq 9$, there is an extension
\[\begin{tikzcd}[ampersand replacement=\&]
    \begin{drcases}
        %k=1, & \faktor{\Z}{e(TX)(\pi_dX)} \\
       d-k\text{ even}, & \Z\\
    d-k\text{ odd}, &\Z/2
    \end{drcases}
    \eqqcolon
    \Z_{k,d}
    \rar[tail]{i_k} \& \pi_{d-k}V_k(X)\rar[two heads]{\forg_k} \&\pi_{d-k}X.
\end{tikzcd}
\]
    These extensions split and for $d-k$ even splittings are given by
    \[
    \eta_W=\frac{1}{2}(\ed_k+W\circ\forg_k),
    \]
    where one can take $W=0$ unless $d-k=2,4$ or $8$ and $d\leq 2k$. If $d-k=2,4$ or $8$ and $d=2k$ an integral lift of $w_{d-k}^s(TX)$ exists and gives a desired $W$.
\end{prop}

\begin{lemma}\label{lem:TX-odd} 
    Assume $k\geq2$ and $d-k$ is odd. The homomorphism $\ed_{k-1}(TX)$ has image $2\cdot\Z$ unless $k=d-1\geq 3$ or $k=d-3\geq 5$ or $k=d-7\geq 9$.
    % Therefore, except in these cases, the split extension \eqref{eq:pr-seqZ2} holds for $\pi_{d-k}V_k(X)$.
\end{lemma}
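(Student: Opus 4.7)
The plan is to combine Proposition~\ref{prop:Euler}, applied to $\xi=TX$ and $\ell=k-1$, with a direct Smale--Hirsch immersion argument. By that proposition, $\ed_{k-1}(TX)$ has image $2\Z$ precisely when the spherical Stiefel--Whitney class $w^s_{d-k+1}(TX)\colon\pi_{d-k+1}X\to\Z/2$ vanishes, and moreover this obstruction can be nontrivial only when $d-k+1\in\{2,4,8\}$, equivalently $d-k\in\{1,3,7\}$. In all other cases the conclusion is free, so we reduce to showing that $w^s_n(TX)=0$ in the three remaining values $n\coloneqq d-k+1\in\{2,4,8\}$, under the non-excluded hypotheses of the lemma.

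Given a class $f\colon S^n\to X$, I would push it into the interior of $X$ and then apply Smale--Hirsch to homotope $f$ to an immersion; this is available since $k\geq 2$ gives $n=d-k+1\leq d-1$, and the stable tangent bundle of $S^n$ is trivial, so the bundle-monomorphism obstructions vanish automatically. The resulting immersion admits a splitting $f^*TX\cong TS^n\oplus\nu(f)$ of vector bundles over $S^n$, where $\nu(f)$ has rank $d-n=k-1$. Because the stable normal bundle of $S^n$ is trivial and $w_n(TS^n)=\chi(S^n)\bmod 2=0$ for every $n$, the total Stiefel--Whitney class satisfies $w(TS^n)=1$, hence $w_n(f^*TX)=w_n(\nu(f))$. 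Since Stiefel--Whitney classes above the rank vanish, this is $0$ whenever $k-1<n=d-k+1$, i.e., whenever $d\geq 2k-1$.

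The final step is the bookkeeping that verifies the excluded list of the lemma is exactly the complement of the condition $d\geq 2k-1$ among the cases $d-k\in\{1,3,7\}$ with $k\geq 2$. For $d-k=1$ one has $d\geq 2k-1\iff k\leq 2$, matching the excluded range $k\geq 3$; for $d-k=3$ one has $d\geq 2k-1\iff k\leq 4$, matching $k\geq 5$; for $d-k=7$ one has $d\geq 2k-1\iff k\leq 8$, matching $k\geq 9$. So in every case not excluded, the immersion argument yields $w^s_{d-k+1}(TX)=0$, and Proposition~\ref{prop:Euler} concludes that $\im\ed_{k-1}(TX)=2\Z$. The main obstacle is simply to recognize that the numerical thresholds in the exclusion list are exactly the failure of the rank inequality $\rk\nu(f)<n$; beyond that the argument is a clean two-step reduction.
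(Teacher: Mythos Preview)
Your reduction via Proposition~\ref{prop:Euler} and the bookkeeping at the end are both correct, and you land on exactly the right threshold $d\geq 2k-1$. The gap is the immersion step. Stable triviality of $TS^n$ does \emph{not} guarantee that an arbitrary $f\colon S^n\to X^d$ with $n<d$ is homotopic to an immersion: by Hirsch one needs a bundle monomorphism $TS^n\hookrightarrow f^*TX$, and this is an unstable problem. Concretely, in codimension one ($k=2$, so $d=n+1$; the cases $d\in\{3,5,9\}$ are \emph{not} in the excluded list, so your argument must handle them) the complementary line bundle over $S^n$ is forced to be trivial, hence a monomorphism exists iff $f^*TX\cong TS^n\oplus\epsilon\cong\epsilon^{n+1}$ is the trivial bundle. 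For $n=2$ triviality of a rank-$3$ bundle over $S^2$ is detected exactly by $w_2$, so your argument is circular there; for $n=4$ or $8$ the obstruction to triviality lies in $\pi_{n-1}SO(n+1)\cong\Z$ and is strictly finer than $w_n$, so you would have to establish more than the lemma claims just to produce the immersion.

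The paper instead works intrinsically with the tangent bundle via the Wu formula $w_{d-\ell}(TX)=\sum_{i+j=d-\ell}Sq^i(v_j)$ (with $\ell=k-1$). Pulled back along any map from a sphere only the term $v_{d-\ell}$ can survive, and the defining relation $\langle v_{d-\ell}\cup a,[X]\rangle=\langle Sq^{d-\ell}a,[X]\rangle$ for $a\in H^\ell(X,\partial X;\Z/2)$, together with the vanishing of $Sq^{d-\ell}$ on classes of degree $\ell<d-\ell$, forces $v_{d-\ell}=0$ whenever $d>2\ell$. This gives $w^s_{d-\ell}(TX)=0$ under the same numerical condition $d\geq 2k-1$, without ever needing an immersion.
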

\begin{proof}
    By Proposition~\ref{prop:Euler} the image of $\ed_\ell(\xi)$ is $2\cdot\Z$ if and only if the spherical Stiefel--Whitney class $w^s_{d-\ell}(\xi)$ vanishes, which is true unless $d-\ell=2,4,8$. Thus, we just need to show that for $\xi=TX$ we additionally must have $d\leq2\ell$ with $\ell=k-1$ for $w^s_{d-\ell}(TX)$ not to vanish (since then $d-k=1,3$ or $7$, and $d\leq 2k-2$ gives $k\geq d-k+2=3,5$ or $9$ respectively).

    If $v_j\in H^j(X;\Z/2)$ are the Wu classes of $X$ and $Sq^i$ denotes Steenrod squares, the Wu formula says
\[
    w_{d-\ell}(TX) = \sum_{i+j=d-\ell} Sq^i(v_j).
\]
    Since the cohomology of $\S^{d-\ell}$ is concentrated in two dimensions, pulling back to it kills all these summands, except possibly $Sq^0(v_{d-\ell})=v_{d-\ell}$.
    For any $a\in H^\ell(X,\partial X;\Z/2)$, by the defining property 
    % of Wu classes we have
\[
    \langle v_{d-\ell}\cup a, [X] \rangle = \langle Sq^{d-\ell}(a), [X] \rangle.
\]
    This vanishes on compact $d$-manifolds $X$ such that $d>2\ell$, since $Sq^i$ is zero on cohomology classes in degrees $<i$. So $w_{d-\ell}(TX)$ vanishes on spherical classes unless $d\leq2\ell$.
\end{proof}

\begin{lemma}\label{lem:TX-even} 
    Assume $k\geq2$ and $d-k$ is even.
    The extension \eqref{eq:pr-seqZ} for $\pi_{d-k}V_k(X)$ splits also in the cases $d-k=2,4$ or $8$ if $d\geq2k$. 
\end{lemma}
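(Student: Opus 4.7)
By Proposition~\ref{prop:splittingsZ}, the task reduces to producing a group homomorphism $W\colon\pi_{d-k}X\to\Z$ whose mod-$2$ reduction equals $w^s_{d-k}(TX)$. The constraint $d\geq 2k$ with $d-k\in\{2,4,8\}$ enumerates into three families: $d-k=2$ forces $d=4,\,k=2$; $d-k=4$ allows $k\in\{2,3,4\}$; and $d-k=8$ allows $k\in\{2,\dots,8\}$. I would treat the stable cases $d-k\in\{4,8\}$ uniformly, and the unstable case $d-k=2$ separately.

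For $d-k\in\{4,8\}$, I propose taking $W$ to be the map on $\pi_{d-k}$ induced by the classifying map $X\to BO$ of the stable tangent bundle, landing in $\pi_{d-k}BO\cong\widetilde{KO}(\S^{d-k})\cong\Z$ by Bott periodicity; this is automatically a group homomorphism sending $f$ to the stable class $[f^*TX-d]$. The universal Stiefel--Whitney class $w_{d-k}\colon\widetilde{KO}(\S^{d-k})=\Z\to\Z/2$ is either zero or the mod-$2$ reduction; to identify which, test it on the real rank-$(d{-}k)$ form of the quaternionic Hopf bundle $\gamma_{\mathbb{H}}\to\S^4$ (for $d-k=4$) or the octonionic Hopf bundle $\gamma_{\mathbb{O}}\to\S^8$ (for $d-k=8$). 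Each is oriented with Euler number $1$, so $w_{d-k}(\gamma)=e(\gamma)\bmod 2=1$, showing that $w_{d-k}$ is the mod-$2$ reduction. Hence $W(f)\bmod 2=w_{d-k}(f^*TX)=w^s_{d-k}(TX)(f)$ by stability of $w_{d-k}$.

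For the remaining case $d-k=2$ (so $d=4,\,k=2$), the stable $KO$-approach fails since $\widetilde{KO}(\S^2)=\Z/2$ is itself torsion. Instead I would exploit the paper's standing orientability convention: by Hirzebruch--Hopf every closed oriented $4$-manifold is Spin$^c$, and the compact-with-boundary case follows by restricting a Spin$^c$-structure from the double $DX=X\cup_{\partial X}\overline{X}$. This furnishes an integral class $c\in H^2(X;\Z)$ with $c\equiv w_2(TX)\pmod 2$. Define $W$ as the composition
\[
    \pi_2 X\xrightarrow{\,h\,} H_2(X;\Z)\xrightarrow{\,\langle c,\,\cdot\,\rangle\,}\Z,
\]
which is a homomorphism, and by naturality of the Kronecker pairing $W(f)\bmod 2=\langle w_2(TX),h(f)\rangle=w^s_2(TX)(f)$.

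The main hurdle is the $d-k=2$ case, where the stable $KO$-theoretic construction collapses and one has to import the Spin$^c$-existence of oriented $4$-manifolds. As a consistency check, since $c\colon H_2(X;\Z)\to\Z$ factors through the free quotient and thus kills torsion, the construction also forces $w^s_2(TX)$ to vanish on any $2$-torsion in the Hurewicz image of $\pi_2 X$---a compatibility that is automatic precisely because a Spin$^c$-lift of $w_2(TX)$ exists.
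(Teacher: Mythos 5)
Your argument is correct for the cases it treats, but it takes a genuinely different route from the paper's. The paper first notes (via the Wu-formula computation already made in the proof of Lemma~\ref{lem:TX-odd}) that $w^s_{d-k}(TX)=0$ whenever $d>2k$, so $W=0$ works there, and then handles the single remaining middle-dimensional case $d=2k$ with $k=2,4,8$ by one geometric argument: formula~\eqref{eq:lambda-mu-formula} gives $w^s_k(TX)(a)\equiv\langle a,a\rangle_X\pmod 2$, so $w^s_k(TX)$ factors through the finitely generated free abelian group $\im(h)/\mathrm{torsion}$ and therefore admits an integer lift. You instead split by the value of $d-k$: for $d-k=4,8$ you lift through $\pi_{d-k}BO\cong\widetilde{KO}(\S^{d-k})\cong\Z$, checking on the quaternionic and octonionic Hopf bundles (Euler number $1$, and $w_{d-k}$ is additive over a sphere since the intermediate cohomology vanishes) that the universal $w_{d-k}$ is reduction mod $2$; this is clean, avoids the Wu formula entirely, and in fact establishes these two families for every $k\geq 2$, without the hypothesis $d\geq 2k$ and without any orientability assumption. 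For $d-k=2$ (forcing $d=4$, $k=2$), where $\widetilde{KO}(\S^2)\cong\Z/2$ indeed blocks the stable argument, you invoke the $\mathrm{Spin}^c$ property of oriented $4$-manifolds (doubling to treat the boundary case), which produces an integral lift of $w_2(TX)$ as a cohomology class --- more than is needed, since only a lift of the spherical class on $\pi_2X$ is required.

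One caveat: the appendix explicitly does \emph{not} assume $X$ (or $\xi$) orientable --- the paper's proof accommodates this by using the $w_1(TX)$-twisted intersection form in \eqref{eq:lambda-mu-formula} --- whereas your $d=4$, $k=2$ step genuinely needs orientability (Hirzebruch--Hopf is a statement about oriented $4$-manifolds, and the ``standing convention'' you appeal to is suspended in the appendix). So as written your argument proves the lemma in that case only for orientable $X$; this suffices for the paper's applications, where the relevant manifold is oriented, but to recover the stated generality you would have to replace the $\mathrm{Spin}^c$ step, for instance by the paper's intersection-form argument, which needs no orientability.
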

\begin{proof}
    The previous proof shows that $w^s_{d-k}(TX)$ vanishes unless $d\leq2k$, so $\eta_W$ with $W=0$ is a splitting unless $d\leq 2k$. Thus, it remains to show that a lift $W$ of $w^s_{d-k}(TX)$ also exists in the middle-dimensional setting $d=2k$ with $k=d-k=2,4$ or $8$. The following argument actually works for any $k$.

    Consider the \emph{twisted intersection form} of $X$, a bilinear pairing $\langle-,-\rangle_X\colon H_k(X;\Z)\times H_k(X;\Z^w)\to \Z$, where the second term denotes coefficients twisted by $w\coloneqq w_1(TX)$. The pairing is given algebraically by Poincar\'e duality and cup products and geometrically by counting transverse intersection points carefully with signs. 
    
    There are Hurewicz homomorphisms $h\colon\pi_kX\to H_k(X;\Z)$ and their twisted partners
\[
    h^w\colon\pi_kX\cong\pi_k\wt X \overset{\wt h}{\ra} H_k(\wt X;\Z) \cong H_k(X;\Z[\pi_1X])\overset{\e^w_*}{\ra} H_k(X;\Z^w),
\]
    where $\e^w\colon \Z[\pi_1X]\to \Z^w$ is the $\pi_1X$-linear map determined by $\e^w(g)\coloneqq w(g)\in\{\pm 1\}$ for all $g\in\pi_1X$. 
    In this middle dimension, any class $a\in \pi_kX$ is represented by a generic immersion $A\colon\S^k\imra X$ and 
\begin{equation}\label{eq:lambda-mu-formula}
   \langle h(a), h^w(a) \rangle_X \eqqcolon \langle a,a\rangle_X = 2\langle A\rangle_X + e(\nu_A),
\end{equation}
    where $\langle A\rangle_X$ is the self-intersection number (the sum of signed double points of A) and $e(\nu_A)$ is the Euler number of the normal bundle $\nu_A\to\S^k$. The formula is proven by intersecting $A$ with a transverse push-off $A^\uparrow$ of $A$, and noting that intersection points in $A\pitchfork A^\uparrow$ are either those where the normal vector vanishes, contributing to $e(\nu_A)$, or occur as a pair of points in a neighborhood of a self-intersection of $A$.

    Using $a^*(TX)\cong T\S^k\oplus\nu_A$ we get
\begin{equation}\label{eq:w-as-inter-form}
    w^s_k(TX)(a) = w_k(a^*(TX)) =w_k(T\S^k)+w_k(\nu_A)  \equiv e(\nu_A) \equiv \langle a, a \rangle_X \pmod 2.
\end{equation}
    By definition, $w^s_k(TX)$ factors through the quotient $\pi_kX/\ker(h)$ and the twisted intersection form can also be restricted to a bilinear pairing on $\pi_kX/\ker(h) \times H_k(X;\Z^w)$. Since it takes values in the torsion-free group $\Z$, the formula~\eqref{eq:w-as-inter-form} implies that $w^s_k(TX)$ actually factors even further, namely through $(\pi_kX/\ker(h))/$torsion. By compactness of $X$ we know that $H_k(X;\Z)$ is finitely generated and so is its subgroup $\im(h)\cong \pi_kX/\ker(h)$. Therefore, we have shown that $w^s_k(TX)$ factors through a finitely generated torsion-free abelian group. As such groups are free, the required integer lift $W$ can be constructed by defining it on the free generators of this group. 
\end{proof}

We note that the 5-manifold $X=SU(3)/SO(3)$ is simply connected, has $\pi_2X\cong\Z/2$ with nontrivial $w^s_2(TX)$. So there is no integer lift $W$ and Lemmas~\ref{lem:TX-odd} and~\ref{lem:TX-even} indeed fail for $d=5,k=4$ and $d=5,k=3$ (so $d<2k$). As a consequence, our assumptions are the best possible for $d=5$. 

% Finally, let us consider $k=1$. If $X$ is a closed, orientable surface of genus $g>0$, the extension $\Z\hra\pi_1V_1(X)\sra\pi_1X$ is classified by $e(TX)=2-2g \in \Z\cong H^2(\pi_1X;\Z)$. Therefore, it splits if and only if $g=1$ (and obviously also for $X=S^2$). However, for the case of manifolds with nonempty boundary, relevant to us, we have the following.

% Finally, since the tangent bundle of a compact, orientable 3-manifold is trivial by the Wu formula, so are the frame bundles $V_2(X)\to X$ and the induced map on $\pi_1$ of a section of this bundle gives a splitting $\eta$ for $(\ell,d)=(1,3)$. 

\phantomsection
\printbibliography[heading=bibintoc]

\vspace{5pt}
\hrule

\end{document}